\newtheorem*{corollary*}{Corollary}
\newtheorem*{question*}{Question}
\definecolor{lightviolet}{RGB}{197,180,227}
\DeclareMathOperator{\h}{\mathcal{H}}
\DeclareMathOperator{\R}{\mathbb{R}}
\DeclareMathOperator{\Z}{\mathbb{Z}}
\DeclareMathOperator{\Ver}{V}
\DeclareMathOperator{\Vol}{\rm Vol}
\DeclareMathOperator{\Edg}{E}
\DeclareMathOperator{\Ext}{{\rm Ext}}
\DeclareMathOperator{\ext}{{\rm ext}}
\newtheorem{maintheorem}{Theorem}
\newtheorem{theorem}{Theorem}[section]
\newtheorem{lemma}[theorem]{Lemma}
\newtheorem{proposition}[theorem]{Proposition}
\newtheorem{definition-theorem}[theorem]{Definition/Theorem}
\newtheorem{corollary}[theorem]{Corollary}
\newtheorem{conjecture}[theorem]{Conjecture}
\theoremstyle{definition}
\newtheorem{example}[theorem]{Example}
\newtheorem{definition}[theorem]{Definition}
\newtheorem{remark}[theorem]{Remark}
\DeclareMathOperator{\conv}{ConvHull}
\title{Trapezodial property of the generalized Alexander polynomial}\date{}
 \author{Tam\'as K\'alm\'an}
\address{Tam\'as K\'alm\'an, Department of Mathematics,
Institute of Science Tokyo, Japan.
\newline\textup{kalman@math.titech.ac.jp}
}
\author{Karola M\'esz\'aros}
\address{Karola M\'esz\'aros, Department of Mathematics, Cornell University, Ithaca, NY 14853. \newline\textup{karola@math.cornell.edu}
}
\author{Alexander Postnikov}
\address{Alexander Postnikov, Department of Mathematics, Massachusetts Institute of Technology, Cambridge, MA 02139. \newline\textup{apost@math.mit.edu}
}
\thanks{Tam\'as K\'alm\'an  was supported by the Japan Society for the Promotion of Science (JSPS) Grant-in-Aid for Scientific Research C no.\ 23K03108. Karola M\'esz\'aros was  supported by NSF Grants ~DMS-1847284 and  ~DMS-2348676. Alexander Postnikov   was  supported by NSF Grant DMS-2054129.}
\begin{document}

\begin{abstract} Fox's conjecture from 1962, that the absolute values of the coefficients of the  Alexander polynomial of an alternating link are trapezoidal, 
has remained stubbornly open to this date. Recently  Fox's conjecture was settled for all special alternating links. In this paper we take a broad view of the Alexander polynomials of special alternating links, showing that they are a generating function for a statistic on certain vector configurations. We  study three types of vector configurations: (1) vectors arising from cographic matroids, (2) vectors arising from graphic matroids, (3) vectors arising from totally positive matrices. We prove that Alexander polynomials of special alternating links belong to both classes (1) and (2), and prove log-concavity, respectively trapezoidal, properties for classes (2) and (3). As a special case of our results, we obtain a new proof of Fox's conjecture for special alternating links.
 \end{abstract}

\maketitle

\section{Introduction}

Log-concavity abounds in mathematics ---  with proofs often  hard. 
Log-concavity of a sequence $a_1, \ldots, a_n$   with no internal zeros implies unimodality, and more strongly, it implies the trapezoidal property: $$a_1<a_2<\cdots <a_k=a_{k+1}=\cdots=a_m>a_{m+1}>\cdots>a_n$$ for some $1\leq k\leq m\leq n$.  
 Fox's famous \textit{trapezoidal conjecture} from 1962 \cite{fox} states that the Alexander polynomial of an alternating link is trapezoidal; see Section \ref{sec:knot} and Conjecture \ref{fox} for more details. This is known to hold in a number of special cases, see \cite{tamas-recent} and references within. In particular, it was recently settled for special alternating links in \cite{hafner2023logconcavity} by proving the stronger property of log-concavity. 

In this paper we consider two natural stronger criteria for a sequence that imply the trapezoidal property. One is that the sequence is log-concave with no internal zeros, as mentioned above. The other is what we refer to as \emph{box-positivity}, saying that the sequence coincides with the sequence of coefficients of a polynomial that we can write as a nonnegative combination of products of $q$-numbers $[n_1]_q\cdots[n_k]_q$, where the sum of the integers $n_1, \ldots, n_k$ is fixed (see Definition \ref{def:box} and Lemma \ref{lem:box}). 
We will meet both scenarios below.  
 
We study the family of polynomials
 $f_{{A}}(t)$, defined in \cite{slicing} for flat matrices $A=[a_{1}^T\mid \cdots \mid a_{N}^T]$, where $a_i \in \R^d$ for $i \in [N]$. All these polynomials are palindromic and we show that they include the Alexander polynomial of any special alternating link as a special case. The polynomial $f_{{A}}(t)$  enumerates the bases of the oriented matroid $M[A]$, associated to a {flat} matrix $A$, according to the statistic $\ext_\rho$ called {external semi-activity}:
 $$ f_{{A}}(t)=\sum_{B} t^{\ext_{\rho}(B)} \Vol{\Pi_B}, $$ where the sum is over all bases $B$ of  $M[A]$, $\Vol$ is a volume form on $\R^d$, and $\Pi_B$ is a parallelepiped associated to $B$.  
Here $\rho$ is a discrete parameter which is necessary to set up the formula, but it does not influence the coefficients of $f_A(t)$. A matrix $A$ is called \emph{flat} if $a_1, \ldots, a_N$ lie in an affine hyperplane $H=\{x \in \R^d \mid h(x)=1\}$, where $h \in {(\R^d)}^*$ is a linear form.  See Section \ref{sec:f_A} for a detailed explanation and precise definitions.

In this paper we study the polynomial   
$f_{{A}}(t)$  for flat matrices $A$ whose columns constitute one of the following three types of vector configurations:
\begin{enumerate}
\item\label{item:cograph} 
vectors arising from cographic matroids, 
\item\label{item:graph} vectors arising from graphic matroids, 
\item vectors arising from totally positive matrices. 
\end{enumerate}

 The first of our main results is that the  Alexander polynomial of a special alternating link is a special case of $f_{{A}}(t)$, as it belongs to both classes \eqref{item:cograph} and \eqref{item:graph}:

 \begin{maintheorem} \label{thm:intro-Alex} The Alexander polynomial $\Delta_L(t)$, associated to a special alternating link $L$, equals  $f_{{A}}(-t)$ for a flat matrix $A$, whose oriented matroid $M[A]$ is both graphic and cographic. 
 \end{maintheorem}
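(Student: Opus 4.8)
The plan is to pass through the connected plane bipartite graph that encodes a special alternating link. Recall from Section~\ref{sec:knot} that a special alternating link $L$ is built from a connected plane bipartite graph $G$, that conversely every special alternating link arises this way, and that $\Delta_L(t) \doteq I_G(-t)$, where $I_G$ is the interior polynomial of $G$ and $\doteq$ denotes equality up to a unit $\pm t^k$ (this is the knot-theoretic input used in \cite{hafner2023logconcavity}; see also \cite{tamas-recent}). Thus it suffices to exhibit, for every connected plane bipartite graph $G$, a flat matrix $A$ such that the oriented matroid $M[A]$ is both graphic and cographic and such that $f_A(t) = I_G(t)$; Theorem~\ref{thm:intro-Alex} then follows upon substituting $-t$ and fixing the usual normalization of $\Delta_L$ (palindromic, nonzero constant term), which agrees with the one carried by $f_A(-t)$.

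First I would write down the matrix. Let $V_1, V_2$ be the color classes of $G$ and let $A = A_G$ be the matrix whose columns are the vectors $e_i - e_j \in \R^{V_1 \sqcup V_2}$, one for each edge $ij$ of $G$ with $i \in V_1$ and $j \in V_2$ (after deleting the coordinate indexed by one chosen vertex of $V_2$, so that the columns span the ambient space; this alters neither flatness, nor the matroid, nor the lattice volume below). Because $G$ is bipartite, every edge runs between $V_1$ and $V_2$, so with $h := \sum_{i \in V_1} x_i$ we get $h(e_i - e_j) = 1$ for every column; hence $A_G$ is flat. This is the one place where bipartiteness enters, and it is exactly what places special alternating links inside the $f_A$ framework. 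The linear dependences among the columns are the signed cycles of $G$, so $M[A_G]$ is the oriented graphic matroid of $G$ and in particular is graphic; and since $G$ is planar, $M(G) = M^*(G^*)$ is the cographic matroid of the plane dual $G^*$, so $M[A_G]$ is cographic as well. Both required properties of $M[A]$ thus reduce to bipartiteness and planarity of $G$.

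Next I would evaluate $f_{A_G}$. The bases of $M(G)$ are the spanning trees $T$ of $G$, and the edge vectors of a spanning tree form a unimodular basis of the relevant lattice, so $\Vol \Pi_T$ is the same (equal to $1$ in the lattice normalization) for every basis; therefore $f_{A_G}(t) = \sum_{T} t^{\ext_\rho(T)}$ is simply the generating function of spanning trees of $G$ by external semi-activity. By \cite{slicing} this is the $h$-polynomial of a triangulation of the root polytope $\conv(A_G) = Q_G$, and — independently of the discrete parameter $\rho$ — this $h$-polynomial equals the interior polynomial $I_G(t)$, via the theory of root-polytope triangulations and the K\'alm\'an--Postnikov duality $I_G = I_{G^*}$. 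Combining with $\Delta_L(t) \doteq I_G(-t)$ yields $\Delta_L(t) = f_{A_G}(-t)$, completing the argument.

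I expect the main obstacle to be the identification $\sum_T t^{\ext_\rho(T)} = I_G(t)$. The left-hand side is a statistic on spanning trees of $G$, i.e.\ on bases of the graphic matroid, whereas $I_G$ is naturally a statistic on the hypertrees of $G$ (objects attached to a single color class), so the two become comparable only through the geometry of $Q_G$: one must match the order underlying $\ext_\rho$ with a shelling of a triangulation of $Q_G$ whose $h$-vector is known to be $I_G$, and verify the claimed independence of $\rho$. The remaining issues are bookkeeping: pinning down (or rather, fixing by convention) the unit $\pm t^k$ relating $\Delta_L$ to $I_G(-t)$; checking that the flatness and rank conventions of \cite{slicing} are met by $A_G$ after the coordinate deletion above; and permitting $G$ to have multiple edges, which appear as repeated columns of $A_G$ but cause no trouble.
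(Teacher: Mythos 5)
Your construction of the matrix $A$ (the reduced signed incidence matrix of the plane bipartite graph $G$ with the source-to-sink orientation $\vec{G}$), your verification that it is flat via the linear form $h=\sum_{i\in V_1}x_i$, and your observation that planarity makes $M[A]$ simultaneously graphic and cographic all match the paper's setup exactly (Section~\ref{sec:graph}, Lemma~\ref{lem:?}, Section~\ref{sec:planar}). The divergence, and the genuine gap, is in what you plug into the knot theory side and how you connect it to $f_A$.

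The paper never invokes the interior polynomial. Its knot-theoretic input is the Murasugi--Stoimenow theorem (Definition/Theorem~\ref{alexpoly}): $\Delta_{L_G}(-t)\sim P_{D}(t)$ where $D$ is the alternating dimap dual to $\vec{G}$ and $P_D(t)=\sum_k c_k(D,r)t^k$ counts $k$-spanning trees. The heart of the paper's proof is then Lemma~\ref{lem:sigma}: by choosing the generic vector $\rho$ to come from an edge ordering induced by an Eulerian tour of $D$ rooted at $r$, the externally semi-active non-base elements of the cographic matroid $M_D^*$ for a basis $\Edg(D)\setminus\Edg(T)$ are \emph{exactly} the edges of $T$ pointing away from $r$. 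This gives $f_{M_D^*}(t)=P_D(t)$ (Theorem~\ref{thm:pd}) with a short, concrete argument, and Lemma~\ref{lem:dual} transfers it across planar duality to $f_{M_{\vec{G}}}(t)$. You, by contrast, invoke $\Delta_L(t)\doteq I_G(-t)$ and then need $f_{A_G}(t)=I_G(t)$ --- and this second identity is precisely what you flag as the main obstacle and then leave unproven. Pointing to ``the theory of root-polytope triangulations and the K\'alm\'an--Postnikov duality'' is not a proof: the statistic $\ext_\rho$ on spanning trees has to be matched with the hypertree-based definition of $I_G$, or with a shelling whose $h$-vector is known to give $I_G$, and the independence of $\rho$ has to be traced through that identification. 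That matching is nontrivial and is exactly the content that the paper supplies in a different (and arguably more elementary) form via Lemma~\ref{lem:sigma}. In short: same matrix, same flatness argument, same graphic/cographic reasoning, but the central identity connecting $f_A$ to a link polynomial is missing from your write-up, whereas the paper proves its analogue (cographic external semi-activity equals the $k$-spanning-tree count) cleanly and in full.
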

 
 See Theorem \ref{cor:bip} in the text for the above theorem, as well as  Section \ref{sec:knot} for knot theory  background on the Alexander polynomial and special alternating links. 
 Using Theorem \ref{thm:intro-Alex} we can reinterpret the main result of \cite{hafner2023logconcavity} about the Alexander polynomials of special alternating links as follows: the sequence of coefficients of $f_{{A}}(t)$, where $A$ arises from a planar bipartite graph, is log-concave. In Section \ref{sec:graphic} we generalize this for all bipartite graphs --- thus dropping the planarity requirement ---  as follows:

 \begin{maintheorem} \label{thm:intro-graphical} The sequence of coefficients of the polynomial  $f_{{A}}(t)$, associated to the matrix $A$ arising from a bipartite graph, is log-concave. In particular, it is trapezoidal.
 \end{maintheorem}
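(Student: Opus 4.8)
The plan is to realize the polynomial $f_A(t)$ attached to a bipartite graph as the Ehrhart-type $h^*$-polynomial (or a close relative) of a generalized permutohedron — concretely, a root polytope or trimmed generalized permutohedron attached to the bipartite graph — and then invoke a Lefschetz-type or Lorentzian/matroidal log-concavity statement for that polytope. Let $G$ be bipartite with parts $X$ and $Y$, so that the columns of the associated flat matrix $A$ are the vectors $e_i - e_j$ (suitably, $e_i + f_j$ after the flattening coordinate) for edges $ij$ of $G$; the oriented matroid $M[A]$ is then the graphic matroid of $G$. First I would recall from \cite{slicing} the precise combinatorial model for $f_A(t)$: the sum over bases $B$ (= spanning trees of $G$) of $t^{\ext_\rho(B)}$ weighted by $\Vol \Pi_B$, and identify the weighted external-activity generating function with the $h^*$-polynomial of the root polytope $\mathcal{Q}_G = \conv\{0\}\cup\{e_i + f_j : ij \in E(G)\}$, or equivalently with the Ehrhart $h^*$-polynomial of the associated "reduced" subdivision. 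For bipartite $G$ this is exactly the setting of root polytopes studied by the second author and collaborators, where triangulations are indexed by spanning trees and shelling orders realize external activity.

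The second, and main, step is the log-concavity of the resulting $h^*$-vector. Here I see two viable routes. Route (A): show the $h^*$-polynomial of $\mathcal{Q}_G$ agrees with the $f$- or $h$-polynomial of a matroid complex or a broken-circuit complex — e.g. via the interpretation of external activity as defining the internal/external activity decomposition à la Crapo, linking $f_A(t)$ to an evaluation of the Tutte polynomial $T_G(x,y)$ along a line — and then apply the Huh–Katz–Adiprasito–Wang log-concavity of Whitney numbers / the Tutte polynomial coefficients. Route (B): prove the generating polynomial is a \emph{Lorentzian polynomial} in the sense of Brändén–Huh, by exhibiting $f_A(t)$ as the (univariate specialization of a) volume polynomial of the nef cone of a toric variety associated to the root polytope, or as a specialization of a Lorentzian polynomial built from the $M$-convexity of the spanning-tree/external-activity data; Lorentzian polynomials are log-concave and have no internal zeros, giving the trapezoidal conclusion immediately. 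I would pursue Route (B) as primary, because it most directly yields "log-concave with no internal zeros," which is what the trapezoidal property requires, and because root polytopes of bipartite graphs are precisely where the $M$-convexity structure is cleanest.

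Concretely, the key steps in order: (1) translate $f_A(t)$ for bipartite $G$ into the $h^*$-polynomial of the root polytope $\mathcal{Q}_G$, using the spanning-tree triangulation and the fact (from \cite{slicing}) that $\ext_\rho$ is, up to the $\rho$-independent reindexing, the shelling number statistic; (2) observe that in the bipartite case the $h^*$-polynomial equals, up to normalization, an evaluation of $T_G$ and also the $h$-vector of a shellable simplicial complex, hence has nonnegative entries with no internal zeros once we know the triangulation is shellable (which it is, being unimodular and flag for bipartite $G$); (3) establish log-concavity — under Route (B), by building the bivariate (or multivariate) lift whose Lorentzian property degenerates to the claimed log-concavity, checking the Hessian rank-one / $M$-convexity condition on the exponent support; (4) conclude the trapezoidal property from log-concavity plus no internal zeros, exactly as stated in the introduction of this excerpt. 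The main obstacle I anticipate is Step (3): certifying the Lorentzian/log-concavity property directly for $f_A(t)$ in this generality. The combinatorial reindexing by $\rho$ and the volume weights $\Vol \Pi_B$ (which are genuinely present when the graphic matroid is not unimodular? — for bipartite graphs the matrix is totally unimodular, so those weights are all $1$, which is a helpful simplification) must be handled carefully so that the univariate $f_A(t)$ is recognized as a bona fide specialization of a Lorentzian polynomial rather than merely a log-concave-looking sum; bridging "evaluation of the Tutte polynomial along a line" to "Lorentzian" is exactly the technical heart, and is where I would expect to import, and possibly re-prove in this language, the results of \cite{hafner2023logconcavity} in their non-planar generalization.
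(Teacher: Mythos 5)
Your Route (B) captures the paper's high-level strategy --- realize $f_A(t)$ geometrically inside a generalized permutahedron and invoke Br\"and\'en--Huh's Lorentzian theory --- but your Route (A) would not work, and your Route (B) leaves out the construction that actually makes the argument go.

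Route (A) fails because $f_A(t)$ is built from external \emph{semi-}activity (Remark \ref{rem:order}), not Tutte's classical external activity, so $f_A(t) \ne T_G(1,t)$ in general. For $G = C_4$ with the standard bipartite orientation, the $4$-cycle splits as $|C^+|=|C^-|=2$ and one computes $f_A(t) = 2 + 2t$, whereas $T_{C_4}(1,t) = 3 + t$; indeed $f_A$ is always palindromic, which $T_G(1,t)$ is not. Hence Adiprasito--Huh--Katz log-concavity of Whitney numbers is not applicable, and your concern about ``bridging evaluation of the Tutte polynomial to Lorentzian'' is moot: the paper never touches the Tutte polynomial.

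For Route (B), the missing ingredient is the \emph{$l$-trimmed zonotope} of Section \ref{sec:intpoint}. For a flat unimodular $A$ and an $m$-admissible vector $l$ (Definition \ref{def:adm}), Theorem \ref{thm:f} gives
\[
\sum_{p \in Z_{A,l}^- \cap \Z^k} t^{h(p)} = f_A(t)\cdot t^{d-m},
\]
where $h$ witnesses flatness and $Z_{A,l}^-$ is as in \eqref{eq:trim}; this follows from the zonotopal tiling of Theorem \ref{thm:tiling} together with Lemma \ref{lem:par}. For bipartite $G$ with the standard orientation $\vec{G}$, Lemma \ref{flows} produces an explicit admissible $l$ by a flow argument, and Lemma \ref{lem:trim} identifies $Z_{I(\vec{G}),l}^-$ with a translate of Postnikov's trimmed zonotope $Z_G^-$, which \emph{is} a generalized permutahedron. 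Then Br\"and\'en--Huh's Theorem \ref{3.10} shows the integer point transform of $Z_G^-$ is denormalized Lorentzian; specializing variables (Lemma \ref{pak} and \cite[Cor.~3.8]{hafner2023logconcavity}) keeps it Lorentzian in two variables, and Proposition \ref{lem:log-concavity-of-coeffs} yields log-concavity. So the geometric object is the trimmed zonotope, with lattice points counted by the linear level $x_1+\dots+x_m$, rather than the $h^*$-polynomial of a root polytope. Your observation that $\Vol\Pi_B \equiv 1$ by total unimodularity of $I(\vec{G})$ is correct and is used implicitly throughout.
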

 
 See Theorem \ref{thm:graphical} in the text for the above theorem. 
 Note that as a consequence of Theorems \ref{thm:intro-Alex} and \ref{thm:intro-graphical} we obtain a new proof of Fox's conjecture for special alternating links:
 
 \begin{corollary*} 
   \cite[Theorem 1.2]{hafner2023logconcavity}
 The coefficients of the Alexander polynomial 
 $\Delta_L(-t)$, where $L$ is a special alternating link, form a  log-concave sequence with no internal zeros. In particular the sequence is trapezoidal, proving Fox's conjecture in this case.
\end{corollary*}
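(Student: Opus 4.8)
The plan is to obtain the Corollary as an almost immediate consequence of Theorems~\ref{thm:intro-Alex} and~\ref{thm:intro-graphical}; the only genuine content of the argument is a change of variable $t\mapsto -t$ together with the observation that the matrix furnished by Theorem~\ref{thm:intro-Alex} meets the hypothesis of Theorem~\ref{thm:intro-graphical}.

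First I would apply Theorem~\ref{thm:intro-Alex} to the special alternating link $L$, obtaining a flat matrix $A$ with $\Delta_L(t)=f_A(-t)$ and with $M[A]$ both graphic and cographic. In particular $A$ arises from a graph $G$, and, tracing through the construction behind Theorem~\ref{thm:intro-Alex} (the passage from a special alternating link to its associated planar graph), one sees that $G$ is bipartite. Substituting $t\mapsto -t$ in the identity of Theorem~\ref{thm:intro-Alex} yields $\Delta_L(-t)=f_A(t)$. Since $A$ arises from a bipartite graph, Theorem~\ref{thm:intro-graphical} applies: the coefficient sequence of $f_A(t)$ is log-concave and, having no internal zeros, trapezoidal. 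Hence the coefficient sequence of $\Delta_L(-t)$ is log-concave with no internal zeros, and in particular trapezoidal, by the implication recalled at the beginning of the Introduction.

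To see that this is exactly Fox's conjecture (Conjecture~\ref{fox}) for $L$, I would note that $f_A(t)=\sum_B t^{\ext_\rho(B)}\Vol\Pi_B$ has nonnegative coefficients; therefore the coefficients of $\Delta_L(-t)=f_A(t)$ are precisely the absolute values of the coefficients of $\Delta_L(t)$, i.e.\ $\Delta_L$ has alternating signs. Thus the trapezoidal property just established for $\Delta_L(-t)$ is the assertion that the absolute values of the coefficients of $\Delta_L(t)$ form a trapezoidal sequence.

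Granting Theorems~\ref{thm:intro-Alex} and~\ref{thm:intro-graphical}, essentially no obstacle remains in this last step --- the substance of the Corollary lives entirely in those two theorems. Within the deduction itself, the one point needing care is the verification that the graph $G$ underlying $M[A]$ is bipartite, so that Theorem~\ref{thm:intro-graphical} is applicable, together with the bookkeeping that the absence of internal zeros is correctly transported along the identity $\Delta_L(-t)=f_A(t)$. Should one prefer not to rely on Theorem~\ref{thm:intro-graphical} for this last gap-freeness, one could instead invoke the classical fact that the Alexander polynomial of a non-split alternating link has no gaps in its support; but extracting it from Theorem~\ref{thm:intro-graphical} keeps the proof self-contained within the present paper.
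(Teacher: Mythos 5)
Your proposal is correct and follows essentially the same route as the paper: the corollary is obtained immediately by combining Theorem~\ref{cor:bip} (the textual version of Theorem~\ref{thm:intro-Alex}, where $A$ is explicitly $I(\vec G)$ for the connected planar bipartite graph $G$ dual to the alternating dimap) with Theorem~\ref{thm:graphical}, which applies because $G$ is bipartite. One small precision worth noting: the identity in Theorem~\ref{cor:bip} is $\Delta_{L_G}(-t)\sim f_{M_{\vec G}}(t)$, i.e.\ equality only up to $\pm t^k$, so your substitution $t\mapsto -t$ gives $\Delta_L(-t)\sim f_A(t)$ rather than literal equality --- but this is harmless, since log-concavity with no internal zeros, trapezoidality, and sign-alternation are all preserved under multiplication by $\pm t^k$, and the Alexander polynomial is in any case only defined up to this ambiguity.
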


This result appears as Corollary \ref{cor:main} in the text. 
 Finally, we turn to totally positive matrices $A$:
 
  \begin{maintheorem} \label{thm:intro-totpos}
  The sequence of coefficients of the polynomial  $f_{{A}}(t)$, associated to
  a flat totally positive matrix with last row all $1$s, is box-positive. In particular, it is trapezoidal.  
   \end{maintheorem}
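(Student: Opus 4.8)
The plan is to exploit the very rigid structure of totally positive matrices. Since $A$ is flat with last row all $1$s and totally positive, $M[A]$ is the rank-$d$ alternating (moment-curve) oriented matroid on $[N]$: every $B\in\binom{[N]}{d}$ is a basis, $\Vol\Pi_B=\det A_B>0$, and $\conv(a_1,\dots,a_N)$ is a cyclic polytope. Since the coefficients of $f_A(t)$ do not depend on $\rho$ (Section~\ref{sec:f_A}), we may fix $\rho$ so that $\ext_\rho$ is an explicit, order-dependent statistic on the $d$-subsets of $[N]$, and then $f_A(t)=\sum_{B\in\binom{[N]}{d}}t^{\ext_\rho(B)}\det A_B$. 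Rank $2$ already displays the mechanism and serves as a guide: writing $b_\ell:=\det A_{\{\ell,\ell+1\}}>0$, the minor $\det A_{\{i,j\}}$ telescopes as $\sum_{\ell=i}^{j-1}b_\ell$, one has $\ext_\rho(\{i,j\})=(i-1)+(N-j)$ (the number of elements of $[N]$ outside the interval $[i,j]$), and interchanging the two summations yields
\[
f_A(t)=\sum_{\ell=1}^{N-1} b_\ell\,[\ell]_t\,[N-\ell]_t,
\]
a nonnegative combination of products of two $q$-numbers of common total $N$ --- box-positivity by inspection.

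For general $d$ I would produce such a decomposition by combining the Lindström--Gessel--Viennot model with the combinatorics of the cyclic polytope. Total positivity provides a planar acyclic network $\Gamma$ with $d$ ordered sources, $N$ ordered sinks, and positive edge weights, realizing $\det A_B$ as the positively weighted count of vertex-disjoint, order-preserving path families into the sinks labelled by $B$; hence
\[
f_A(t)=\sum_{\mathcal P}\operatorname{wt}(\mathcal P)\,t^{\ext_\rho(\operatorname{sinks}(\mathcal P))},
\]
the sum running over all vertex-disjoint path families $\mathcal P$ in $\Gamma$. I would reorganize this by a peeling induction on $d$: deleting the top path $P_1$ of a family leaves a vertex-disjoint family in the sub-network $\Gamma\setminus P_1$, which is again totally positive, and summing the $t$-weight over the admissible endpoints of $P_1$ against the $\ext_\rho$ exponent should split off exactly one factor $[n]_t$ with all residual weights still nonnegative, after which the inductive hypothesis treats $\Gamma\setminus P_1$. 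The expected outcome is
\[
f_A(t)=\sum\nolimits_j c_j\,[n^{(j)}_1]_t\cdots[n^{(j)}_d]_t,\qquad c_j\ge 0,\quad \sum\nolimits_i n^{(j)}_i=N,
\]
where $j$ ranges over the ways of splitting $[N]$ into $d$ consecutive blocks (one per source) and $c_j$ is a product of positive minors and slacks of $A$ across the cut points; this is box-positivity in the sense of Definition~\ref{def:box}, so Lemma~\ref{lem:box} delivers the trapezoidal property. Equivalently, and perhaps more cleanly, one could pick a regular triangulation of the cone $\conv(0,a_1,\dots,a_N)$ over the cyclic polytope (triangulations of cyclic polytopes are well understood) and a shelling whose restriction data visibly groups into box polynomials, then identify the resulting $h$-polynomial with $f_A(t)$ via the semi-activity formula.

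The step I expect to be the main obstacle is exactly this reorganization. The external semi-activity $\ext_\rho(B)$ and the planar-network data (equivalently, the chosen triangulation of the cyclic polytope) are a priori unrelated, and the crux is to choose the network --- equivalently $\rho$, equivalently the triangulation --- so that $\ext_\rho(\operatorname{sinks}(\mathcal P))$ decomposes additively across the ``gaps'' of $\mathcal P$, making the peeling split off one clean $q$-number at a time while respecting the palindromy of $f_A$. A necessary preliminary is to determine the explicit form of $\ext_\rho$ on the alternating oriented matroid beyond rank $2$ and to check that it is symmetric enough to drive the induction. Finally, the positivity of the coefficients $c_j$ should genuinely require total positivity --- the positivity of all minors of $A$, not only the maximal ones --- and not merely the combinatorial type of the oriented matroid; the planar-network model, or the known monotonicity relations among minors of totally positive matrices, is the natural tool here.
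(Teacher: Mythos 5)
Your proposal captures the right conceptual skeleton and, in rank $2$, it \emph{is} the paper's argument: the explicit form of $\ext$ for the alternating oriented matroid, the telescoping $\det A_{\{i,j\}}=\sum_{\ell=i}^{j-1}b_\ell$, and the interchange of summations are exactly what the paper does. You also correctly conjecture the shape of the final formula --- a sum over splittings of $[N]$ into $d$ consecutive blocks, with $q$-numbers of the block lengths and nonnegative coefficients. This is precisely the paper's Theorem~\ref{thm:A}, with the blocks recorded by $\{j_1<\dots<j_{d-1}\}\subset[N-1]$.

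Where your argument has a genuine gap is the passage to general $d$, and it is exactly the step you flag as the obstacle. Two concrete ingredients are needed and neither is supplied. First, the explicit formula $\ext(B)=(i_1-1)+(i_3-i_2-1)+(i_5-i_4-1)+\cdots$ (Lemma~\ref{lem:extB}): without it you cannot even formulate the interchange, and its alternating-sign structure is what makes the factorization delicate. Second, and more importantly, the minor decomposition that replaces your rank-$2$ telescoping. The paper observes that a flat totally positive $A$ factors as $A=\tilde C\cdot J$, where $J$ is the lower-triangular all-$1$s matrix and $\tilde C$ is built from a smaller totally positive matrix $C$; Cauchy--Binet then yields
\[
\Delta_{i_1,\dots,i_d}(A)=\sum_{i_1\le j_1<i_2\le j_2<\cdots\le j_{d-1}<i_d}\Delta_{j_1,\dots,j_{d-1}}(C),
\]
an interlacing identity (Lemma~\ref{lem:A-C}) that is the true higher-rank analogue of your telescoping. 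Once the $j$'s are fixed, the $i_k$'s range over \emph{independent} intervals $[j_{k-1}+1,j_k]$, and the exponent $q^{\ext(B)}$ factors multiplicatively over the $i_k$'s, giving one $q$-number per block. Your ``peel off the top path'' induction does not obviously produce this: removing $i_1$ alone does not separate the exponent, because $i_2,i_3,\dots$ appear with alternating signs and peeling changes their parity. The peeling becomes clean only after you first condition on the intermediate positions $j_1,\dots,j_{d-1}$ (which in the LGV picture are the sinks of the $\tilde C$-layer, equivalently sources of the $J$-layer); this is the missing pivot. Your $c_j$ is also not a ``product of minors and slacks of $A$'' but a single maximal minor $\Delta_{j_1,\dots,j_{d-1}}(C)$ of the column-differenced matrix. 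Finally, the triangulation/shelling alternative you sketch is not used by the paper and would require a separate argument identifying the $h$-polynomial of a specific triangulation with $f_A$; it is not a drop-in replacement.

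So: correct framework, correct rank-$2$ case, correct conjectured answer, but the crux for general $d$ --- the $A=\tilde C J$ factorization, the resulting interlacing minor identity, and the verification that $\ext$ factors over the blocks once the $j$'s are fixed --- is asserted rather than proved.
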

   
   See Section \ref{sec:TP} for more details. Theorem \ref{thm:intro-totpos} appears as Corollary \ref{cor:C} in the text. 
With Theorems \ref{thm:intro-graphical} and \ref{thm:intro-totpos}  as our motivation, we pose the  following broad question:

   \begin{question*} Is it true that the sequence of coefficients of the polynomial  $f_{{A}}(t)$, associated to a flat matrix $A$, is always trapezoidal? When do the stronger properties of log-concavity and/or box-positivity hold?
    \end{question*}
       

\noindent \textbf{Roadmap of the paper.}  In Section \ref{sec:knot} we explain the knot theory background necessary for the paper.  In Section \ref{sec:f_A} we introduce the polynomials $f_{{A}}(t)$ that are our main objects of study. In Section \ref{sec:graph-cograph} we investigate when graphic and cographic matroids yield  matrices $A$ for which $f_A$ is defined. In Section \ref{sec:cographic} we show that the Alexander polynomial of a special alternating link is a special case of the polynomials $f_{{A}}(t)$ as stated in Theorem \ref{thm:intro-Alex}. In Section \ref{sec:intpoint} we give an alternative description of the polynomials $f_{{A}}(t)$ arising from matrices $A$ whose columns form a unimodular vector configuration. Section \ref{sec:graphic} is devoted to proving Theorem \ref{thm:intro-graphical}, while Section \ref{sec:TP} contains the proof of  Theorem \ref{thm:intro-totpos}.

\section{Knot theory background}
\label{sec:knot}

Discovered in the 1920's \cite{alexander1928topological}, the Alexander polynomial $\Delta_L(t)$, associated to an oriented link $L$, was the first polynomial-valued knot invariant. More precisely,  $\Delta_L(t)\in\Z[t]$ and invariance means that if the oriented links $L_1$ and $L_2$ are isotopic then $\Delta_{L_1}(t)\sim\Delta_{L_2}(t)$, where $\sim$ denotes equality up to multiplication by $\pm t^k$ for some integer~$k$.  
The present paper is concerned with  special alternating links only, and as such we will define the Alexander polynomial only for these; see Section \ref{sub:alexpoly} for the definition.

\subsection{Fox's trapezoidal conjecture} While a lot is known  about the Alexander polynomials of links and its coefficients --- for example, that it is palindromic for all links ---  a lot also remains an intrigue. One of the inspirations for this project is Fox's outstanding conjecture (1962): 
 
 \begin{conjecture} \label{fox} \cite{fox} Let $L$ be an alternating link. Then the coefficients of $\Delta_L(-t)$ form a trapezoidal sequence.
\end{conjecture}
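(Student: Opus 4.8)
The plan is to reduce Fox's conjecture for a general alternating link $L$ to the special alternating case — already settled by the Corollary above, equivalently \cite{hafner2023logconcavity} — by isolating the combinatorial core of the Alexander polynomial of an arbitrary reduced alternating diagram and enlarging the $f_A$-framework to accommodate it. Fix a reduced connected alternating diagram $D$ of $L$ with Tait (checkerboard) graph $G$. There is a signed spanning-tree (state-sum) expansion, of Kauffman--Crowell type, writing $\Delta_L(t)$ up to a unit as $\sum_{T}(-1)^{c(T)}\,t^{a(T)}$ over the spanning trees $T$ of $G$, where the exponent $a(T)$ records the interaction of $T$ with a fixed reference orientation and edge ordering of $G$. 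The alternating hypothesis forces all spanning trees contributing to a fixed power of $t$ to carry the same sign, so $\Delta_L(-t)$ has nonnegative coefficients with no internal zeros — this is the classical Crowell--Murasugi theorem — and what remains is the unimodality-strengthening to trapezoidality, which I would attack through log-concavity.

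When $D$ is a special alternating diagram, $G$ is an honest graph and (after the interchange $G\leftrightarrow G^{*}$) the tree sum above is exactly an instance of $f_A(t)$ for a graphic configuration $A$, so Theorem~\ref{thm:intro-graphical} applies. For a general alternating diagram, $G$ carries a $\pm$ labelling of its edges, recording which checkerboard colour each crossing sees, and the tree sum becomes a signed-graphic analogue of $f_A(t)$. The first task is therefore to build the correct oriented-matroid or polytopal model for this analogue: to attach to the signed Tait graph a flat vector configuration — plausibly coming from the signed-graphic matroid, or from a lattice polytope whose $h^{*}$-vector equals the coefficient sequence of $\Delta_L(-t)$ — generalizing the root-polytope / interior-polynomial description underlying the special case, and to identify $\Delta_L(-t)$ with the resulting external-activity or Ehrhart statistic.

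One tempting shortcut must be discarded at the outset: decomposing the Seifert surface of $D$ as an iterated Murasugi sum of checkerboard surfaces of special alternating diagrams and multiplying the resulting log-concave factors, using that the convolution of nonnegative log-concave sequences with no internal zeros is again of that type. This does not work, because the Alexander polynomial is \emph{not} multiplicative under Murasugi sum: already the figure-eight knot, whose fiber surface is the plumbing of a positive and a negative Hopf band, has $\Delta(-t)=t+3+t^{-1}$, not the symmetrized product $t+2+t^{-1}$ of the two Hopf-band factors $t+1$, precisely because the off-diagonal block of the Seifert form is nonzero. So the argument must be carried out in a single global combinatorial model rather than factor by factor.

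The main obstacle is the log-concavity itself. Once $\Delta_L(-t)$ is realized as a natural invariant of the signed Tait graph, one would hope to deduce log-concavity either by exhibiting it as the denormalization of a Lorentzian polynomial — extending the Lorentzian/matroidal input behind Theorem~\ref{thm:intro-graphical} from regular to signed-graphic configurations — or by a direct Minkowski-type volume argument for the associated polytope, in the spirit of \cite{hafner2023logconcavity}. Either route requires Hodge--Riemann or Lorentzian-type inequalities in a setting where the ambient matroid is no longer regular, which is exactly where present techniques fall short, and I regard this as the decisive step and the reason the conjecture has resisted proof. A sensible intermediate target, testing the proposed model before the full log-concavity is attempted, is to recover the Crowell--Murasugi no-internal-zeros statement and Murasugi's formula for the degree span of $\Delta_L$ directly from the $f_A$-type description.
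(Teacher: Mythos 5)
There is a genuine gap, and you identify it yourself. The statement you are addressing is Fox's conjecture in full generality, which the paper explicitly leaves open: it proves only the special alternating case (Corollary \ref{cor:main}), via Theorems \ref{cor:bip} and \ref{thm:graphical}. Your proposal is a research plan rather than a proof, and its decisive step --- establishing log-concavity (or even just trapezoidality) of the signed spanning-tree state sum attached to a general alternating Tait graph --- is precisely the point at which you write that ``present techniques fall short.'' An argument cannot terminate there. Two concrete things are missing. First, the model: you have not constructed the flat vector configuration $A$ with $f_A(-t)\sim\Delta_L(t)$ when the checkerboard graph carries crossings of both signs. This is not a routine extension of the paper's setup: Lemma \ref{lem} shows that flatness of the graphic presentation \emph{forces} the underlying graph to be bipartite, i.e., forces the special alternating case, so any signed-graphic analogue leaves the class of regular matroids, and with it the unimodularity that underlies the zonotopal/Ehrhart translation of Section \ref{sec:intpoint} and the identification of the trimmed zonotope with a generalized permutahedron (Lemma \ref{lem:trim}). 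Second, even granting such a model, the Lorentzian input (Theorem \ref{3.10} together with Proposition \ref{lem:log-concavity-of-coeffs}) applies only to integer point transforms of generalized permutahedra / M-convex supports; no analogue is currently available for the non-regular configurations your plan would require.

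The parts of your proposal that are correct are preparatory rather than probative: the Kauffman--Crowell spanning-tree expansion and the Crowell--Murasugi positivity (no internal zeros, degree span) are classical, and your observation that one cannot factor through Murasugi sums --- with the figure-eight knot as witness, $\Delta(-t)\sim t+3+t^{-1}$ versus the would-be product $t+2+t^{-1}$ --- is a sound and worthwhile warning. But these reduce nothing; after them the conjecture is exactly as open as before. As written, the proposal establishes no new case of the conjecture beyond what the paper already proves for special alternating links.
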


Recall that 
a link is called alternating if it possesses a planar diagram so that when we trace any strand in it, over- and undercrossing sites alternate. A non-diagrammatic definition of this class of links, based on spanning surfaces, is also possible but it has only been found relatively recently \cite{greene,howie}. The knot theory of alternating links is known to lend itself well to investigations by combinatorial methods. For example, the signs of the coefficients of their Alexander polynomials alternate, which is not true in general. Thus, by using the ambiguity mentioned above, we may assume that $\Delta_L(-t)$, for the alternating link $L$, has only non-negative coefficients. We have already done so in the phrasing of Conjecture \ref{fox}.

Fox's conjecture remains open in general, although some special cases have been settled by Hartley \cite{H79} for two-bridge knots, by Murasugi \cite{murasugi} for a family of alternating algebraic links, by Ozsv\'ath and Szab\'o \cite{OS03}, and later by Jong \cite{jong2009alexander}, for genus $2$ alternating knots, by Hafner, M\'esz\'aros and Vidinas  for  special alternating links \cite{hafner2023logconcavity}, and by Azarpendar, Juh\'asz, and K\'alm\'an for certain plumbings of special alternating links \cite{tamas-recent}. See the latter article for a thorough summary on the progress made on Fox's conjecture since ~1962.
 
We note that Stoimenow  \cite{stoi} strengthened  Fox's conjecture to log-concavity with no internal zeros, and some of the above results, including the case of special alternating links \cite{hafner2023logconcavity}, establish this stronger condition.

\subsection{Special alternating links} We now recall the construction of a special alternating link $L_G$ associated to a connected plane bipartite graph $G$, as it appears in Definition/Theorem \ref{alexpoly}, and then we construct the Alexander polynomial of $L_G$.
Given a plane bipartite graph $G$, let $M(G)$ be the medial graph of $G$: the vertex set of $M(G)$ is the set $\{v_e \mid e\in\Edg(G)\}$, and two vertices $v_e$ and $v_{e'}$ of $M(G)$ are connected by an edge if the edges $e$ and $e'$ are consecutive in the boundary of a face of $G$. (Sometimes there are two such faces, resulting in a pair of parallel edges in $M(G)$.) We think of a particular planar drawing of $M(G)$ here: the midpoints of the edges of the planar drawing of $G$ are the vertices of $M(G)$, whence faces of $M(G)$ correspond both to faces and vertices of $G$. See Figure \ref{fig:L_G} for an example.

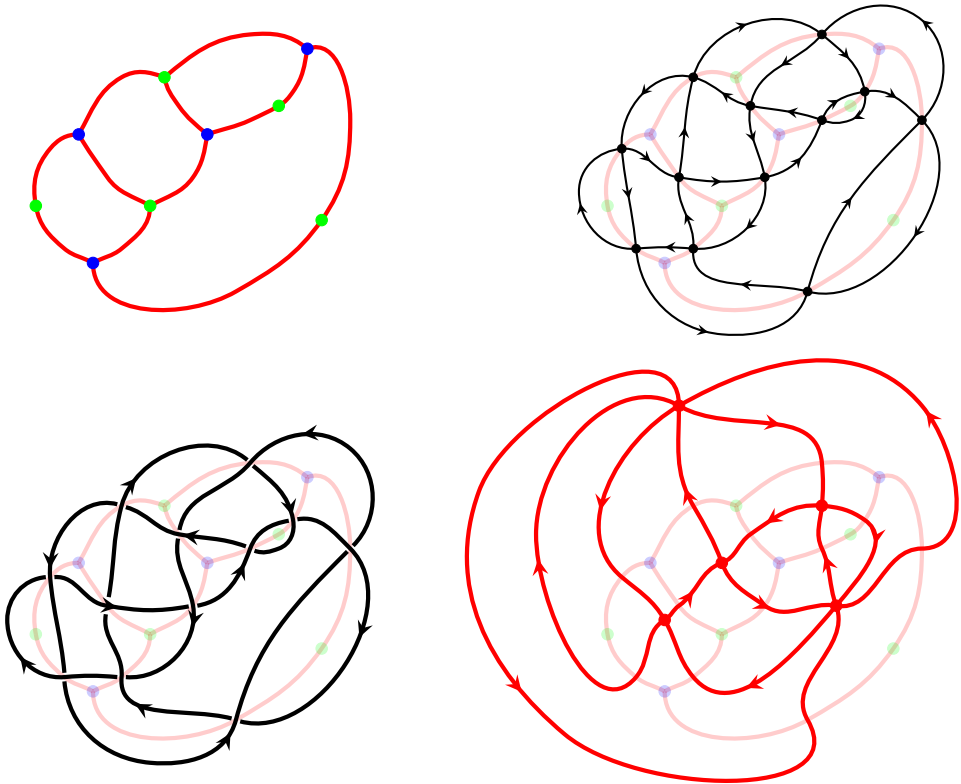
\begin{figure}[h]
\begin{tikzpicture}[scale=.19,decoration={markings, 
    mark= at position 0.5 with {\arrow{stealth}}}
] 

\begin{scope}[shift={(-40,0)}]
\draw [ultra thick, red] (-4,1) to [out=100,in=-130] (-3,5) to [out=50,in=180] (-1,6);
\draw [ultra thick, red] (-4,1) to [out=-80,in=140] (-2,-2) to [out=-40,in=150] (0,-3);
\draw [ultra thick, red] (0,-3) to [out=30,in=-140] (2,-2) to [out=40,in=-90] (4,1);
\draw [ultra thick, red] (4,1) to [out=150,in=-55] (1,3) to [out=125,in=-60] (-1,6);
\draw [ultra thick, red] (4,1) to [out=30,in=-125] (7,3) to [out=55,in=-100] (8,6);
\draw [ultra thick, red] (8,6) to [out=140,in=-55] (6,8) to [out=125,in=-80] (5,10);
\draw [ultra thick, red] (-1,6) to [out=60,in=-150] (2,10) to [out=30,in=160] (5,10);
\draw [ultra thick, red] (5,10) to [out=40,in=-175] (11,13) to [out=5,in=140] (15,12);
\draw [ultra thick, red] (8,6) to [out=20,in=-155] (11,7) to [out=25,in=-150] (13,8);
\draw [ultra thick, red] (13,8) to [out=30,in=-125] (14,9) to [out=55,in=-100] (15,12);
\draw [ultra thick, red] (15,12) to [out=20,in=90] (18,7) to [out=-90,in=55] (16,0);
\draw [ultra thick, red] (16,0) to [out=-125,in=30] (10,-5) to [out=-150,in=-90] (0,-3);

\draw [fill=blue,blue] (0, -3) circle [radius=0.4];
\draw [fill=blue,blue] (15, 12) circle [radius=0.4];
\draw [fill=blue,blue] (-1, 6) circle [radius=0.4];
\draw [fill=blue,blue] (8, 6) circle [radius=0.4];
\draw [fill=green,green] (4, 1) circle [radius=0.4];
\draw [fill=green,green] (-4, 1) circle [radius=0.4];
\draw [fill=green,green] (5, 10) circle [radius=0.4];
\draw [fill=green,green] (13, 8) circle [radius=0.4];
\draw [fill=green,green] (16, 0) circle [radius=0.4];
\end{scope}

\draw [opacity=.2,ultra thick, red] (-4,1) to [out=100,in=-130] (-3,5) to [out=50,in=180] (-1,6);
\draw [opacity=.2,ultra thick, red] (-4,1) to [out=-80,in=140] (-2,-2) to [out=-40,in=150] (0,-3);
\draw [opacity=.2,ultra thick, red] (0,-3) to [out=30,in=-140] (2,-2) to [out=40,in=-90] (4,1);
\draw [opacity=.2,ultra thick, red] (4,1) to [out=150,in=-55] (1,3) to [out=125,in=-60] (-1,6);
\draw [opacity=.2,ultra thick, red] (4,1) to [out=30,in=-125] (7,3) to [out=55,in=-100] (8,6);
\draw [opacity=.2,ultra thick, red] (8,6) to [out=140,in=-55] (6,8) to [out=125,in=-80] (5,10);
\draw [opacity=.2,ultra thick, red] (-1,6) to [out=60,in=-150] (2,10) to [out=30,in=160] (5,10);
\draw [opacity=.2,ultra thick, red] (5,10) to [out=40,in=-175] (11,13) to [out=5,in=140] (15,12);
\draw [opacity=.2,ultra thick, red] (8,6) to [out=20,in=-155] (11,7) to [out=25,in=-150] (13,8);
\draw [opacity=.2,ultra thick, red] (13,8) to [out=30,in=-125] (14,9) to [out=55,in=-100] (15,12);
\draw [opacity=.2,ultra thick, red] (15,12) to [out=20,in=90] (18,7) to [out=-90,in=55] (16,0);
\draw [opacity=.2,ultra thick, red] (16,0) to [out=-125,in=30] (10,-5) to [out=-150,in=-90] (0,-3);

\draw [opacity=.2,fill=blue,blue] (0, -3) circle [radius=0.4];
\draw [opacity=.2,fill=blue,blue] (15, 12) circle [radius=0.4];
\draw [opacity=.2,fill=blue,blue] (-1, 6) circle [radius=0.4];
\draw [opacity=.2,fill=blue,blue] (8, 6) circle [radius=0.4];
\draw [opacity=.2,fill=green,green] (4, 1) circle [radius=0.4];
\draw [opacity=.2,fill=green,green] (-4, 1) circle [radius=0.4];
\draw [opacity=.2,fill=green,green] (5, 10) circle [radius=0.4];
\draw [opacity=.2,fill=green,green] (13, 8) circle [radius=0.4];
\draw [opacity=.2,fill=green,green] (16, 0) circle [radius=0.4];

\draw [fill] (-3,5) circle [radius=.3];
\draw [fill] (-2,-2) circle [radius=.3];
\draw [fill] (2,-2) circle [radius=.3];
\draw [fill] (1,3) circle [radius=.3];
\draw [fill] (7,3) circle [radius=.3];
\draw [fill] (6,8) circle [radius=.3];
\draw [fill] (2,10) circle [radius=.3];
\draw [fill] (11,13) circle [radius=.3];
\draw [fill] (11,7) circle [radius=.3];
\draw [fill] (14,9) circle [radius=.3];
\draw [fill] (18,7) circle [radius=.3];
\draw [fill] (10,-5) circle [radius=.3];

\draw [thick,postaction={decorate}] (7,3) to [out=10,in=-110] (11,7);
\draw [thick,postaction={decorate}] (11,7) to [out=70,in=-170] (14,9);
\draw [thick,postaction={decorate}] (14,9) to [out=10,in=135] (18,7);
\draw [thick,postaction={decorate}] (18,7) to [out=-45,in=-15] (10,-5);
\draw [thick,postaction={decorate}] (10,-5) to [out=165,in=-95] (2,-2);
\draw [thick,postaction={decorate}] (2,-2) to [out=85,in=-100] (1,3);
\draw [thick,postaction={decorate}] (1,3) to [out=80,in=-105] (2,10);
\draw [thick,postaction={decorate}] (2,10) to [out=75,in=140] (11,13);
\draw [thick,postaction={decorate}] (11,13) to [out=-40,in=100] (14,9);
\draw [thick,postaction={decorate}] (14,9) to [out=-80,in=20] (13,7) to [out=-160,in=-20] (11,7);
\draw [thick,postaction={decorate}] (11,7) to [out=160,in=-10] (6,8);
\draw [thick,postaction={decorate}] (6,8) to [out=170,in=-15] (2,10);
\draw [thick,postaction={decorate}] (2,10) to [out=165,in=95] (-3,5);
\draw [thick,postaction={decorate}] (-3,5) to [out=-85,in=95] (-2,-2);
\draw [thick,postaction={decorate}] (-2,-2) to [out=-85,in=175] (4,-8) to [out=-5,in=-105] (10,-5);
\draw [thick,postaction={decorate}] (10,-5) to [out=75,in=-135] (18,7);
\draw [thick,postaction={decorate}] (18,7) to [out=45,in=-40] (18,14) to [out=140,in=50] (11,13);
\draw [thick,postaction={decorate}] (11,13) to [out=-130,in=80] (6,8);
\draw [thick,postaction={decorate}] (6,8) to [out=-100,in=100] (7,3);
\draw [thick,postaction={decorate}] (7,3) to [out=-80,in=-5] (2,-2);
\draw [thick,postaction={decorate}] (2,-2) to [out=175,in=5] (-2,-2);
\draw [thick,postaction={decorate}] (-2,-2) to [out=-175,in=-90] (-6,2) to [out=90,in=-175] (-3,5);
\draw [thick,postaction={decorate}] (-3,5) to [out=5,in=175] (1,3);
\draw [thick,postaction={decorate}] (1,3) to [out=-10,in=-170] (7,3);

\begin{scope}[shift={(-40,-30)}]
\draw [opacity=.2,ultra thick, red] (-4,1) to [out=100,in=-130] (-3,5) to [out=50,in=180] (-1,6);
\draw [opacity=.2,ultra thick, red] (-4,1) to [out=-80,in=140] (-2,-2) to [out=-40,in=150] (0,-3);
\draw [opacity=.2,ultra thick, red] (0,-3) to [out=30,in=-140] (2,-2) to [out=40,in=-90] (4,1);
\draw [opacity=.2,ultra thick, red] (4,1) to [out=150,in=-55] (1,3) to [out=125,in=-60] (-1,6);
\draw [opacity=.2,ultra thick, red] (4,1) to [out=30,in=-125] (7,3) to [out=55,in=-100] (8,6);
\draw [opacity=.2,ultra thick, red] (8,6) to [out=140,in=-55] (6,8) to [out=125,in=-80] (5,10);
\draw [opacity=.2,ultra thick, red] (-1,6) to [out=60,in=-150] (2,10) to [out=30,in=160] (5,10);
\draw [opacity=.2,ultra thick, red] (5,10) to [out=40,in=-175] (11,13) to [out=5,in=140] (15,12);
\draw [opacity=.2,ultra thick, red] (8,6) to [out=20,in=-155] (11,7) to [out=25,in=-150] (13,8);
\draw [opacity=.2,ultra thick, red] (13,8) to [out=30,in=-125] (14,9) to [out=55,in=-100] (15,12);
\draw [opacity=.2,ultra thick, red] (15,12) to [out=20,in=90] (18,7) to [out=-90,in=55] (16,0);
\draw [opacity=.2,ultra thick, red] (16,0) to [out=-125,in=30] (10,-5) to [out=-150,in=-90] (0,-3);

\draw [opacity=.2,fill=blue,blue] (0, -3) circle [radius=0.4];
\draw [opacity=.2,fill=blue,blue] (15, 12) circle [radius=0.4];
\draw [opacity=.2,fill=blue,blue] (-1, 6) circle [radius=0.4];
\draw [opacity=.2,fill=blue,blue] (8, 6) circle [radius=0.4];
\draw [opacity=.2,fill=green,green] (4, 1) circle [radius=0.4];
\draw [opacity=.2,fill=green,green] (-4, 1) circle [radius=0.4];
\draw [opacity=.2,fill=green,green] (5, 10) circle [radius=0.4];
\draw [opacity=.2,fill=green,green] (13, 8) circle [radius=0.4];
\draw [opacity=.2,fill=green,green] (16, 0) circle [radius=0.4];

\draw [ultra thick,postaction={decorate}] (7.3,3) to [out=10,in=-110] (11,7) to [out=70,in=-170] (13.7,8.95);
\draw [ultra thick,postaction={decorate}] (14.3,9.05) to [out=10,in=135] (18,7) to [out=-45,in=-15] (10.3,-5.1);
\draw [ultra thick,postaction={decorate}] (9.7,-4.9) to [out=165,in=-95] (2,-2) to [out=85,in=-100] (.9,2.4);
\draw [ultra thick,postaction={decorate}] (1.1,3.6) to [out=80,in=-105] (2,10) to [out=75,in=140] (10.8,13.2);
\draw [ultra thick,postaction={decorate}] (11.2,12.8) to [out=-40,in=100] (14,9) to [out=-80,in=20] (13,7) to [out=-160,in=-20] (11.3,6.85);
\draw [ultra thick,postaction={decorate}] (10.7,7.15) to [out=160,in=-10] (6,8) to [out=170,in=-15] (2.3,9.9);
\draw [ultra thick,postaction={decorate}] (1.7,10.1) to [out=165,in=95] (-3,5) to [out=-85,in=95] (-2,-1.7);
\draw [ultra thick,postaction={decorate}] (-2,-2.3) to [out=-85,in=175] (4,-8) to [out=-5,in=-105] (10,-5) to [out=75,in=-135] (17.8,6.8);
\draw [ultra thick,postaction={decorate}] (18.2,7.2) to [out=45,in=-40] (18,14) to [out=140,in=50] (11,13) to [out=-130,in=80] (6.05,8.3);
\draw [ultra thick,postaction={decorate}] (5.95,7.7) to [out=-100,in=100] (7,3) to [out=-80,in=-5] (2.3,-2);
\draw [ultra thick,postaction={decorate}] (1.7,-2) to [out=175,in=5] (-2,-2) to [out=-175,in=-90] (-6,2) to [out=90,in=-175] (-3.3,5);
\draw [ultra thick,postaction={decorate}] (-2.7,5) to [out=5,in=175] (1,3) to [out=-10,in=-170] (6.7,2.95);
\end{scope}

\begin{scope}[shift={(0,-30)}]
\draw [opacity=.2,ultra thick, red] (-4,1) to [out=100,in=-130] (-3,5) to [out=50,in=180] (-1,6);
\draw [opacity=.2,ultra thick, red] (-4,1) to [out=-80,in=140] (-2,-2) to [out=-40,in=150] (0,-3);
\draw [opacity=.2,ultra thick, red] (0,-3) to [out=30,in=-140] (2,-2) to [out=40,in=-90] (4,1);
\draw [opacity=.2,ultra thick, red] (4,1) to [out=150,in=-55] (1,3) to [out=125,in=-60] (-1,6);
\draw [opacity=.2,ultra thick, red] (4,1) to [out=30,in=-125] (7,3) to [out=55,in=-100] (8,6);
\draw [opacity=.2,ultra thick, red] (8,6) to [out=140,in=-55] (6,8) to [out=125,in=-80] (5,10);
\draw [opacity=.2,ultra thick, red] (-1,6) to [out=60,in=-150] (2,10) to [out=30,in=160] (5,10);
\draw [opacity=.2,ultra thick, red] (5,10) to [out=40,in=-175] (11,13) to [out=5,in=140] (15,12);
\draw [opacity=.2,ultra thick, red] (8,6) to [out=20,in=-155] (11,7) to [out=25,in=-150] (13,8);
\draw [opacity=.2,ultra thick, red] (13,8) to [out=30,in=-125] (14,9) to [out=55,in=-100] (15,12);
\draw [opacity=.2,ultra thick, red] (15,12) to [out=20,in=90] (18,7) to [out=-90,in=55] (16,0);
\draw [opacity=.2,ultra thick, red] (16,0) to [out=-125,in=30] (10,-5) to [out=-150,in=-90] (0,-3);

\draw [opacity=.2,fill=blue,blue] (0, -3) circle [radius=0.4];
\draw [opacity=.2,fill=blue,blue] (15, 12) circle [radius=0.4];
\draw [opacity=.2,fill=blue,blue] (-1, 6) circle [radius=0.4];
\draw [opacity=.2,fill=blue,blue] (8, 6) circle [radius=0.4];
\draw [opacity=.2,fill=green,green] (4, 1) circle [radius=0.4];
\draw [opacity=.2,fill=green,green] (-4, 1) circle [radius=0.4];
\draw [opacity=.2,fill=green,green] (5, 10) circle [radius=0.4];
\draw [opacity=.2,fill=green,green] (13, 8) circle [radius=0.4];
\draw [opacity=.2,fill=green,green] (16, 0) circle [radius=0.4];

\draw [red,ultra thick,postaction={decorate}] (4,6) to [out=110,in=-60] (2,10) to [out=120,in=-90] (1,17);
\draw [red,ultra thick,postaction={decorate}] (4,6) to [out=-70,in=145] (7,3) to [out=-35,in=170] (12,3);
\draw [red,ultra thick,postaction={decorate}] (11,10) to [out=175,in=35] (6,8) to [out=-145,in=20] (4,6);
\draw [red,ultra thick,postaction={decorate}] (12,3) to [out=110,in=-65] (11,7) to [out=115,in=-95] (11,10);
\draw [red,ultra thick,postaction={decorate}] (0,2) to [out=30,in=-145] (1,3) to [out=35,in=-160] (4,6);
\draw [red,ultra thick,postaction={decorate}] (1,17) to [out=-150,in=140] (-3,5) to [out=-40,in=120] (0,2);
\draw [red,ultra thick,postaction={decorate}] (12,3) to [out=-130,in=-50] (2,-2) to [out=130,in=-60] (0,2);
\draw [red,ultra thick,postaction={decorate}] (11,10) to [out=-5,in=145] (14,9) to [out=-35,in=50] (12,3);
\draw [red,ultra thick,postaction={decorate}] (1,17) to [out=-30,in=95] (11,13) to [out=-85,in=85] (11,10);
\draw [red,ultra thick,postaction={decorate}] (0,2) to [out=-150,in=50] (-2,-2) to [out=-130,in=-90] (-9,8) to [out=90,in=150] (1,17);
\draw [red,ultra thick,postaction={decorate}] (12,3) to [out=-10,in=180] (18,7) to [out=0,in=-50] (18,17) to [out=130,in=30] (1,17);
\draw [red,ultra thick,postaction={decorate}] (1,17) to [out=90,in=70] (-13,11) to [out=-110,in=140] (-7,-6) to [out=-40,in=-60] (10,-5) to [out=120,in=-70] (12,3);

\draw [fill=red,red] (1,17) circle [radius=0.4];
\draw [fill=red,red] (12,3) circle [radius=0.4];
\draw [fill=red,red] (0,2) circle [radius=0.4];
\draw [fill=red,red] (4,6) circle [radius=0.4];
\draw [fill=red,red] (11,10) circle [radius=0.4];
\end{scope}
\end{tikzpicture}
\caption{A plane bipartite graph $G$, medial graph $M(G)$, the associated special alternating link $L_G$, and the dual graph $G^*$ of $G$ endowed with an Eulerian 
orientation that makes it into an alternating dimap $D$.}
\label{fig:L_G}
\end{figure}

The edges of $M(G)$ also receive orientations by stipulating that the faces of $M(G)$ that surround vertices of $G$ be consistently oriented, while around the boundaries of the other faces of $M(G)$ the orientations alternate.
Notice that this can be achieved (in fact, for a connected $G$, in exactly two ways, which differ by an overall change of orientation) 
because $G$ is bipartite and thus its faces are bounded by an even number of edges.

Thinking of $M(G)$ as a flattening of an oriented link, there are two ways to choose under- and overcrossings at each vertex of $M(G)$ to make it into a diagram of an alternating link $L_G$. 
One of the choices, the one we will prefer, is such that 
each crossing is \emph{positive}, meaning that as we travel along the overcrossing strand, we see the undercrossing strand pass from right to left. Thus our procedure yields a positive special alternating link. Moreover, any non-split positive special alternating link arises from such a construction.

\subsection{The Alexander polynomial in the case of special alternating links} \label{sub:alexpoly} We use a theorem of Murasugi and Stoimenow \cite{even} as the definition of the Alexander polynomial of a special alternating link, as this is the most relevant for the purposes of this paper. We refer the reader to \cite{rolfsen} for a general definition. 

Special alternating links $L_G$ are associated to plane bipartite graphs $G$ as above. Equivalently, they are associated to the planar dual $G^*$ of 
$G$.   
 The planar dual of a bipartite graph is always Eulerian, and moreover, it can be endowed with a natural orientation: whenever an edge of $G^*$ crosses an edge of $G$, we make a convention to keep a fixed color class on our right. Indeed, such an orientation makes $G^*$ into an alternating dimap $D$. Here an \textbf{alternating dimap} is a plane Eulerian digraph, where at each vertex the edges alternate coming in and going out as we move 
 around the vertex. 
See Figure \ref{fig:L_G} for an example. 
 
 Given an Eulerian 
 digraph $D$ and a vertex $r\in\Ver(D)$, a spanning tree of $D$ is a \textbf{$k$-spanning tree} rooted at $r$ if reversing the orientations of exactly $k$ edges produces an oriented spanning tree rooted at $r$. Recall that an \textbf{oriented 
 spanning 
 tree} rooted at $r$ is a subgraph
such that for each vertex $v$, there is a unique path from $v$ to $r$, and this path is directed.  Denote the number of $k$-spanning trees of $D$ rooted at $r$ by $c_k(D,r)$. Define 
 \begin{equation} \label{pdt} P_D(t)=\sum_{k=0}^{\infty} c_k(D, r)\,t^k.\end{equation} 
This polynomial was first introduced by Murasugi and Stoimenow \cite{even}, who showed that for any fixed  Eulerian digraph $D$, which is not necessarily planar, the polynomial $P_D(t)$  does not depend on the root $r$. Moreover, 
 they 
 expressed the Alexander polynomial of a special alternating link purely in terms of alternating dimaps $D$, see Definition/Theorem \ref{alexpoly} below.  This point of view was further investigated in \cite{paper2} from a combinatorial and discrete geometric perspective.  
 
\begin{definition-theorem}\cite[Theorem 2]{even} \label{alexpoly} For an alternating dimap $D$, the polynomial   $P_D(t)$ equals the Alexander polynomial $\Delta_{L_{G}}(-t)$ for the special alternating link  $L_{G}$ associated to the planar dual $G$ of $D$. In other words:
$$\Delta_{L_{G}}(-t) \sim P_D(t).$$
\end{definition-theorem}

\begin{example}
\label{ex:12a_1097}
The Alexander polynomial of the link (in fact, knot) of Figure \ref{fig:L_G} is 
\[\Delta_{L_{G}}(t)=16-54t+77t^2-54t^3+16t^4.\]
It is log-concave as $16\cdot77<54^2<77^2$. From the coefficients we may read off, for instance, that the directed graph $D$ (as well as the bipartite graph $G$) of Figure \ref{fig:L_G} has $217$ spanning trees, among which, with respect to any fixed root, exactly $77$ consist of $2$ incoming and $2$ outgoing edges.
\end{example}

We take Definition/Theorem \ref{alexpoly} as our \textit{definition of the Alexander polynomial} $\Delta_{L_{G}}(t)$ and relate it to the polynomials  $f_{{A}}(t)$ arising from matrices $A$ in Section \ref{sec:cographic}. First however, we need to define  $f_{{A}}(t)$.

\section{Polynomials 
arising from matrices
}
 \label{sec:f_A}

In this section we define our main object of study in this paper, namely the  polynomial $f_{A}$ arising from matrices $A$. We also lay out other necessary concepts for this paper.  The exposition in this section follows the  work   \cite{slicing}. 
 
 \subsection{Oriented matroids arising from matrices} \label{sec:h}
 
  A matrix $A=[a_1^T\mid \cdots\mid a_N^T]$  with   vectors $a_i \in V\cong \R^d$, $i \in [N]$,  is \textbf{flat} if there exists a  
  linear form $h \in V^*$ such that $h(a_i)=1$ for all $i \in [N]$.  A vector arrangement $(a_1, \ldots, a_N)$ 
  in $V$ is \textbf{full dimensional} if it spans $V$.   
 \medskip
 
\noindent \textbf{Assumptions throughout the paper:} The matrix $A=[a_1^T\mid \cdots\mid a_N^T]$  with  vectors $a_i \in V\cong \R^d$, $i \in [N]$,  will be assumed to be   flat and full dimensional in definitions and statements, unless otherwise stated. The    linear form in $V^*$ witnessing the flatness of $A$ will be denoted by $h$ throughout. That is, we will assume that $h(a_i)=1$ for all $i \in [N]$.

 \medskip

 A \textbf{matroid} ${M}$ is an ordered pair $(E, \mathcal{B})$ consisting of a finite set $E$ and a nonempty collection of subsets $\mathcal{B}$ of $E$  such that  
  if $B_1, B_2 \in \mathcal{B}$ and $x \in B_1\setminus B_2$, then there exists    $y \in B_2\setminus B_1$,   such that $(B_1 \cup \{y\}) \setminus \{x\}\in \mathcal{B}.$
 The set $E$ is the \textbf{ground set} of the matroid $M$, while $\mathcal{B}$ is the set of its \textbf{bases}.  It can be shown that the bases of a matroid have the same cardinality.  Bases and all their subsets are called \textbf{independent sets}. The subsets of $E$ that are not  independent are called \textbf{dependent}. A minimal by inclusion dependent set is a \textbf{circuit}. 

In this paper our focus is on oriented matroids arising from matrices $A$. 
 The   \textbf{matroid} $M[A]$ associated to  $A$ is defined to be on the ground set $[N]$. Its bases are the sets $\{i_1, \ldots, i_d\} \subset [N]$ such that  the set of vectors $\{a_{i_1}, \ldots, a_{i_d}\}$ is a linear basis of $\R^d$. For any circuit $C$ of the matroid $M[A]$ there is a unique, up to rescaling, linear dependence \begin{equation} \label{C} \sum_{i \in C} \lambda_i a_i=0,\end{equation} with $\lambda_i\neq 0$ for all $i \in C$.  Set $\lambda_i=0$ for $i \in [N]\setminus C$. Collecting the values of $\lambda_i$, which depend on $C$, we define the vector $\lambda_C=(\lambda_i)_{i \in [N]} \in \R^N$. 
 This is determined up to rescaling by a non-zero real factor. 
 
 For every circuit $C$ of $M[A]$, choose a linear dependence as in equation \eqref{C}. Once we fix this linear dependence, we obtain an orientation of each circuit $C$. Namely, an \textbf{oriented circuit} $C$ is an ordered subdivision of $C$ into $C=C^+ \sqcup C^-$, where
 \begin{equation} C^+=\{i \in C\mid \lambda_i>0\} \text{ and }  C^-=\{i \in C\mid \lambda_i<0\}. 
 \end{equation}
  
  By orienting each circuit of the matroid $M[A]$ as above, it becomes an \textbf{oriented matroid}. Note that our choice of orientation of each circuit $C$ depended on \eqref{C} only up to a switch between $ C^+$ and  $C^-$.
  
  
   \subsection{External semi-activity of the bases of an oriented matroid}

  The \textbf{secondary hyperplane arrangement} $\h_{A}^{sec}=\{H_C\}_{C \text{ a circuit of } M[A]}$ is the arrangement of hyperplanes $H_C$ in $\R^N$, defined for each circuit $C$ of $M[A]$ by 
  \begin{equation} H_C=\{x \in \R^N \mid (\lambda_C, x)=0\}.\end{equation}
  A vector $x \in \R^N$ is called \textbf{generic} (with respect to $\h_{A}^{sec}$) if $x$ lies in the interior of a top dimensional face (so called \emph{chamber}) of the arrangement $\h_{A}^{sec}$ --- in other words, if $x$ is on none of the hyperplanes.

  We can use a generic vector 
  with respect to  $\h_{A}^{sec}$ to induce an orientation of $M[A]$.  
 Namely, such a generic vector $\rho \in  \R^N$ determines the choice of $\lambda_C$, this time up to rescaling by a \emph{positive} real factor, via  requiring that $(\lambda_C, \rho)>0$ for every circuit $C$  of $M[A]$. Note that  $(\lambda_C, \rho)\neq 0$ since $\rho$ is generic, and thus, we can always pick a $\lambda_C$ such that  $(\lambda_C, \rho)>0$ for every circuit $C$.  
 Changing $\rho$ to $\rho'$, where $\rho$ and $\rho'$ are in adjacent top dimensional faces of $\h_{A}^{sec}$ separated by a hyperplane $H_C$, corresponds to flipping the orientation of the circuit $C$, that is, to multiplying \eqref{C} by a negative scalar for that circuit $C$. Note here that a given chamber and a given hyperplane of $\h_{A}^{sec}$ are not necessarily incident (along an $(N-1)$-dimensional polyhedron), that is, if we wish to flip the orientation of exactly one circuit, our choices are typically limited.
  
  \begin{definition} \label{def:ext} Fix a generic vector $\rho \in \R^N$  with respect to $\h_{A}^{sec}$. It determines the orientation of each circuit $C$ of $M[A]$ by choosing $\lambda_C$ such that  $(\lambda_C, \rho)>0$. For a base $B \in M[A]$, we say that $j \in [N]\setminus B$ is \textbf{externally semi-active} with respect to the base $B$ if for the unique circuit $C= C^+\sqcup C^- \subset B\cup \{j\}$, we have $j \in C^+$.
Denote by $\Ext_{\rho}(B)$ the set of externally semi-active elements $j \in [N]\setminus B$ with respect to the base $B$.  Let \begin{equation} \ext_{\rho}(B)=|\Ext_{\rho}(B)|\end{equation} be the \textbf{external semi-activity} of $B$.
\end{definition}

\begin{remark}\label{rem:order}
    A special case of the above construction is obtained by
choosing a linear order on the ground set $[N]$ (say, the order $1 < 2
<  \cdots < N$)
and picking the vector $\rho = (\epsilon, \epsilon^2, \dots, \epsilon^N)$,
where $\epsilon>0$ is a sufficiently small number.   In this case,
each circuit $C$
is oriented as $C = C^+ \sqcup C^{-}$ so that $C^+$ contains the
minimal element of $C$.
For a base $B$, the element $j\in [N]\setminus B$ is externally semi-active with
respect to $B$ if,
for the unique circuit $C = C^+ \sqcup C^{-} \subset B\cup\{j\}$,
both the minimal element of $C$ and $j$ belong to the same part $C^+$. This also explains our choice of terminology: in the `usual' notion of (external) activity, going all the way back to Tutte in the graph case and Crapo in the matroid case, the element inducing the circuit is required to \emph{be} smallest in order to be active. Here, we only require our external elements to \emph{be in the same class with} the smallest element of their circuits, and call them semi-active when they are.
\end{remark}

\subsection{Volume form and unimodular arrangements} Let  $\Vol$ be the volume form on $V\cong \R^d$ such that we may identify $V$ with $\R^d$ so that $\Vol$ is the
usual volume form on $\R^d$, and the vectors  $a_i$ become row $d$-vectors with
$$\Vol(\Pi_B)= \left|{\rm det}[a_{i_1}^T\mid \cdots \mid a_{i_d}^T]\right|
$$ for any base  
 $B=\{i_1, \ldots, i_d\}$ of $M[A]$ and the induced parallelepiped $\Pi_B:=\sum_{{i_j} \in B}[0,a_{i_j}]$. The arrangement  $a_1, \ldots, a_N \in V\cong \R^d$ is \textbf{unimodular} if for every base $B$ of $M[A]$ we have that $\Vol(\Pi_B)=1$. 

 Note that a full dimensional vector arrangement in $\R^d$  
 is {unimodular} if its vectors are integral and for any subset of vectors  $S$ that form a basis of $\R^d$, the determinant of the $d \times d$ matrix that contains the transposes of the vectors of $S$ as its columns is $\pm 1$.  
We call the matrix $A=[a_1^T\mid \cdots\mid a_N^T]$,    $a_i \in \R^d$, $i \in [N]$, full dimensional, respectively unimodular, if the vector arrangement $(a_1, \ldots, a_N)$   is  full dimensional,  respectively unimodular. A matrix $A$ is \textbf{totally unimodular} if all its minors are $0$ or $\pm 1$.

\subsection{Definition and invariance of the polynomials $ f_{{A}}(t)$}

In  \cite{slicing} Li and Postnikov introduce the following polynomial, which is the central object of study in this paper:

  \begin{definition} \label{def:f_A} \cite{slicing}  Let $a_1, \ldots, a_N \in V\cong \R^d$ be a flat collection of vectors. Fix a generic vector $\rho \in \R^N$  with respect to $\h_{A}^{sec}$. Define the polynomial $f_{{A}}(t)$ by
  \begin{equation}\label{A} f_{{A}}(t)=\sum_{B} t^{\ext_{\rho}(B)} \Vol(\Pi_B),\end{equation} where the sum is over all bases $B$ of $M[A]$.  \end{definition}

\begin{theorem} \label{thm:f_A}\cite{slicing} The polynomial $f_{{A}}(t)$ does not depend on the choice of generic vector $\rho \in \R^N$.
\end{theorem}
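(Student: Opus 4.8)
The plan is to show that $f_A(t)$ is unchanged when we pass from one generic vector $\rho$ to an adjacent one $\rho'$, where $\rho$ and $\rho'$ lie in top-dimensional faces of $\h_A^{sec}$ separated by a single hyperplane $H_C$. Since the complement of $\h_A^{sec}$ is connected through chamber-adjacencies (any two chambers are joined by a gallery), invariance in general follows by induction on the number of separating hyperplanes along such a gallery. So the entire content is in the single-wall-crossing step.

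Fix such a wall $H_C$ for a circuit $C$ of $M[A]$. Crossing from $\rho$ to $\rho'$ flips the orientation of exactly the circuit $C$: we replace $\lambda_C$ by $-\lambda_C$, swapping $C^+$ and $C^-$, while the orientation $\lambda_{C'}$ of every other circuit $C'$ is preserved (because $(\lambda_{C'},\rho)$ and $(\lambda_{C'},\rho')$ have the same sign — $\rho,\rho'$ are on the same side of $H_{C'}$). The key observation is then that for a base $B$, the value $\ext_\rho(B)$ can only change on account of elements $j \in [N]\setminus B$ whose fundamental circuit in $B\cup\{j\}$ is exactly $C$. Now for a fixed circuit $C$ with $C = C^+\sqcup C^-$ (say with respect to $\rho$), the bases $B$ for which $C$ is the fundamental circuit of some $j\notin B$ are exactly the sets $B = (C\setminus\{i\})\cup F$ where $i\in C$ and $F$ is a fixed-cardinality independent extension disjoint from $C$; here $j$ must also lie in $C$, so $j\in C\setminus\{i\}$, i.e. $B\cup\{j\}\supseteq C$ with $B = (C\cup F)\setminus\{i\}$, $j = i' \in C\setminus\{i\}$. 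The effect of the wall-crossing is to move, within each such family, the external semi-activity count between the "$i\in C^+$" and "$i\in C^-$" cases. The strategy is to group the relevant bases into pairs (or into a single combinatorial block indexed by $C$ together with the extension $F$ and the pair $\{i, i'\}\subseteq C$) and show that the total contribution $\sum t^{\ext_\rho(B)}\Vol(\Pi_B)$ over each block is symmetric under swapping $C^+\leftrightarrow C^-$.

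Concretely, within a block one fixes the extension $F$ disjoint from $C$ and considers all bases of the form $B_i = (C\setminus\{i\})\cup F$ for $i\in C$. For such a $B_i$ and an element $j = i'\in C\setminus\{i\}$, the fundamental circuit is $C$ itself, and $i'$ is externally semi-active with respect to $B_i$ iff $i'$ and $i$ lie in the same part of the oriented circuit $C$ (this is where Remark~\ref{rem:order}'s reformulation is the cleanest guide: $i'$ is semi-active iff $i'$ and the "pivot" $i$ are on the same side). Meanwhile $\Vol(\Pi_{B_i})$ is, up to the common factor coming from $F$, the absolute value of the coefficient $|\lambda_i|$ in the dependence $\sum_{k\in C}\lambda_k a_k = 0$ — this is the standard Cramer's-rule identity relating the volumes of the simplices $C\setminus\{i\}$ to the circuit coefficients. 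Swapping $C^+\leftrightarrow C^-$ does not change any $|\lambda_i|$, hence does not change any $\Vol(\Pi_{B_i})$; it only relabels which $i'$ are counted. One then checks, by a direct bijective/algebraic matching of exponents within the block, that $\sum_{i\in C} t^{\ext_\rho(B_i)}\,|\lambda_i|$ is invariant under the swap — e.g. by a sign-reversing pairing of terms where the count differs, or by writing the block sum as a product of $q$-numbers associated to $|C^+|$ and $|C^-|$ (the two "halves" of the circuit) times a symmetric prefactor, which is manifestly unchanged when the halves are interchanged. Summing over all blocks (all choices of $C$-fundamental-circuit data and extension $F$) and adding the bases whose relevant fundamental circuits are untouched gives $f_A^\rho(t) = f_A^{\rho'}(t)$.

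The main obstacle is the bookkeeping in the wall-crossing step: correctly identifying which bases are affected, verifying that the elements $j$ whose semi-activity status flips are precisely those with fundamental circuit $C$, and then packaging the affected bases into blocks whose block-sums are demonstrably symmetric. The algebraic heart — that the volumes within a block are governed by the $|\lambda_i|$ and that these are swap-invariant — is routine Cramer's rule; the combinatorial heart — that the exponent $\ext_\rho$ redistributes within a block in a way that leaves the generating function fixed — is where one must be careful, and a clean way to see it is to reduce, block by block, to the elementary identity that the distribution of "number of elements of $C\setminus\{i\}$ on the same side as $i$" over $i\in C$, weighted by $|\lambda_i|$, is symmetric under the involution $C^+\leftrightarrow C^-$.
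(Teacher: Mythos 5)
Your high-level strategy matches the paper's stated proof idea — reduce to a single wall-crossing via a gallery argument and track which bases are affected when a circuit orientation flips. But the execution contains both a bookkeeping error and a substantive gap. The bookkeeping error: for a base $B_i=(C\setminus\{i\})\cup F$, the only $j\notin B_i$ whose fundamental circuit is $C$ is $j=i$ itself (since $B_i\cup\{j\}\supseteq C$ forces $j=i$), not $j=i'\in C\setminus\{i\}$ as you write; those $i'$ belong to $B_i$ and play no role in $\ext_\rho(B_i)$. Consequently the semi-activity criterion is simply ``$i\in C^+$'', not ``$i'$ on the same side as a pivot $i$''. Your closing reduction to ``the distribution of `number of elements of $C\setminus\{i\}$ on the same side as $i$' over $i\in C$, weighted by $|\lambda_i|$'' is therefore a statement about a fictional statistic, not about $\ext_\rho$.

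The substantive gap: you assert that the block sum $\sum_{i\in C}t^{\ext_\rho(B_i)}|\lambda_i|$ is invariant under swapping $C^+\leftrightarrow C^-$ by ``a direct bijective/algebraic matching'', treating it as a purely combinatorial identity. It is not. Writing $\ext_\rho(B_i)=e_i+[i\in C^+]$, where $e_i$ collects the contributions of elements $j\notin C\cup F$, the invariance one actually needs is $\sum_{i\in C^+}t^{e_i}|\lambda_i|=\sum_{i\in C^-}t^{e_i}|\lambda_i|$, and the $e_i$ vary with $i$ because the fundamental circuits of those $j$'s vary with the base. This identity can genuinely fail when the sign pattern ``flip only $C$'' does not correspond to a chamber of $\h_A^{sec}$: for instance with $d=2$, $N=4$, $a_1=(1,1)$, $a_2=(-1,1)$, $a_3=(0,1)$, $a_4=(2,1)$ and $\rho=(4,2,1,1/2)$, flipping only $C=\{1,2,3\}$ is infeasible (the would-be opposite chamber is empty), and the block sum $2+t+t^2$ would change to $1+3t$, while flipping $C=\{2,3,4\}$, which is feasible, leaves the block sum $3t+3t^2$ unchanged. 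The paper explicitly flags this subtlety (``a given chamber and a given hyperplane of $\h_A^{sec}$ are not necessarily incident''), and a correct proof must use the geometry of the adjacent chamber somewhere. Your argument never does, and so as stated it would ``prove'' invariance under arbitrary orientation flips, which is false.
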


\noindent \textit{Proof idea:}   Theorem \ref{thm:f_A} is proven in \cite{slicing}  by verifying that  when a generic vector $\rho \in \R^N$ with respect to $\h_{A}^{sec}$ crosses a hyperplane $H_C$ for some circuit $C$, the polynomial $f_{A}(t)$ remains unchanged. 
\qed

\begin{remark}
If we replace $\rho$ with $-\rho$, then $C^-$ and $C^+$ trade places for all circuits $C$. This yields $\Ext_{-\rho}(B)=[N]\setminus B\setminus\Ext_\rho(B)$ for all bases $B$. In particular because $|B|$ is the same for all bases, this implies that $f_A(t)$ is a palindromic polynomial for all flat arrangements $A$.
\end{remark}

\begin{remark}\label{rem:oriented} We note that $f_A$ in Definition \ref{def:f_A} depends on the oriented matroid $M[A]$. Moreover, if $A$ and $B$ are both flat unimodular matrices yielding isomorphic oriented matroids  $M[A]$ and $M[B]$, then $f_A=f_B$. We will sometimes signify this by denoting $f_A$ by $f_{M[A]}$ instead. \end{remark}

\section{Flat matrices arising from graphic and cographic matroids}
\label{sec:graph-cograph}

In this section we study when the matrix presentation $A$ of a graphic or cographic matroid is flat. This is necessary in order for the polynomial $f_{A}$ to be defined. 
  For a thorough introduction to matroid theory, consult \cite{oxley}. For further information on oriented matroids, we refer the reader to  \cite{Oriented-matroids}. 

\subsection{Flat graphic  matroids} \label{sec:graph} The \textbf{graphic matroid} $M_G$  of the connected graph $G$ has ground set the edges of $G$ and bases given by all the spanning trees of $G$. 
One way to realize $M_G$ as a vector matroid is by fixing a reference orientation of $G$, which we denote by $D$, and taking  the (signed) \textbf{incidence matrix} $I(D)$ of $D$ to represent $M_G$. That is, after identifying $V(G)$ with $[|V(G)|]=\{1,\ldots,|V(G)|\}$, label  the columns of $I(D)$ by the edges $e$ of $G$, and let the column vector 
labeled by $e$ be $({\bf e}_i-{\bf e}_j)^T$ if
the edge $e$ points from vertex $i$ to vertex $j$ in $D$ and the ${\bf e}_i$ are the corresponding unit standard coordinate vectors in $\R^{|\Ver(G)|}$. It is well known that  the incidence matrix of a directed graph  is totally unimodular, thus we have obtained a totally unimodular matrix presentation of $M_G$. See Figure \ref{fig:inc} for an example.

\begin{figure}
\centering
\begin{tikzpicture}[scale=.4] 
    \tikzstyle{o}=[circle,scale=0.6,fill,draw]
    	\node[o, label=below:{\small $v_1$}] (1) at (0, 0) {};
        \node[o, label=left:{\small $v_2$}] (2) at (-4, 1) {};
        \node[o, label=above:{\small $v_3$}] (3) at (-2,4) {};
        \node[o, label=above:{\small $v_4$}] (4) at (2, 4) {};
        \node[o, label=right:{\small $v_5$}] (5) at (4, 1) {};
            \path[->,>=Stealth]
    		(1) edge node [below] {\small $e_1$} (2)
    		(1) edge node [right] {\small $e_2$} (3)
            (3) edge node [above] {\small $e_3$} (4)
    		(1) edge node [below] {\small $e_4$} (5);
            \path[->,>=Stealth]
    		(2) edge node [left] {\small $e_5$} (3)
    		(4) edge node [right] {\small $e_6$} (1)
    		(4) edge node [right] {\small $e_7$} (5);
\node at (17,2.5) {\small $I(D)=\bordermatrix
{~&\text{\footnotesize $e_1$}&\text{\footnotesize $e_2$}&\text{\footnotesize $e_3$}&\text{\footnotesize $e_4$}&\text{\footnotesize $e_5$}&\text{\footnotesize $e_6$}&\text{\footnotesize $e_7$}\cr
\text{\footnotesize $v_1$}&1&1&0&1&0&-1&0\cr
\text{\footnotesize $v_2$}&-1&0&0&0&1&0&0\cr
\text{\footnotesize $v_3$}&0&-1&1&0&-1&0&0\cr
\text{\footnotesize $v_4$}&0&0&-1&0&0&1&1\cr
\text{\footnotesize $v_5$}&0&0&0&-1&0&0&-1\cr}$};
\end{tikzpicture}
\caption{On the left is a directed graph $D$. 
On the right is its signed incidence matrix $I(D)$.}
\label{fig:inc}
\end{figure}
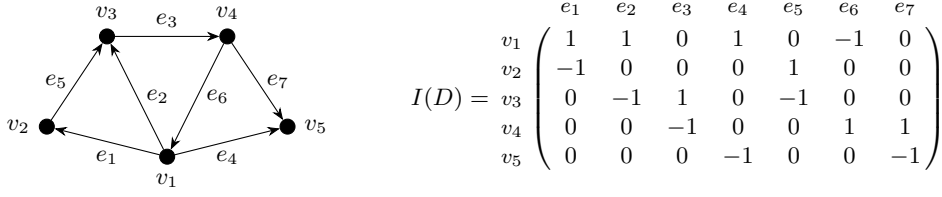

The above totally unimodular presentation of $M_G$ is very natural, but  the matrix $I(D)$ has $|\Ver(G)|$ many rows, yet is of rank  $|\Ver(G)|-1$ for a connected graph $G$. Indeed, the sum of 
the row vectors of $I(D)$ is $0$. One can  give a different totally unimodular presentation of $M_G$ via a full rank matrix of the form $[I_{|V(G)|-1}\mid M]$, where $I_{|\Ver(G)|-1}$ is the $(|V(G)|-1)\times (|\Ver(G)|-1)$ identity matrix, and $M$ is of size $(|\Ver(G)|-1) \times (|\Edg(G)|-|\Ver(G)|+1)$.  Moreover, the matrix $[I_{|\Ver(G)|-1}\mid M]$ constructed below has the exact same linear dependences among its columns as $I(D)$ does. (This is because it is in fact obtained from $I(D)$ by Gaussian elimination followed by deleting a trivial row.) As such, the oriented matroids $M[I(D)]$ and $M[I_{|\Ver(G)|-1}\mid M]$ are isomorphic and the polynomials $f_{I(D)}$ and $f_{[I_{|\Ver(G)|-1}\mid M]}$ are equal, as we pointed out in  
Remark \ref{rem:oriented}.

\begin{definition}
    \label{def:graph}
Fix a spanning tree $T$ of $G$, and choose edge labels so that  $\Edg(T)=\{e_1, \ldots, e_{|\Ver(G)|-1}\}$. 
This time, denote by ${\bf e}_i$ the unit coordinate vectors in $\R^{|\Ver(G)|-1}$ with one $1$ in position $i$ and all other coordinates $0$. 
Let the columns of 
the matrix $A_{D,T}$ be 
labeled by the edges of $G$. The first $|\Ver(G)|-1$ columns are labeled by $e_1, \ldots, e_{|\Ver(G)|-1}$ and they contain the column vectors ${\bf e}_1^T, \ldots, {\bf e}_{|\Ver(G)|-1}^T$, that is, we may write $A_{D,T}=[I_{|\Ver(G)|-1}\mid M]$. For each edge $e \in \Edg(G)\setminus \Edg(T)$, there is a unique cycle (called the \emph{fundamental cycle} of $e$ with respect to $T$) $C(T,e)$ contained in $T \cup \{e\}$. Traverse this cycle $C(T,e)$, whose edges in cyclic order are $e, e_{i_1}, \ldots, e_{i_k}$,  in the direction determined by $e \in D$ and assign $\alpha_{i_j}$ to be $-1$ or $1$ according to whether the edge $e_{i_j}$ is traversed in a way agreeing with its direction or disagreeing with its direction. Set the column vector of $M$ labeled by the edge $e$ to be $\sum_{j=1}^k \alpha_{i_j}{\bf e}_{i_j}^T.$ The thus obtained matrix $A_{D,T}=[I_{|\Ver(G)|-1} \mid M]$ represents $M_G$. 
We refer to the column of $A_{D,T}$ labeled by the edge $e$ as $v_e$.
\end{definition}

\begin{figure}
\centering
\begin{tikzpicture}[scale=.4] 
    \tikzstyle{o}=[circle,scale=0.6,fill,draw]
    	\node[o, label=below:{\small $v_1$}] (1) at (0, 0) {};
        \node[o, label=left:{\small $v_2$}] (2) at (-4, 1) {};
        \node[o, label=above:{\small $v_3$}] (3) at (-2,4) {};
        \node[o, label=above:{\small $v_4$}] (4) at (2, 4) {};
        \node[o, label=right:{\small $v_5$}] (5) at (4, 1) {};
            \path[->,>=stealth,ultra thick]
    		(1) edge node [below] {\small $e_1$} (2)
    		(1) edge node [right] {\small $e_2$} (3)
            (3) edge node [above] {\small $e_3$} (4)
    		(1) edge node [below] {\small $e_4$} (5);
            \path[->,>=Stealth]
    		(2) edge node [left] {\small $e_5$} (3)
    		(4) edge node [right] {\small $e_6$} (1)
    		(4) edge node [right] {\small $e_7$} (5);
\node at (17,2.5) {\small $A_{D,T}=
\begin{pNiceMatrix}
[first-row,first-col]
\CodeBefore
       \columncolor{red!15}{5-7}
     \Body
     &\text{\footnotesize $e_1$}&\text{\footnotesize $e_2$}&\text{\footnotesize $e_3$}&\text{\footnotesize $e_4$}&\text{\footnotesize $e_5$}&\text{\footnotesize $e_6$}&\text{\footnotesize $e_7$}&\\
     \text{\footnotesize $e_1$}&\,1&0&0&0&-1&0&0&{}\\
     \text{\footnotesize $e_2$}&\,0&1&0&0&1&-1&-1&{}\\
     \text{\footnotesize $e_3$}&\,0&0&1&0&0&-1&-1&{}\\
     \text{\footnotesize $e_4$}&\,0&0&0&1&0&0&1&{}\\
\end{pNiceMatrix}
$};
\end{tikzpicture}
\caption{On the left is a directed graph $D$ along with a spanning tree $T$. On the right is the matrix $A_{D,T}$ of Definition \ref{def:graph}, containing the highlighted block $M$.}
\label{fig:graphical}
\end{figure}
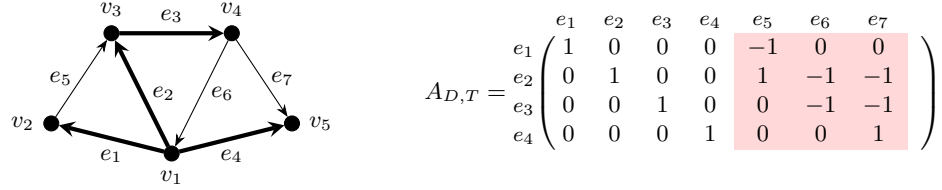

See Figure \ref{fig:graphical} for an example of this construction. 
We are interested in the case when the matrix $A_{D,T}$ is flat (unlike in Figure \ref{fig:graphical}), since that is when $f_{A_{D,T}}$ is defined. 
The following well-known characterization of bipartite graphs holds the key:

\begin{lemma} \label{lem:bip}
    A graph $G$ is bipartite if and only if every cycle of $G$ has even length.
\end{lemma}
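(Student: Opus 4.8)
The plan is to prove the two implications separately, with the forward direction being immediate and the reverse direction relying on the standard distance-parity coloring, phrased here in the fundamental-cycle language already set up above.

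For the ``only if'' direction, suppose $G$ is bipartite with vertex classes $X$ and $Y$. Traversing any cycle, consecutive vertices lie in opposite classes, so after returning to the starting vertex we have taken an even number of steps; hence every cycle has even length. For the ``if'' direction, assume every cycle of $G$ has even length. We may treat each connected component separately, so assume $G$ is connected. Fix a vertex $r$ and a spanning tree $T$ of $G$, and color each vertex $v$ by $c(v) \in \{0,1\}$, the parity of the distance $d_T(r,v)$ from $r$ to $v$ in $T$. For an edge $e \in \Edg(T)$, its two endpoints have $T$-distances from $r$ differing by exactly $1$, so $c$ assigns them different colors. For an edge $e = uv \in \Edg(G) \setminus \Edg(T)$, consider the fundamental cycle $C(T,e)$, which consists of $e$ together with the unique $u$--$v$ path $P_{uv}$ in $T$. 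By hypothesis $|C(T,e)|$ is even, so $P_{uv}$ has odd length; since $d_T(r,u) + d_T(r,v) \equiv d_T(u,v) \pmod 2$ in a tree (the two sides differ by $2\,d_T(r,w)$, where $w$ is the vertex at which the $r$--$u$ and $r$--$v$ tree paths last agree), we conclude $c(u) \ne c(v)$. Thus $c$ is a proper $2$-coloring and $G$ is bipartite.

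There is no genuinely hard step here; the one place that needs a moment's care is the parity bookkeeping in the reverse direction that converts ``$C(T,e)$ has even length'' into ``the endpoints of the chord $e$ receive opposite colors'', together with the reduction to the connected case. An alternative route would replace the spanning tree by a BFS layering from $r$ and derive a contradiction from a monochromatic edge by splicing two shortest paths to $r$ into a closed walk of odd length, which then contains an odd cycle (by a short induction on walk length); I prefer the spanning-tree argument above since it sidesteps that lemma and reuses the fundamental-cycle setup of Definition~\ref{def:graph} verbatim.
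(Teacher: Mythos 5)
The paper states Lemma~\ref{lem:bip} as a well-known characterization of bipartite graphs and gives no proof, so there is nothing in the paper to compare against. Your argument is correct: the forward direction by alternation of classes along a cycle is immediate, and the reverse direction via parity of tree-distance from a root $r$, together with the identity $d_T(r,u)+d_T(r,v)\equiv d_T(u,v)\pmod 2$ applied to fundamental cycles, is a clean and standard way to produce the $2$-coloring. Your choice to phrase it in terms of a spanning tree and fundamental cycles is a nice fit with the surrounding text, which is exactly the mechanism used in the proof of the next lemma (Lemma~\ref{lem}), where the observation that every cycle is a mod~$2$ sum of fundamental cycles again does the work. The one thing worth making explicit, which you handle correctly, is the reduction to the connected case; beyond that the proof is complete.
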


\begin{lemma} \label{lem} If the  matrix $A_{D,T}$ is   flat then  $G$ must be a bipartite graph.
\end{lemma}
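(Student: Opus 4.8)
The plan is to unwind what flatness means for $A_{D,T}$ in terms of the combinatorics of $G$, and then invoke Lemma~\ref{lem:bip}. Recall that $A_{D,T} = [I_{|\Ver(G)|-1}\mid M]$, where the columns of $I_{|\Ver(G)|-1}$ are the standard basis vectors ${\bf e}_1^T,\dots,{\bf e}_{|\Ver(G)|-1}^T$ indexed by the tree edges, and the column $v_e$ for a non-tree edge $e$ is $\sum_{j=1}^k \alpha_{i_j}{\bf e}_{i_j}^T$, where the $\alpha_{i_j}\in\{\pm1\}$ record whether each tree edge $e_{i_j}$ of the fundamental cycle $C(T,e)$ is traversed with or against its orientation. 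Flatness says there is a linear form $h\in(\R^{|\Ver(G)|-1})^*$ with $h(v_e)=1$ for \emph{every} edge $e$ of $G$.

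First I would use the identity columns to pin down $h$ completely: since $h({\bf e}_i^T)=1$ for each $i\in\{1,\dots,|\Ver(G)|-1\}$, the form $h$ must be $h(x_1,\dots,x_{|\Ver(G)|-1}) = x_1+\cdots+x_{|\Ver(G)|-1}$, i.e.\ the all-ones covector. Next, for a non-tree edge $e$ with fundamental cycle $C(T,e)$, apply this $h$ to $v_e$: we get $h(v_e)=\sum_{j=1}^k \alpha_{i_j}$. So the flatness condition becomes exactly
\[
\sum_{j=1}^k \alpha_{i_j} = 1 \qquad \text{for every non-tree edge } e.
\]
The third step is to interpret this sum as a length count for the fundamental cycle $C(T,e)$. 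Traversing $C(T,e)$ in the direction dictated by $e$, the edge $e$ itself contributes $+1$ more step; so the total signed contribution of the tree edges, which is $\sum_j \alpha_{i_j}$, together with the $+1$ from $e$, must be adjusted against the cycle length. More precisely, among the $k$ tree edges exactly those with $\alpha_{i_j}=+1$ are traversed \emph{against} their reference orientation and those with $\alpha_{i_j}=-1$ \emph{with} it (reading off the sign convention in Definition~\ref{def:graph}), so if $p$ of them have $\alpha=+1$ and $q=k-p$ have $\alpha=-1$ then $\sum_j\alpha_{i_j}=p-q$ and $k=p+q$, hence the condition $p-q=1$ forces $k=p+q \equiv p-q = 1 \pmod 2$, i.e.\ $k$ is odd, so the cycle $C(T,e)$ has length $k+1$ which is even.

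Finally I would promote this from fundamental cycles to all cycles. Every cycle of $G$ is a symmetric difference (in the cycle space over $\mathbb{F}_2$) of fundamental cycles with respect to $T$, and symmetric difference of even-size edge sets stays even; since each fundamental cycle is even by the previous step, every element of the cycle space has even size, and in particular every genuine cycle of $G$ has even length. By Lemma~\ref{lem:bip}, $G$ is bipartite.

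The main obstacle I anticipate is purely bookkeeping: getting the sign convention in Definition~\ref{def:graph} exactly right so that the equation $\sum_j\alpha_{i_j}=1$ translates correctly into a parity statement about the cycle length. One has to be careful that the ``$+1$'' coming from the edge $e$ and the signs $\alpha_{i_j}$ are measured consistently along a single traversal of $C(T,e)$; a cleaner way to see it, avoiding the $p,q$ casework, is to observe that $v_e + {\bf e}$ (in the ambient $\R^{|\Edg(G)|}$ where ${\bf e}$ is the coordinate of the non-tree edge) is, up to signs, the signed indicator of the cycle $C(T,e)$, and $h$ applied to this signed indicator must equal $|C(T,e)|\bmod 2$ when one replaces real signs by their images mod $2$ — but this last reduction step is exactly the delicate point and is worth spelling out carefully rather than waving at.
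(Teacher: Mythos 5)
Your proposal is correct and follows essentially the same route as the paper: use the identity block to force $h$ to be the all-ones covector, translate flatness of the $M$-block into $\sum_j\alpha_{i_j}=1$, deduce that fundamental cycles have even length (the paper leaves the parity bookkeeping implicit, you spell it out), extend to all cycles via the $\mathbb{F}_2$ cycle space, and conclude by Lemma~\ref{lem:bip}. The sign-convention worry you raise at the end is moot: since each $\alpha_{i_j}\in\{\pm1\}$, the relation $\sum_j\alpha_{i_j}=1$ forces $k$ to be odd regardless of which orientation gets $+1$, so the cycle length $k+1$ is even.
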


 \begin{proof} Note that if there is a linear form $h$ such that $h(v_e)$=1 for all edges $e$ in $G$, then since $A_{D,T}$ is of the form $[I_{|\Ver(G)|-1} \mid M]$, 
 in particular we have $h({\bf e}_i)=1$ for all $i=1,\ldots,|V(G)|-1$. Then as to the columns of the block $M$, the requirement becomes $\sum_{j=1}^k \alpha_{i_j}=1$, which tells us that 
 every cycle of the form $C(T,e)$, for $e \in \Edg(G)\setminus \Edg(T)$, must have an even number of edges. This in turn implies (by the fact that any cycle in $G$ is the modulo $2$ sum of the fundamental cycles of those of its edges that are not in $T$) that every cycle of $G$ must have an even number of edges. Therefore, by Lemma \ref{lem:bip}, $G$ must be bipartite. 
\end{proof}

\begin{lemma} \label{lem:?} Let $G$ be a bipartite graph  on the vertex bipartition $V_1\sqcup V_2$. Let $D$ be the reference orientation where all edges point from their vertex in $V_1$ to their vertex in $V_2$. Then the matrix $A_{D,T}=[I_{|\Ver(G)|-1} \mid M]$ from Definition \ref{def:graph} is flat, as is the incidence matrix $I(D)$.
    \end{lemma}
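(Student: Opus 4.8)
The plan is to exhibit the witnessing linear form $h$ explicitly. Since the graph $G$ is bipartite with bipartition $V_1 \sqcup V_2$ and every edge of $D$ points from $V_1$ to $V_2$, the incidence matrix $I(D)$ has column vector $({\bf e}_i - {\bf e}_j)^T$ for an edge $e$ with $i \in V_1$ and $j \in V_2$. So I would define $h \in (\R^{|\Ver(G)|})^*$ by $h(x) = \sum_{i \in V_1} x_i - \sum_{j \in V_2} x_j$; then $h$ evaluates to $1$ on every column of $I(D)$, proving $I(D)$ is flat. (Full dimensionality of $I(D)$ fails in the naive sense since it has rank $|\Ver(G)|-1$; but per the conventions of the excerpt one works in the hyperplane spanned by the columns, or equivalently one passes to $A_{D,T}$ below.)

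For $A_{D,T} = [I_{|\Ver(G)|-1} \mid M]$, I would argue in one of two equivalent ways. The direct way: define $h'$ on $\R^{|\Ver(G)|-1}$ by $h'({\bf e}_i) = 1$ for each $i = 1, \ldots, |\Ver(G)|-1$, i.e. $h'(x) = \sum_i x_i$. On the first $|\Ver(G)|-1$ columns (the identity block) $h'$ is clearly $1$. On the column $v_e$ labeled by $e \in \Edg(G)\setminus\Edg(T)$, we have $v_e = \sum_{j=1}^k \alpha_{i_j}{\bf e}_{i_j}^T$ where $e, e_{i_1}, \ldots, e_{i_k}$ is the fundamental cycle $C(T,e)$; hence $h'(v_e) = \sum_{j=1}^k \alpha_{i_j}$. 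It remains to show this sum equals $1$. Travelling around the even cycle $C(T,e)$ in the direction of $e$, the signs $\alpha_{i_j} \in \{\pm 1\}$ record agreement/disagreement of each tree edge with the traversal direction; because every edge of $G$ (hence of the cycle) points from $V_1$ to $V_2$, consecutive edges along the cycle alternate between "traversed forward" and "traversed backward", so the $\alpha_{i_j}$ alternate in sign. The cycle $C(T,e)$ has even length $k+1$, so among its $k$ tree edges the forward/backward pattern, started right after $e$, gives exactly one more $+1$ than $-1$ (or one would check the parity bookkeeping: $k$ is odd, and the alternating sequence of length $k$ beginning with the sign opposite to how $e$ sits sums to $\pm 1$; the orientation convention on $\alpha$ pins down the sign as $+1$). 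Thus $h'(v_e) = 1$ and $A_{D,T}$ is flat. Alternatively, one can simply invoke Lemma \ref{lem}'s proof in reverse: the computation there shows $A_{D,T}$ is flat iff $\sum_j \alpha_{i_j} = 1$ for every fundamental cycle, and for the all-$V_1$-to-$V_2$ orientation this holds because each such cycle is even and the $\alpha$'s alternate.

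The only genuinely delicate point is the sign/parity bookkeeping showing $\sum_{j=1}^k \alpha_{i_j} = 1$ rather than some other odd value — one must check both that consecutive $\alpha_{i_j}$ alternate in sign (this is where bipartiteness and the uniform $V_1 \to V_2$ orientation enter) and that the leading sign is $+1$ (this is where the specific traversal convention "$+1$ if disagreeing, $-1$ if agreeing" from Definition \ref{def:graph}, together with $e$ itself pointing $V_1 \to V_2$, gets used). A clean way to organize this is: label the cycle vertices $w_0, w_1, \ldots, w_k, w_0$ along the traversal starting from the tail $w_0 \in V_1$ of $e$; then $w_0, w_2, w_4, \ldots \in V_1$ and $w_1, w_3, \ldots \in V_2$, the edge between $w_{\ell-1}$ and $w_\ell$ is traversed forward iff $\ell$ is odd, and the sign contributed is $(-1)^{\ell}$ for $\ell = 1, \ldots, k$... wait, we must exclude $e$ itself (the $w_0 w_1$ edge, or the $w_k w_0$ edge depending on indexing); doing this carefully the remaining $k$ signs sum telescopically to $1$. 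I expect this to be a short but slightly fiddly parity argument, and it is the main (minor) obstacle; everything else is the immediate observation that $h = \sum_{V_1} - \sum_{V_2}$ works for $I(D)$ and $h' = \sum(\cdot)_i$ works for $A_{D,T}$.
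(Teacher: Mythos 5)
Your argument for the flatness of $A_{D,T}$ is correct and essentially matches the paper, which simply asserts that a direct check verifies $h'(v_e)=1$ for $h'(x)=\sum_i x_i$; your more detailed parity analysis (consecutive $\alpha_{i_j}$ alternate, the tree-path of odd length $k$ gives $(k+1)/2$ plus-ones and $(k-1)/2$ minus-ones, netting $1$) is a valid way to discharge that check, even if you phrase it tentatively.

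However, your proposed linear form for $I(D)$ is wrong. You set $h(x)=\sum_{i\in V_1}x_i - \sum_{j\in V_2}x_j$; on a column $({\bf e}_i-{\bf e}_j)^T$ with $i\in V_1$, $j\in V_2$ this evaluates to $1-(-1)=2$, not $1$. The paper instead takes $h(x)=\sum_{v\in V_1}x_v$ (summing \emph{only} over $V_1$), which gives $1-0=1$ on each column. Your form is off by a factor of $2$, so it witnesses the columns lying in an affine hyperplane, but not the hyperplane $\{h=1\}$ as the definition of flatness requires; the fix is immediate (drop the $V_2$ sum, or rescale by $1/2$), but as written the claim "$h$ evaluates to $1$ on every column of $I(D)$" is false.
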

    
\begin{proof} A direct check show that for $h(x_1, \ldots, x_{|V(G)|-1})=x_1+\cdots+x_{|\Ver(G)|-1}$ and for every column $v_e$ of $A_{D,T}$ we have $h(v_e)$=1. The flatness of $I(D)$ is witnessed by the linear form $h(x_1, \ldots, x_{|\Ver(G)|})=\sum_{v \in V_1}x_v.$ 
\end{proof}

\begin{remark} \label{rem:graph}  Note that the oriented matroid $M[I(D)]$ is dependent on the digraph $D$ alone. Since   $M[I(D)]$ and $M[A_{D,T}]$ are isomorphic oriented matroids, we observe that the oriented matroid $M[A_{D,T}]$ does not depend on the choice of the tree $T$. For this reason, in the situation of Lemma \ref{lem:?}, --- that is, when $D$ is the reference orientation of the bipartite graph $G$ with vertex bipartition $V_1\sqcup V_2$ in which all edges are directed from their vertex in $V_1$ to their vertex in $V_2$  --- we will denote the corresponding polynomial $f_{A_{D,T}}$ by $f_{M_{D}}$ to emphasize that the polynomial depends on the oriented graphic matroid of $D$ alone. 
\end{remark}

\begin{remark}\label{rem:semibalanced}
It is easy to see that the flatness of $I(D)$, and the flatness of $A_{D,T}$ for any $T$, are equivalent. For a given bipartite graph $G$, the orientation of Lemma \ref{lem:?} is not the only one for which this is the case. According to \cite{arithm_symedgepoly}, a necessary and sufficient condition is that the vertices of $G$ be sorted to levels labeled by integers in such a way that the head of each edge is one level lower
than its tail. Then the linear extension, to $\R^{|\Ver(G)|}$, of such a `level' functional is a linear form that witnesses the flatness of $I(D)$. It is not hard to show, cf.\ \cite[Theorem 2.6]{semibalanced}, that another equivalent description of these orientations is that along any cycle of $G$, the edges that $D$ orients in the two opposite directions be equinumerous. K\'alm\'an and T\'othm\'er\'esz call these orientations \emph{semi-balanced}, and refer to the one in Lemma \ref{lem:?} as the \emph{standard orientation} of a bipartite graph. In the latter case there are only two levels and if we label them by $0$ and $1$, we end up with the functional in the proof of Lemma \ref{lem:?}.
Signed incidence matrices of semi-balanced, but not standard, orientations of bipartite graphs provide important test cases for the Question that we posed at the end of the Introduction.
\end{remark}

\subsection{Flat  cographic  matroids} \label{sec:matroiddefs} 
 The \textbf{cographic matroid} $M_G^*$ of a connected graph $G$ has ground set the edges of $G$ and  bases given by  the complements of the spanning trees of $G$. It is the matroid dual of the graphic matroid $M_G$. When the graph $G$ is planar, it is well known that $M_G^*=M_{G^*}$, where $G^*$ denotes the graph dual of a plane graph $G$. 

 In the cographic case we do not have an analogue of the incidence matrix presentation of graphic matroids, due to the fact that 
 the cycle space lacks 
 a canonical set of generators. Matrix presentations of the cographic matroid can be constructed by making some additional choices, such as fixing a spanning tree. Indeed, from the  totally unimodular presentation of the graphic matroid given in Definition \ref{def:graph}, one can obtain a totally unimodular matrix   realizing the cographic matroid via matroid duality \cite[Theorem 2.2.8]{oxley}. Instead of explaining the construction in the aforementioned way, we present it via feasible flows. This emphasizes the underlying fact that the fundamental cycles of any fixed tree provide a basis for the cycle space.

 Fix, then, a spanning tree $T$, as well as a reference orientation $D$ of the graph $G$.
 Recall that a \textbf{feasible flow} on the edges of the digraph $D$ is an assignment of real numbers $\{f_e\}_{e \in\Edg(D)}$ to the edges of $D$ so that the total netflow at each vertex (the sum of the flow values on the outgoing edges from the vertex minus the sum of the flows on the incoming edges to the vertex) 
is $0$. 
 Note here that the \emph{sum} of the netflows is always $0$, by which if the feasibility condition holds at any $|\Ver(G)|-1$ vertices, then it also holds at the last remaining vertex. 
 
 Recall also that the \textbf{fundamental cut}  in $G$ of an edge $d$ with respect to the spanning tree $T$ of $G$, where $T$ contains $d$,  denoted by 
  $C^*(T,d)$, is the cut connecting the vertices of the two components of $T$ with the edge $d$ removed. 
We can uniquely express any feasible  flow $\{f_e\}_{e\in\Edg(G)}$ in terms of  $\{f_e\}_{e \in\Edg(G)\setminus \Edg(T)}$  as in the following lemma: 

\begin{lemma} \label{lem:flow} Fix a spanning tree $T$ of a connected digraph $D$. Let  $\{f_e\}_{e \in\Edg(D)\setminus \Edg(T)}$ be arbitrary flow values assigned to 
the edges $e \in\Edg(D)\setminus \Edg(T)$. Then there is a unique way to extend this partial flow to a feasible flow on $D$. Namely, for an edge $d \in\Edg(T)$  its flow is determined as follows. The removal of $d$
partitions the vertex set of $T$, and therefore that of $D$, into $V_1 \sqcup V_2$. Then we have
\begin{equation} \label{eq:fd}f_d=\sum_{e\text{ opposite to } d}f_e\;-\sum_{e\text{ parallel to } d} f_e,\end{equation}  where the first  summation is over the edges of the fundamental cut $C^*(T,d)$
which point in the opposite direction between $V_1$ and $V_2$ to that of $d$, whereas the second summation is over the edges of the cut 
which point in the same direction between $V_1$ and $V_2$ as $d$.
\end{lemma}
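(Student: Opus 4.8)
The plan is to split the claim into existence and uniqueness of the extension, and then to read off formula~\eqref{eq:fd} from a single cut-balance identity.

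For uniqueness, I would argue that if $f$ and $f'$ are feasible flows on $D$ agreeing on every non-tree edge, then $g=f-f'$ is a feasible flow supported on $\Edg(T)$; since $T$ is a forest, $g$ must vanish, as one sees by repeatedly deleting a leaf of $T$ (the feasibility condition at a leaf forces the flow on its unique incident tree edge to be $0$, after which one deletes that edge and iterates). For existence, I would use that the feasible flows on $D$ are precisely the kernel of the incidence matrix $I(D)$ from Section~\ref{sec:graph}, i.e.\ the cycle space of $D$, which has dimension $|\Edg(D)|-|\Ver(D)|+1=|\Edg(D)\setminus\Edg(T)|$ for connected $D$, with the fundamental cycles $C(T,e)$, $e\in\Edg(D)\setminus\Edg(T)$, oriented as in Definition~\ref{def:graph} (so that $e$ is traversed along its direction in $D$), as a basis. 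Regarded as a flow, such a fundamental cycle has value $1$ on $e$, value $0$ on every other non-tree edge, and values $\pm1$ on its tree edges; hence $\sum_{e\in\Edg(D)\setminus\Edg(T)}f_e\cdot(\text{flow of }C(T,e))$ is a feasible flow with the prescribed non-tree values. (Combined with the uniqueness paragraph, this also shows that restriction to the non-tree edges is a linear isomorphism from the space of feasible flows onto $\R^{\Edg(D)\setminus\Edg(T)}$.)

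To obtain the explicit formula, fix a tree edge $d$, let $V_1\sqcup V_2$ be the vertex partition induced by $T-d$, and assume $d$ is directed from $V_1$ to $V_2$ (the opposite case merely interchanges the two sums). Summing the ``netflow $=0$'' equations over all vertices of $V_1$, every edge with both endpoints in $V_1$ cancels, leaving the cut-balance identity
\begin{equation*}
\textstyle\sum_{e\colon V_1\to V_2}f_e=\sum_{e\colon V_2\to V_1}f_e,
\end{equation*}
the sums running over the edges of the fundamental cut $C^*(T,d)$ oriented in the indicated way. Since $T-d$ is a spanning forest with components $V_1$ and $V_2$, the only tree edge crossing this cut is $d$ itself, so all other edges in the two sums are non-tree edges; solving for $f_d$ gives exactly~\eqref{eq:fd}, with the ``opposite to $d$'' edges being those crossing from $V_2$ to $V_1$ and the ``parallel to $d$'' edges those crossing from $V_1$ to $V_2$.

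I do not expect a genuine obstacle here: the only delicate point is keeping the orientation conventions consistent --- matching the signs of the fundamental-cycle flows and of the ``parallel''/``opposite'' edges of $C^*(T,d)$ with the conventions of Definition~\ref{def:graph} and of the statement --- and the entire combinatorial content is the cut-balance identity displayed above, which is flow/cut duality applied to a fundamental cut.
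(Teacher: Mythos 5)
Your proof is correct, but the route differs from the paper's in an interesting way worth noting. Both arguments split into existence, uniqueness, and derivation of formula~\eqref{eq:fd}, and both ultimately get~\eqref{eq:fd} from the cut-balance identity for the fundamental cut $C^*(T,d)$. Where you diverge is in how you handle existence. The paper constructs the extension directly by leaf peeling: fix a root $r$, determine the flow on the tree-leaves farthest from $r$ (forced by feasibility at a leaf), delete them, and iterate; feasibility is then guaranteed at every vertex except $r$, and it holds at $r$ automatically because the netflows always sum to zero. You instead prove existence by exhibiting the extension as the explicit linear combination $\sum_{e\notin\Edg(T)} f_e\cdot(\text{flow of } C(T,e))$ of fundamental-cycle flows, using that the cycle space is the kernel of $I(D)$ and that each fundamental cycle carries value $1$ on its own non-tree edge and $0$ on the others. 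You then use (a version of) the paper's leaf-peeling idea only for uniqueness, showing that a flow supported on a tree vanishes. Your approach is more structural, tying the lemma to the ambient linear-algebra picture of Section~\ref{sec:graph} (incidence matrix, cycle space, fundamental cycles already introduced in Definition~\ref{def:graph}); the paper's is more elementary and self-contained, requiring only the netflow condition. Both are complete, and both naturally establish the parenthetical remark you make that restriction to non-tree edges is a linear isomorphism from feasible flows onto $\R^{\Edg(D)\setminus\Edg(T)}$.
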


\begin{proof} Since the flow over any cut in $D$ must be $0$,  it is clear that if there exists a flow extending the values $\{f_e\}_{e \in\Edg(D)\setminus \Edg(T)}$ to a feasible flow on $D$, then it must take the flow values given in \eqref{eq:fd}. Thus to prove the lemma it suffices to show that we can always extend 
the partial flow $\{f_e\}_{e \in\Edg(D)\setminus \Edg(T)}$ to a feasible flow on $D$. We fix an arbitrary root vertex $r\in\Ver(D)$ and start by determining the flows on the leaves of $T$ that are farthest away from $r$. After this, the remaining edges on which the flow is still not determined constitute a smaller tree. We continue by determining the flow on the `outer' leaves and repeat the process as long as there is any edge with no flow value. At the end we have a guaranteed netflow of $0$ at every vertex except $r$, but by that, the condition also automatically holds at $r$. Therefore the desired extension exists.
\end{proof}

Our presentation of the cographic matroid $M^*_G$ is as follows. See Figure \ref{fig:B_{D,T}} for an example. 

\begin{definition} \label{def:cograph} Fix a spanning tree $T$ of the connected graph $G$ and fix a reference orientation $D$ for $G$. Let $\{e_{|\Ver(G)|}, \ldots, e_{|\Edg(G)|}\}$ be the edges in $\Edg(G)\setminus \Edg(T)$.  We define the $(|\Edg(G)|-|\Edg(T)|)\times |\Edg(G)|$ matrix $B_{D,T}=[K \mid I_r]$, where $r=|\Edg(G)|-|\Ver(G)|+1$,  whose columns are labeled by the edges of $D$.  The last $r$ columns of $B_{D,T}$ are labeled by $e_{|\Ver(G)|}, \ldots, e_{|\Edg(G)|}$ and constitute the $r \times r$ identity matrix $I_r$.   For each edge $d \in \Edg(T)$, the fundamental cut $C^*(T,d)$ contains the edge $d$ and some edges from $\{e_{|V(G)|}, \ldots, e_{|\Edg(G)|}\}$. Let  $\alpha_{i}$, $i \in \{{|\Ver(G)|}, \ldots, {|\Edg(G)|}\}$, be $0$ if $e_i$ is not in the cut $C^*(T,d)$. Else let  $\alpha_{i}$ 
be $1$ or $-1$ 
according to whether the edge $e_{i}$ is opposite or parallel to $d$ in $C^*(T,d)$.  Set the column vector of the block $K$ labeled by the edge $d$ to be $w_d=\sum_{i=|\Ver(G)|}^{|\Edg(G)|} \alpha_{i}{\bf e}_i.$   We will refer to all columns of $B_{D,T}$, labeled by the edge $d$, by $w_d$. 
\end{definition}

\begin{figure}
\centering
\begin{tikzpicture}[scale=.4] 
    \tikzstyle{o}=[circle,scale=0.6,fill,draw]
    	\node[o, label=below:{\small $v_1$}] (1) at (0, 0) {};
        \node[o, label=left:{\small $v_2$}] (2) at (-4, 1) {};
        \node[o, label=above:{\small $v_3$}] (3) at (-2,4) {};
        \node[o, label=above:{\small $v_4$}] (4) at (2, 4) {};
        \node[o, label=right:{\small $v_5$}] (5) at (4, 1) {};
            \path[->,>=stealth,ultra thick]
    		(1) edge node [below] {\small $e_1$} (2)
    		(1) edge node [right] {\small $e_2$} (3)
            (3) edge node [above] {\small $e_3$} (4)
    		(1) edge node [below] {\small $e_4$} (5);
            \path[->,>=Stealth]
    		(2) edge node [left] {\small $e_5$} (3)
    		(4) edge node [right] {\small $e_6$} (1)
    		(4) edge node [right] {\small $e_7$} (5);
\node at (17,2.5) {\small $B_{D,T}=
\begin{pNiceMatrix}
[first-row,first-col]
\CodeBefore
       \columncolor{blue!15}{2-5}
     \Body
     &&\text{\footnotesize $e_1$}&\text{\footnotesize $e_2$}&\text{\footnotesize $e_3$}&\text{\footnotesize $e_4$}&\text{\footnotesize $e_5$}&\text{\footnotesize $e_6$}&\text{\footnotesize $e_7$}\\
     \text{\footnotesize $e_5$}&&1&-1&0&0&1&0&0\\
     \text{\footnotesize $e_6$}&&0&1&1&0&0&1&0\\
     \text{\footnotesize $e_7$}&&0&1&1&-1&0&0&1\\
\end{pNiceMatrix}$};
\end{tikzpicture}
\caption{On the left is a directed graph $D$ along with a fixed spanning tree $T$. On the right is the matrix $B_{D,T}$ of Definition \ref{def:cograph}, with its block $K$ highlighted.}
\label{fig:B_{D,T}}
\end{figure}
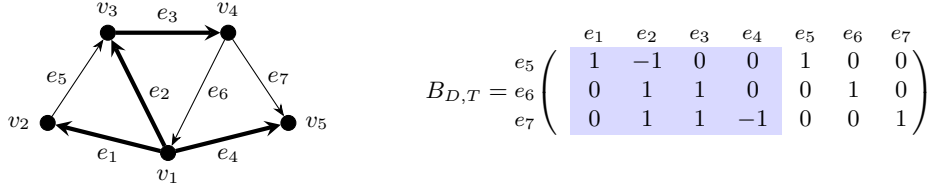

\begin{theorem}\label{thm:BDT} The matrix $B_{D,T}$ is totally unimodular and represents the cographic matroid $M^*_G$.
\end{theorem}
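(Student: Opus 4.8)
The plan is to reduce both assertions to the corresponding facts about the graphic presentation $A_{D,T}=[I_{|\Ver(G)|-1}\mid M]$ of Definition~\ref{def:graph}, via the standard description of matroid duality for representable matroids: if $[I_k\mid D]$ represents a matroid $N$, then $[-D^T\mid I_{n-k}]$ represents $N^*$, and this operation preserves total unimodularity (see \cite[Theorem~2.2.8]{oxley}). So the one thing I really need to establish is the identity
\[
B_{D,T}=[-M^T\mid I_r],\qquad r=|\Edg(G)|-|\Ver(G)|+1,
\]
viewed as $r\times|\Edg(G)|$ matrices whose columns are labeled by $\Edg(G)$, with the identity block sitting in the columns of the non-tree edges $e_{|\Ver(G)|},\dots,e_{|\Edg(G)|}$ in both. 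Granting this, $B_{D,T}$ is totally unimodular because $A_{D,T}$ is (Definition~\ref{def:graph}), hence so is its submatrix $M$, hence so are $M^T$, $-M^T$ and $[-M^T\mid I_r]$; and $B_{D,T}$ represents $(M_G)^*=M^*_G$ by the duality recipe applied to $A_{D,T}$, which represents $M_G$.

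To prove the displayed identity I would not compare entries one by one, but use the observation that a matrix of the shape ``$r\times|\Edg(G)|$ with an $I_r$ block occupying a prescribed set of $r$ columns'' is \emph{uniquely} determined by its row space, since its $i$-th row is the only vector of the row space whose coordinates on those $r$ columns equal ${\bf e}_i$. Hence it suffices to show $B_{D,T}$ and $[-M^T\mid I_r]$ have the same row space, and I claim both equal the cycle space of $D$ (the kernel of $I(D)$, i.e.\ the space of feasible flows on $D$), which is the orthogonal complement in $\R^{\Edg(G)}$ of the cut space (the row space of $I(D)$). For $[-M^T\mid I_r]$: the row space of $A_{D,T}$ is the cut space, since $A_{D,T}$ is obtained from $I(D)$ by row operations and deletion of a dependent row; and since $[I_{|\Ver(G)|-1}\mid M]\,[-M^T\mid I_r]^T=-M+M=0$, the $r$-dimensional row space of $[-M^T\mid I_r]$ lies in, hence equals, the orthogonal complement of the cut space. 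For $B_{D,T}$: for each non-tree edge $e_i$ let $z_i$ be the feasible flow on $D$ equal to $1$ on $e_i$ and to $0$ on every other non-tree edge; by Lemma~\ref{lem:flow} such $z_i$ exist, are unique, and form a basis of the space of feasible flows, i.e.\ of the cycle space. Comparing the flow formula \eqref{eq:fd}, which computes the value of $z_i$ on a tree edge $d$ as a signed sum over the fundamental cut $C^*(T,d)$, with Definition~\ref{def:cograph}, which defines the entry of $B_{D,T}$ in row $e_i$ and column $d$ by the same signed count over $C^*(T,d)$, one sees that the $i$-th row of $B_{D,T}$ is precisely $z_i$. Thus both row spaces equal the cycle space, the identity follows, and with it the theorem.

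The only genuine input is Lemma~\ref{lem:flow} --- that the fundamental circuits of $T$ form a basis of the cycle space and that restricting a feasible flow to the non-tree edges is an isomorphism onto $\R^{r}$ --- and that is already proved; the rest is bookkeeping. The point to be careful about is purely a matter of conventions: the orientation set-up in Definition~\ref{def:cograph} (which side of $C^*(T,d)$ counts as $V_1$ versus $V_2$ relative to the direction of $d$ in $D$) must be chosen so that ``opposite'' and ``parallel'' there have literally the same meaning as in \eqref{eq:fd}. Once that is arranged, identifying the rows of $B_{D,T}$ with the flows $z_i$ is forced, and I anticipate no further difficulty.
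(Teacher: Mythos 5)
Your proof is correct, and it takes a genuinely different (and in one respect more complete) route than the paper's. The paper establishes the matroid claim directly: it argues that dependent column sets of $B_{D,T}$ are exactly the edge sets containing a bond, using Lemma~\ref{lem:flow} in one direction and the netflow-over-a-cut identity in the other. For total unimodularity it then invokes duality and \emph{asserts without proof} that $[I_{|\Edg(G)|-r}\mid -K^T]$ equals the matrix $A_{D,T}$ of Definition~\ref{def:graph}. You instead isolate and prove that identity, in the form $B_{D,T}=[-M^T\mid I_r]$, and then read off both assertions from it via the standard duality recipe. Your row-space argument for the identity is tight: a matrix with an $I_r$ block in a prescribed column set is determined by its row space; the rows of $B_{D,T}$ are the fundamental flows $z_i$ by comparing \eqref{eq:fd} with Definition~\ref{def:cograph}, so by Lemma~\ref{lem:flow} the row space is the cycle space; and $[I\mid M]\begin{pmatrix}-M\\ I_r\end{pmatrix}=0$ plus a dimension count identifies the row space of $[-M^T\mid I_r]$ with the same cycle space (here you correctly use the paper's own remark that $A_{D,T}$ arises from $I(D)$ by row operations, so its row space is the cut space). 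This buys you a unified proof, and it also fills the small gap the paper leaves open in its TU argument; the trade-off is that you lose the paper's more concrete, flow-theoretic characterization of the circuits of $B_{D,T}$. The one step you leave implicit --- that padding a totally unimodular block with an identity block preserves total unimodularity --- is a standard fact (cofactor-expand any minor along the identity columns) and is fine to cite.
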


\begin{proof} The cographic matroid is well known to be isomorphic to the bond matroid of $G$. The circuits of the bond matroid of $G$ are the minimal edge cuts and they determine the matroid. Since a feasible flow  always has netflow $0$ over a cut, it follows that all minimal edge cuts of $G$ correspond to linearly dependent columns in  $B_{D,T}$. On the other hand, if a set of columns is dependent in $B_{D,T}$, it must contain a minimal cut. Indeed, assuming otherwise, the columns would correspond to a set of edges whose complement in $\Edg(D)$ contains a tree. However, by Lemma \ref{lem:flow} these are always linearly independent. 

To see that $B_{D,T}$ is totally unimodular we invoke matroid duality \cite[Theorem 2.2.8]{oxley}.  $B_{D,T}=[K\mid I_r]$ is totally unimodular if and only if the matrix representing its dual $[I_{|\Edg(G)|-r}\mid -K^T]$  is totally unimodular. The matrix $[I_{|\Edg(G)|-r}\mid -K^T]$ is the one   given in  Definition \ref{def:graph}. The latter is shown to be totally unimodular in \cite[p. 139]{oxley}. 
\end{proof}

 Next we investigate when the matrix $B_{D,T}$ is flat, since that is the case when $f_{B_{D,T}}$ is well-defined.
 The  answer relies on the following well-known characterization of Eulerian graphs: 

\begin{lemma} \label{lem:euler}
    A graph $G$ is Eulerian if and only if for every partition $V_1\sqcup V_2$ of the vertex set of $G$ the number of edges of $G$  that have one vertex in   $V_1$ and the other vertex in  $V_2$ is even.
\end{lemma}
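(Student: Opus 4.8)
The plan is to invoke the standard characterization that a graph is Eulerian exactly when every vertex has even degree, and then to derive the stated cut condition by a short double-counting argument in each direction.

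First I would handle the forward implication. Assuming $G$ is Eulerian, fix any partition $V(G) = V_1 \sqcup V_2$. Summing the degrees of the vertices of $V_1$ counts each edge with both ends in $V_1$ twice and each edge of the cut exactly once, so
$$\sum_{v \in V_1} \deg(v) = 2\,e(V_1) + e(V_1, V_2),$$
where $e(V_1)$ is the number of edges with both endpoints in $V_1$ and $e(V_1, V_2)$ is the number of edges crossing the cut. The left-hand side is a sum of even numbers, hence even, and $2\,e(V_1)$ is even; therefore $e(V_1, V_2)$ is even.

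For the converse, assuming every cut of $G$ has an even number of edges, I would apply the hypothesis, for a given vertex $v$, to the partition into $\{v\}$ and $V(G) \setminus \{v\}$. The edges crossing this cut are precisely the non-loop edges incident to $v$, and since each loop at $v$ contributes $2$ to $\deg(v)$ while crossing no cut, the number of crossing edges has the same parity as $\deg(v)$. By hypothesis it is even, so $\deg(v)$ is even; as $v$ was arbitrary, $G$ is Eulerian. I do not anticipate any real obstacle here: the lemma is classical and the argument is a one-line parity computation each way. The only point needing a moment's care is the treatment of loops in the converse direction — and if the graphs under consideration are taken to be loopless, that subtlety disappears entirely. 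It is worth noting the parallel with Lemma \ref{lem:bip}: both results convert a "local" parity condition (on degrees, respectively on cycle lengths) into a "global" parity condition on all cuts, respectively all cycles, of $G$.
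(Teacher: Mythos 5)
The paper states this lemma as a well-known fact and gives no proof of its own, so there is nothing to compare against at the level of approach; your argument is the standard one and it is correct. Both directions are the right parity double-count: forward via $\sum_{v\in V_1}\deg(v) = 2e(V_1) + e(V_1,V_2)$, and backward by specializing the cut hypothesis to $\{v\}$ versus its complement. Your observation that loops need a word is fair, although the graphs in this paper (planar duals of simple bipartite plane graphs, and later grid networks) are loopless, so it is a non-issue here.

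One small point worth flagging: you invoke "Eulerian $\iff$ all degrees even," which is really Euler's theorem and holds under the hypothesis that $G$ is connected. For a disconnected graph with all degrees even (e.g.\ two disjoint triangles), the right-hand cut condition holds while there is no Eulerian circuit. In the paper's context $G$ is always a connected graph (it is the planar dual of a connected plane graph, or more generally a connected graph equipped with a spanning tree), so this does not affect anything, but if you wanted the lemma to be literally correct as a free-standing statement you would either add a connectivity hypothesis or read "Eulerian" as "every vertex has even degree." Your parallel to Lemma~\ref{lem:bip} is apt: that lemma has the analogous implicit caveat (it is about cycles, where connectivity is irrelevant), and both convert a local parity condition into a global one over the cycle space, respectively the cut space.
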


\begin{lemma} Let $G$ be a  graph, $D$ a reference orientation of $G$ and $T$ an arbitrary spanning tree of $D$. If the matrix $B_{D,T}$ is flat,  then  $G$ is   Eulerian. \end{lemma}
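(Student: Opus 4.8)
This mirrors the graphic case (Lemma~\ref{lem}), so the plan is to run the parallel argument on the cut side. Suppose $h$ is a linear form on $\R^r$ witnessing the flatness of $B_{D,T}=[K\mid I_r]$, so $h(w_e)=1$ for every column $w_e$. Restricting to the last $r$ columns, which form $I_r$, we get $h({\bf e}_i)=1$ for each $i\in\{|\Ver(G)|,\dots,|\Edg(G)|\}$; that is, $h$ is just the sum-of-coordinates functional. Then for $d\in\Edg(T)$ the condition $h(w_d)=1$ becomes $\sum_i \alpha_i = 1$, where the $\alpha_i\in\{0,\pm1\}$ record, for each non-tree edge $e_i$ lying in the fundamental cut $C^*(T,d)$, whether $e_i$ is oriented oppositely ($+1$) or parallel ($-1$) to $d$ across that cut.

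The next step is to translate $\sum_i\alpha_i=1$ into a parity statement about the size of the cut. The fundamental cut $C^*(T,d)$ consists of $d$ itself together with those non-tree edges $e_i$ with $\alpha_i\neq 0$; write $p$ for the number of these with $\alpha_i=+1$ and $q$ for the number with $\alpha_i=-1$. Then $\sum_i\alpha_i = p-q = 1$, while the total number of edges in the cut is $1+p+q$. Since $p-q=1$ forces $p+q$ to be odd, the cut $C^*(T,d)$ has an even number of edges. So flatness of $B_{D,T}$ implies that every fundamental cut with respect to $T$ has even size.

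To finish, I would upgrade "every fundamental cut is even" to "every edge cut is even," which is exactly the Eulerian criterion of Lemma~\ref{lem:euler}. This is the cographic mirror of the fact used in Lemma~\ref{lem} that every cycle is a mod-$2$ sum of fundamental cycles: here every edge cut (equivalently, every element of the cut space/bond space over $\F_2$) is a symmetric difference of fundamental cuts $C^*(T,d)$, $d\in\Edg(T)$, since these form a basis of the cut space. A symmetric difference of even-size sets has even size modulo $2$ — more precisely, $|A\triangle B|=|A|+|B|-2|A\cap B|$, so parity is additive — hence every cut of $G$ is even, and Lemma~\ref{lem:euler} gives that $G$ is Eulerian.

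The only genuinely non-routine point is the claim that the fundamental cuts $\{C^*(T,d)\}_{d\in\Edg(T)}$ span the cut space over $\F_2$, together with the bookkeeping that the parity of $|C^*(T,d)|$ is governed by $p+q$ rather than by the signs individually; both are standard, the former being the cographic counterpart of the fundamental-cycle basis used in the proof of Lemma~\ref{lem}. I would state the spanning fact with a one-line reason (each edge cut, restricted to $T$, is determined by which tree edges it contains, and $C^*(T,d)$ is the unique fundamental cut containing $d$ among tree edges) rather than proving it from scratch, and otherwise the argument is a direct transcription of the graphic case.
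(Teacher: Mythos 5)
Your proof is correct and follows the same route as the paper's: flatness forces $h$ to be the all-ones functional, which forces each fundamental cut to have even size (your $p-q=1 \Rightarrow p+q$ odd step is exactly the paper's unstated parity check), and then the fact that the fundamental cuts generate the cut space over $\mathbb{F}_2$ extends this to all cuts, invoking Lemma~\ref{lem:euler}. No meaningful divergence from the paper's argument.
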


 \begin{proof} Note that if there is a linear form $h$ such that $h(w_d)$=1 for all edges $d$ in $G$, then since  $B_{D,T}$ is of the form $[K\mid I_r]$, it must be that  $h({x_1, \ldots, x_r})=x_1+\cdots+x_r$ in order to make the last $r$ columns satisfy $h(w_d)=1$. Note also that in order for the columns of $K$ to also have all coordinates sum to $1$, every cut of the form $C^*(T,d)$, for $d \in \Edg(T)$, must have an even number of edges. This in particular implies (because any cut in $G$ is the modulo $2$ sum of the fundamental cuts of the edges that constitute the cut's intersection with $T$) that every  cut of  $G$ 
 must have an even number of edges. Therefore, by Lemma \ref{lem:euler}, $G$ must be Eulerian. 
\end{proof}

\begin{lemma} \label{lem:eu} Given an Eulerian graph $G$ and an Eulerian reference orientation $D$ of it, the matrix $B_{D,T}$ from Definition \ref{def:cograph} is flat.
    \end{lemma}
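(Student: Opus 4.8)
The plan is to exhibit the linear form witnessing flatness and verify it column by column, reducing everything to a balance identity for Eulerian orientations. First I would recall the argument already used in the proof of the preceding lemma: since $B_{D,T}=[K\mid I_r]$ contains the standard basis vectors $\mathbf{e}_{|\Ver(G)|},\dots,\mathbf{e}_{|\Edg(G)|}$ as its last $r$ columns, any linear form $h$ with $h(w_d)=1$ for all $d$ must equal $h(x)=x_{|\Ver(G)|}+\cdots+x_{|\Edg(G)|}$, the sum of all coordinates (coordinates being indexed by the non-tree edges). Conversely this $h$ automatically satisfies $h(w_{e_i})=1$ for each of the $r$ non-tree columns, so the whole statement reduces to showing $h(w_d)=1$ for every tree edge $d\in\Edg(T)$.

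Next I would unwind Definition \ref{def:cograph} for a fixed tree edge $d$. Removing $d$ from $T$ splits $T$ into two components; let $V_1\sqcup V_2$ be the induced partition of $\Ver(G)$, with $d$ oriented from $V_1$ to $V_2$ in $D$. Since $T\setminus\{d\}$ is a spanning forest with exactly these two components, $d$ is the \emph{only} tree edge in the fundamental cut $C^*(T,d)$; all other edges of $C^*(T,d)$ are non-tree edges. By the definition of $w_d$, the value $h(w_d)=\sum_i\alpha_i$ equals the number of non-tree edges of $C^*(T,d)$ pointing from $V_2$ to $V_1$ (the ``opposite'' ones, each contributing $+1$) minus the number pointing from $V_1$ to $V_2$ (the ``parallel'' ones, each contributing $-1$).

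The key step is the cut-balance property of Eulerian orientations: for the partition $V_1\sqcup V_2$ above, the number of $D$-edges directed from $V_1$ to $V_2$ equals the number directed from $V_2$ to $V_1$. Indeed, summing the local balance condition $\odeg_D(v)=\operatorname{indeg}_D(v)$ over all $v\in V_1$, every edge internal to $V_1$ contributes $1$ to each side and cancels, leaving exactly $|\{\text{edges }V_1\to V_2\}|=|\{\text{edges }V_2\to V_1\}|$; call this common value $q$. Counting $d$ itself among the $V_1\to V_2$ edges, the non-tree edges of $C^*(T,d)$ oriented $V_2\to V_1$ number $q$, while those oriented $V_1\to V_2$ number $q-1$. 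Hence $h(w_d)=q-(q-1)=1$, which establishes the flatness of $B_{D,T}$ with witnessing form $h$.

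I do not expect a serious obstacle here: the one point demanding care is matching the sign convention of Definition \ref{def:cograph} (``opposite'' versus ``parallel'') correctly against the $V_1\to V_2$ and $V_2\to V_1$ counts, and keeping track of the fact that $d$ is the unique tree edge in its fundamental cut so that exactly one of the $V_1\to V_2$ edges is excluded from the sum. The conceptual content is entirely the Eulerian cut-balance identity, which is immediate from summing the vertex balance condition over one side of the cut.
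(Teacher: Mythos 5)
Your proof is correct and follows the same approach as the paper's, which simply asserts that a ``direct check'' verifies $h(w_d)=1$ for $h(x_1,\dots,x_r)=x_1+\cdots+x_r$. You have filled in exactly what that direct check consists of: the observation that $d$ is the unique tree edge in $C^*(T,d)$, together with the Eulerian cut-balance identity, yields $h(w_d) = q - (q-1) = 1$, and your sign-tracking against Definition~\ref{def:cograph} is accurate.
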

    
\begin{proof} A direct check shows that for $h({x_1, \ldots, x_r})=x_1+\cdots+x_r$, and every column $w_d$ of $B_{D,T}$, we have $h(w_d)=1$.
\end{proof}

\begin{remark} \label{rem:cograph} According to Lemma \ref{lem:eu}  the polynomial $f_{B_{D,T}}$ is defined for all Eulerian digraphs $D$. We note that the oriented matroid $M[B_{D,T}]$ is the dual of the oriented matroid $M[A_{D,T}]$, which by Remark \ref{rem:graph}
depends only on $D$ and not on $T$. Thus, we will write $f_{{M^*_D}}$ for $f_{B_{D,T}}$ since it depends only on the oriented cographic matroid of $D$. We prove in Theorem \ref{thm:pd} below that the polynomials  $f_{{M^*_D}}$, for $D$ Eulerian, are exactly the polynomials of Murasugi--Stoimenow given in equation \eqref{pdt}. 
\end{remark}

\subsection{The planar bipartite and Eulerian case} \label{sec:planar} 
Let $G$ be a plane bipartite graph and $H$ its planar dual Eulerian graph. We orient the edges of $G$ from one bipartition to another as in Lemma \ref{lem:?} and denote this orientation by $\vec{G}$. We pick the  Eulerian reference orientation $\vec{H}$  of  $H$    that is dual to that of $\vec{G}$, thereby making it into an alternating dimap.  As noted in Remarks \ref{rem:graph} and \ref{rem:cograph}, the polynomials $f_{A_{\vec{G},T}}$ and $f_{B_{\vec{H},T}}$ only depend on the oriented graphic and cographic matroids $M_{\vec{G}}$ and $M_{\vec{H}}^*$, respectively.  Moreover, since we picked $\vec{G}$ and $\vec{H}$ as duals, we have that $M_{\vec{G}}$ and $M_{\vec{H}}^*$ are isomorphic oriented matroids. It readily follows then that:

\begin{lemma} \label{lem:dual} Let $H$ be a plane Eulerian graph and $G$ its planar dual bipartite graph on the bipartition $V_1 \sqcup V_2$. Denote by $\vec{G}$ the orientation of $G$ where all edges point from a vertex of $V_1$ to a vertex of $V_2$. Let $\vec{H}$ be the dual  orientation of $H$, which is an alternating dimap. Then:
 $$f_{M_{\vec{H}}^*}(t)=f_{M_{\vec{G}}}(t).$$ 
\end{lemma}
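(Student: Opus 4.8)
The strategy is to show that the two polynomials are computed by the same combinatorial data, namely the oriented matroid together with its associated volume form, and that this data is literally identical in the two cases. First I would recall from Remark~\ref{rem:graph} that the oriented matroid $M_{\vec{G}}=M[A_{\vec{G},T}]$ depends only on the standard orientation $\vec{G}$ (and not on the auxiliary tree $T$), and from Remark~\ref{rem:cograph} that $M^*_{\vec{H}}=M[B_{\vec{H},T}]$ likewise depends only on the alternating dimap $\vec{H}$. Since $G$ and $H$ are planar duals and we chose $\vec{H}$ to be the orientation dual to $\vec{G}$, each edge $e$ of $G$ corresponds to an edge $e^*$ of $H$, and under this identification of ground sets a fundamental cycle of $T$ in $G$ matches a fundamental cut of the dual tree $T^*$ in $H$, with the opposite/parallel sign conventions in Definitions~\ref{def:graph} and~\ref{def:cograph} matching exactly because of the duality of the two orientations. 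Hence $M_{\vec{G}}$ and $M^*_{\vec{H}}$ are isomorphic \emph{as oriented matroids}: they have the same bases (complements of spanning trees on one side, spanning trees on the other, via planar duality), and the oriented circuits correspond, so the sign vectors $\lambda_C$ agree up to the allowed rescaling.

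Next I would invoke Remark~\ref{rem:oriented}: the polynomial $f_A$ of a flat unimodular matrix depends only on the isomorphism class of the oriented matroid $M[A]$. Both $A_{\vec{G},T}$ and $B_{\vec{H},T}$ are totally unimodular (hence in particular unimodular as flat arrangements, so every parallelepiped $\Pi_B$ has volume $1$), and both are flat by Lemmas~\ref{lem:?} and~\ref{lem:eu}. Therefore $f_{A_{\vec{G},T}}$ and $f_{B_{\vec{H},T}}$ are each determined by nothing more than the (isomorphism class of the) oriented matroid, and we have just argued these oriented matroids are isomorphic. Combining with the notation $f_{M_{\vec{G}}}:=f_{A_{\vec{G},T}}$ and $f_{M^*_{\vec{H}}}:=f_{B_{\vec{H},T}}$ introduced in those remarks, we conclude $f_{M^*_{\vec{H}}}(t)=f_{M_{\vec{G}}}(t)$.

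I expect the only genuinely delicate point to be the verification that the oriented matroid isomorphism is correct \emph{on the nose}, i.e.\ that the sign conventions line up: in the planar setting, the fundamental cycle $C(T,e)$ of a non-tree edge $e$ in $G$ traversed according to $\vec{G}$ corresponds precisely to the fundamental cut $C^*(T^*,e^*)$ in $H$ with the ``opposite'' edges of the cut being exactly the ``disagreeing'' edges of the cycle, once $\vec{H}$ is taken dual to $\vec{G}$. This is essentially the standard fact that planar duality interchanges the cycle space and the cut space while respecting orientations, but it must be checked that the specific $\pm1$ assignments in Definitions~\ref{def:graph} and~\ref{def:cograph} are compatible with it (rather than off by a global sign on each circuit, which would in any case be harmless since the orientation of each circuit is only defined up to swapping $C^+\leftrightarrow C^-$). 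Everything else — flatness, unimodality, root-independence — is already packaged in the lemmas and remarks quoted above, so the proof is short: it is really just an assembly of Remarks~\ref{rem:graph}, \ref{rem:cograph}, and~\ref{rem:oriented} together with the self-duality of planar graphs.
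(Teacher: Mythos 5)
Your proposal is correct and takes essentially the same route as the paper: the paper also combines Remarks~\ref{rem:graph}, \ref{rem:cograph}, and~\ref{rem:oriented} with the observation that, because $\vec G$ and $\vec H$ were chosen as dual orientations of planar dual graphs, $M_{\vec G}$ and $M_{\vec H}^*$ are isomorphic oriented matroids, and then states that the equality of polynomials "readily follows." You additionally spell out the cycle/cut correspondence and sign-check that the paper leaves implicit, which is a reasonable elaboration rather than a different argument.
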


\section{The Murasugi--Stoimenow  polynomials $P_D$ as $f_{M_D^*}$}
\label{sec:cographic}

 This section is devoted to the study of the polynomial $f_{{M_D^*}}$ in the case when $D$ is an Eulerian digraph (not necessarily planar). In other words, this is the flat oriented cographic case. We prove that this instance of $f_A$ coincides with the polynomials $P_D$ of Murasugi--Stoimenow (as defined in equation \eqref{pdt}), and thus in particular the Alexander polynomials of special alternating links (Definition/Theorem \ref{alexpoly}) also belong to this class  as stated in Theorem \ref{thm:intro-Alex} (see  Theorem \ref{thm:graphical} below). 
    
 \subsection{Flat cographic case} To understand $f_{M_D^*}$, where $D$ is an Eulerian digraph,  we interpret the external semi-activity associated to the flat cographic oriented matroid $M_D^*$ in graph theoretic language. Recall that a base of $M^*_D$ is the {complement} $\Edg(D)\setminus \Edg(T)$ of some spanning tree $T$ of $D$. Thus a {non-element} of that base is some edge $d\in \Edg(T)$, and the unique circuit of $M_D^*$  formed in the union of the  base $\Edg(D)\setminus \Edg(T)$  and $d$ is the fundamental cut $C^*(T,d)$.  The linear dependence between the vectors corresponding to the elements of $C^*(T,d)$  is essentially given by equation \eqref{eq:fd}; this follows if we take $B_{D,T}$ as our matrix representation of $M_D^*$. Thus, in light of Remark \ref{rem:order}, we can interpret the external semi-activity of an edge $d$ of $T$  with respect to the base $\Edg(D)\setminus \Edg(T)$ as follows:
 
 \begin{lemma}  \label{lem:ext}   Let $D$ be an Eulerian digraph with  an ordering $\sigma$ of the edges of $D$. 
 Let $\sigma$ define the generic vector $\rho$ as in Remark \ref{rem:order}.
 For a fixed spanning tree $T$ of  $D$, the removal of an edge $d\in \Edg(T)$ breaks the vertex set of $D$ into $V_1 \sqcup V_2$, where $V_1$ and $V_2$ are the vertex sets of the two components of the forest $F=(V, \Edg(T)\setminus \{d\})$. The edge $d$ is externally semi-active, for the base $\Edg(D)\setminus \Edg(T)$ of $M_D^*$ and with respect to $\rho$, if and only if the edge $d$ points from a vertex in $V_i$ to a vertex in $V_j$ and also the minimal edge of 
 $C^*(T,d)$
 points from a vertex in  $V_i$ to a vertex in $V_j$, where $\{i,j\}=\{1,2\}$. 
\end{lemma}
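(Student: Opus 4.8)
The plan is to make the orientation of the relevant circuit of $M_D^*$ explicit from the representation $B_{D,T}$, and then to feed it into the combinatorial description of external semi-activity from Remark~\ref{rem:order}.

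I would first record the anatomy of the situation. The base under consideration is $\Edg(D)\setminus\Edg(T)$, its non-element is a tree edge $d\in\Edg(T)$, and --- since the circuits of $M_D^*$ are precisely the minimal edge cuts of $D$ --- the unique circuit of $M_D^*$ contained in $(\Edg(D)\setminus\Edg(T))\cup\{d\}$ is the fundamental cut $C:=C^*(T,d)$, all of whose edges other than $d$ lie outside $T$. Using $B_{D,T}$ as the matrix representing $M_D^*$, the columns indexed by the non-tree edges of $C$ are distinct standard basis vectors (they sit in the block $I_r$), and by Definition~\ref{def:cograph} the column $w_d$ is their signed sum $\sum_e \alpha_e w_e$, where the sum runs over the non-tree edges $e\in C$, with $\alpha_e=+1$ if $e$ is opposite to $d$ in $C$ and $\alpha_e=-1$ if $e$ is parallel to $d$. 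Hence the essentially unique linear dependence among $\{w_e:e\in C\}$ is $w_d-\sum_e\alpha_e w_e=0$, so the dependence vector $\lambda_C$ may be normalised to have $\lambda_d=1$ and $\lambda_e=-\alpha_e$ for the non-tree edges of $C$; that is, $\lambda_e=+1$ when $e$ is parallel to $d$ in $C$ and $\lambda_e=-1$ when $e$ is opposite to $d$.

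Next I would invoke Remark~\ref{rem:order}: with $\rho$ built from $\sigma$ as there, the oriented circuit is normalised so that $C^+$ contains the $\sigma$-minimal edge $m$ of $C$; equivalently, for the coefficients computed above, $C^+=\{e\in C:\lambda_e\text{ has the same sign as }\lambda_m\}$. By Definition~\ref{def:ext}, $d$ is externally semi-active for the base $\Edg(D)\setminus\Edg(T)$ exactly when $d\in C^+$, which --- since $\lambda_d=1>0$ --- holds precisely when $\lambda_m>0$. If $m=d$ this is automatic, and the asserted geometric condition is then vacuously true (take the component of $F=(V,\Edg(T)\setminus\{d\})$ containing the tail of $d$ to be $V_i$). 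If instead $m$ is a non-tree edge of $C$, then $\lambda_m>0$ iff $\alpha_m=-1$ iff $m$ is parallel to $d$ in $C^*(T,d)$; and since the partition $V_1\sqcup V_2$ occurring in the statement is exactly the bipartition of $\Ver(D)$ obtained by deleting $d$ from $T$ --- the very one appearing in \eqref{eq:fd} --- ``$m$ parallel to $d$'' is precisely the condition that $d$ and $m$ cross between $V_1$ and $V_2$ in the same direction, i.e.\ that both point from a vertex in $V_i$ to a vertex in $V_j$ for a single choice of $\{i,j\}=\{1,2\}$. That is the claimed equivalence.

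The substantive part is the bookkeeping of signs, and this is where I would be most careful. There is one sign reversal in passing from the expression for $w_d$ to the dependence coefficients $\lambda_e=-\alpha_e$, and a second one when re-expressing ``$\lambda_m>0$'' through the opposite/parallel dichotomy of Definition~\ref{def:cograph}; these must be reconciled with the convention of Remark~\ref{rem:order} that $C^+$ is the class of the $\sigma$-minimal element. I would also verify the degenerate case where $d$ itself is the $\sigma$-minimal edge of $C^*(T,d)$, and check that the ``opposite/parallel'' terminology of Lemma~\ref{lem:flow} and Definition~\ref{def:cograph} matches ``points from a vertex in $V_i$ to a vertex in $V_j$'' in the present statement once one observes that the two vertex partitions coincide.
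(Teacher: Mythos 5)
Your proof is correct and follows the same route the paper sketches in the paragraph preceding Lemma~\ref{lem:ext}: identify the circuit of $M_D^*$ in $(\Edg(D)\setminus\Edg(T))\cup\{d\}$ as the fundamental cut $C^*(T,d)$, read off its linear dependence from the representation $B_{D,T}$ (equivalently, from the flow relation~\eqref{eq:fd}), and feed the resulting signs into the combinatorial description of external semi-activity in Remark~\ref{rem:order}. You have merely made explicit the sign bookkeeping and the degenerate case $m=d$ that the paper leaves as an observation.
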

 
For the purposes of this section, we will denote the external semi-activity of a base $\Edg(D)\setminus \Edg(T)$ of $M_D^*$ by $\ext_{\sigma}(\Edg(D)\setminus \Edg(T))$, where $\sigma$ is an edge ordering of $D$ as in Lemma \ref{lem:ext}. 
The interpretation of the external semi-activity given in Lemma \ref{lem:ext} appears as a definition    in the work of T\'othm\'er\'esz \cite[Section 5]{T22}, where she calls our notion of the edge $d$ being  externally semi-active, with respect to $\Edg(D)\setminus \Edg(T)$,  the \textit{internal semi-activity} of $d$ with respect to $T$ instead.

\begin{lemma} \label{lem:sigma} Given an Eulerian digraph $D$, fix a root vertex $r$ of it. Fix also an Eulerian tour of $D$ 
and  label its edges consecutively with the integers $1,2,\ldots$ so that the edge $1$ has $r$ for its tail. With this edge ordering  $\sigma$ we have:
\[\left\{T \text{ a spanning tree of } D\mid\ext_{\sigma}(\Edg(D)\setminus \Edg(T))=k\right\} =\left\{T\mid T \text{ is a $k$-spanning tree of } D\right\}\] for all $k$. 
In particular, \quad
\(\left|\left\{T \text{ a spanning tree of } D\mid\ext_{\sigma}(\Edg(D)\setminus \Edg(T))=k\right\}\right|=c_k(D,r)\) \quad for all $k$.
\end{lemma}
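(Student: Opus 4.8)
## Proof proposal for Lemma \ref{lem:sigma}

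The plan is to show that, under the specified edge ordering $\sigma$ coming from an Eulerian tour rooted at $r$, the external semi-activity $\ext_\sigma(\Edg(D)\setminus\Edg(T))$ of a spanning tree $T$ equals the number of edges of $T$ whose orientation disagrees with the ``flow toward $r$'' direction --- which is exactly the quantity $k$ in the definition of a $k$-spanning tree. First I would recall Lemma \ref{lem:ext}: an edge $d\in\Edg(T)$ is externally semi-active precisely when $d$ and the $\sigma$-minimal edge of the fundamental cut $C^*(T,d)$ cross the partition $V_1\sqcup V_2$ (given by deleting $d$ from $T$) in the same direction, where $V_1,V_2$ are the two components of $T-d$. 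So the whole lemma reduces to a statement about which edge is $\sigma$-minimal in each fundamental cut.

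The key claim is: for every edge $d\in\Edg(T)$, the $\sigma$-minimal edge of the fundamental cut $C^*(T,d)$ is the edge of that cut whose head is on the side of the partition $V_1\sqcup V_2$ \emph{not} containing $r$ --- equivalently, the first edge of the Eulerian tour that crosses the cut. I would argue this using the standard fact that an Eulerian tour starting at $r$ visits the two sides of any edge cut alternately, and the very first time it crosses the cut $C^*(T,d)$ it must go from the $r$-side to the non-$r$-side (since it begins at $r$). Hence the $\sigma$-minimal edge of $C^*(T,d)$ points away from the component containing $r$. Label the two components of $T-d$ so that $V_1\ni r$; then the $\sigma$-minimal edge of $C^*(T,d)$ points from $V_1$ to $V_2$. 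By Lemma \ref{lem:ext}, $d$ is externally semi-active if and only if $d$ also points from $V_1$ to $V_2$, i.e. away from the $r$-side. Now observe that an edge $d\in\Edg(T)$ points away from the $r$-component of $T-d$ if and only if, in the unique path in $T$ from (the head of $d$'s component) to $r$, the edge $d$ is traversed \emph{against} its orientation --- precisely the edges that must be reversed to make $T$ an oriented spanning tree rooted at $r$. Therefore $\ext_\sigma(\Edg(D)\setminus\Edg(T))$ counts exactly these edges, so it equals $k$ iff $T$ is a $k$-spanning tree rooted at $r$. The cardinality statement and the identification with $c_k(D,r)$ then follow immediately from the definition of $c_k(D,r)$.

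The main obstacle I anticipate is pinning down cleanly the claim that the first edge of the Eulerian tour to cross a given cut goes from the $r$-side to the other side, and that this edge is genuinely the $\sigma$-minimal one (not merely ``an'' early edge). One has to be careful that the Eulerian tour, being a closed walk using every edge once, does cross each nonempty cut, does so an even number of times, and alternates direction on successive crossings; combining ``starts at $r$'' with ``alternates'' forces the parity/direction of the first crossing. A secondary subtlety is bookkeeping the correspondence ``edge points away from $r$-component of $T-d$'' $\Leftrightarrow$ ``edge reversed in the oriented tree toward $r$'', which is a routine but slightly fiddly tree argument: in $T$, reorienting all edges toward $r$ flips exactly those edges $d$ for which the head of $d$ lies in the $r$-free component of $T-d$, and this is the same condition as ``$d$ points from $V_1$ ($\ni r$) to $V_2$.'' Once these two points are in place, the proof is a short assembly of Lemma \ref{lem:ext} with these combinatorial observations.
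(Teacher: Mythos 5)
Your proof is correct and follows essentially the same route as the paper's: both reduce via Lemma \ref{lem:ext} to the observation that the $\sigma$-minimal edge of any fundamental cut $C^*(T,d)$ must point away from the $r$-side (because the Eulerian tour starts at $r$ and its first crossing of the cut goes from the $r$-shore outward), and then identify the externally semi-active edges of $T$ with exactly those that must be reversed to root $T$ toward $r$. The paper states the first-crossing fact more tersely without invoking alternation, but the substance is identical.
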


\begin{proof}  The edge ordering $\sigma$ of $D$  guarantees that for any partition $\Ver(D)=V_1\sqcup V_2$ so that $r\in V_1$, the smallest edge of the associated cut points from $V_1$ to $V_2$. Thus, given a spanning tree $T$ of $D$ and an edge $d$ of $T$,  the circuit $C^*(T,d)$ of $M_D^*$ is partitioned so that $C^+$ is the set of its edges with tails in the shore containing $r$, and $C^-$ is the set of its edges with their heads there. Therefore, by Lemma \ref{lem:ext}, $d$ is externally semi-active with respect to $\Edg(D)\setminus\Edg(T)$, in the ordering $\sigma$, if and only if the tail of $d$ and the root $r$ belong to the same connected component of $T-d$. In other words, the condition is that $d$ points away from $r$ within $T$, and these are exactly the edges one needs to reverse in order to turn $T$ into an oriented spanning tree. We conclude that $T$ is a $k$-spanning tree for $D$ and $r$ if and only if the external semi-activity of $T$ is $k$.
\end{proof}

Now we are ready to relate $P_{{D}}$ from equation \eqref{pdt} to $f_{M_{D}^*}$:

\begin{theorem} \label{thm:pd} For an arbitrary Eulerian digraph $D$ we have
\begin{equation} \label{eq:int} P_{{D}}(t)=f_{M_{D}^*}(t).\end{equation} \end{theorem}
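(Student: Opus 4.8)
The plan is to strip $f_{M_D^*}$ down to a pure spanning-tree generating function and then match it with $P_D$ coefficient by coefficient via Lemma~\ref{lem:sigma}. First I would record that $f_{M_D^*} = f_{B_{D,T}}$ is legitimately defined: $D$ Eulerian makes $B_{D,T}$ flat by Lemma~\ref{lem:eu}, and by Theorem~\ref{thm:BDT} this matrix represents $M_D^*$ and is totally unimodular. Total unimodularity forces $\Vol(\Pi_B) = 1$ for every basis $B$ of $M[B_{D,T}]$, so the defining sum~\eqref{A} collapses to $f_{M_D^*}(t) = \sum_B t^{\ext_\rho(B)}$ with no volume weights. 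Since the bases of $M_D^*$ are precisely the complements $\Edg(D)\setminus\Edg(T)$ of spanning trees $T$, this reads $f_{M_D^*}(t) = \sum_T t^{\ext_\rho(\Edg(D)\setminus\Edg(T))}$.

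Next I would exploit the $\rho$-independence of $f_A$ (Theorem~\ref{thm:f_A}) to pick the generic vector tailored to the root $r$. Choose an Eulerian tour of $D$ whose first edge has tail $r$, let $\sigma$ be the resulting edge ordering, and take $\rho = (\epsilon, \epsilon^2, \dots, \epsilon^N)$ in the $\sigma$-coordinates for a sufficiently small $\epsilon > 0$, as in Remark~\ref{rem:order}, where $N = |\Edg(D)|$. I would check genericity inline: for each circuit $C$ of $M_D^*$, $(\lambda_C, \rho) = \sum_i \lambda_i \epsilon^i$ is a nonzero polynomial in $\epsilon$ (some $\lambda_i \ne 0$), hence nonzero for all small $\epsilon$, and there are only finitely many circuits, so one $\epsilon$ serves all of them. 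With this $\rho$ one has $\ext_\rho(\Edg(D)\setminus\Edg(T)) = \ext_\sigma(\Edg(D)\setminus\Edg(T))$ in the sense of Lemma~\ref{lem:sigma}.

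Finally I would assemble: combining the above with Lemma~\ref{lem:sigma},
\[
f_{M_D^*}(t) = \sum_T t^{\ext_\sigma(\Edg(D)\setminus\Edg(T))} = \sum_{k \ge 0} \left|\{\,T : \ext_\sigma(\Edg(D)\setminus\Edg(T)) = k\,\}\right| t^k = \sum_{k\ge 0} c_k(D,r)\,t^k = P_D(t),
\]
which is exactly~\eqref{eq:int}.

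The genuinely substantive steps — the graph-theoretic reading of external semi-activity in the cographic case, and the Eulerian-tour trick that turns the activity count into a $k$-spanning-tree count — have already been front-loaded into Lemmas~\ref{lem:ext} and~\ref{lem:sigma}, so the theorem itself is mostly bookkeeping. The one place I expect hidden work is making sure the sign conventions built into $B_{D,T}$ (which edges of a fundamental cut carry $+1$ versus $-1$) are exactly the ones for which the sign vector $\lambda_{C^*(T,d)}$ matches equation~\eqref{eq:fd}, so that the condition $j \in C^+$ of Definition~\ref{def:ext} translates into the shore condition of Lemma~\ref{lem:ext}; verifying this compatibility, together with the genericity of $\rho$, is the only real obstacle, and it is essentially unwound already in the discussion preceding Lemma~\ref{lem:ext}.
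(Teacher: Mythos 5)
Your proposal is correct and follows essentially the same route as the paper: exploit total unimodularity (Theorem~\ref{thm:BDT}) to drop the volume weights, index bases by spanning-tree complements, pick the Eulerian-tour ordering $\sigma$ of Lemma~\ref{lem:sigma} with the corresponding $\rho$ of Remark~\ref{rem:order}, and apply Lemma~\ref{lem:sigma} to identify the $k$th coefficient with $c_k(D,r)$. Your extra remarks on the genericity of $\rho$ and the sign conventions in $B_{D,T}$ make explicit what the paper leaves implicit or handles in the surrounding discussion, but they do not change the argument.
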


 \begin{proof} Fix the same ordering $\sigma$ of the edges of $D$ as in Lemma \ref{lem:sigma}.
  Since the set of vectors associated to any base of the oriented cographic matroid $M_D^*$ is unimodular, cf.\ Theorem \ref{thm:BDT}, Definition \ref{def:f_A} 
  becomes: 
 \[f_{M_D^*}(t)=\sum_{T} 
 t^{\ext_{\sigma}(\Edg(D)\setminus \Edg(T))},\]
where the sum is over all spanning trees of $D$. Since in Lemma \ref{lem:sigma}
we proved that 
\[\left|\left\{T \text{ a spanning tree of } D \mid\ext_{\sigma}(\Edg(D)\setminus \Edg(T))=k\right\}\right|=c_k(D,r)\] 
for all $k$, the theorem follows.
\end{proof}

 \subsection{The Alexander polynomials of special alternating links} 
Theorem \ref{thm:pd} and Lemma \ref{lem:dual} readily imply:
 
\begin{corollary} \label{cor1} Let $G$ be  plane bipartite graph with orientation $\vec{G}$ where the edges are directed from one bipartition to the other. Let $H$ be its planar Eulerian dual with orientation $\vec{H}$ dual to $\vec{G}$, thereby an alternating dimap. Then:
 $$P_{\vec{H}}(t)=f_{M_{\vec{H}}^*}(t)=f_{M_{\vec{G}}}(t).$$ 
\end{corollary}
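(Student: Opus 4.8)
The plan is to obtain Corollary \ref{cor1} by simply chaining together two results that have already been established in the excerpt, since the statement is explicitly flagged as being an immediate consequence of Theorem \ref{thm:pd} and Lemma \ref{lem:dual}. First I would observe that $\vec{H}$ is an Eulerian digraph: indeed $H$ is the planar dual of a bipartite graph, hence Eulerian by the standard planar duality fact recalled in Section \ref{sec:matroiddefs}, and $\vec{H}$ is the orientation dual to the standard orientation $\vec{G}$ of $G$, which makes it an alternating dimap and in particular Eulerian at every vertex. Therefore Theorem \ref{thm:pd} applies verbatim to $D = \vec{H}$ and gives the first equality $P_{\vec{H}}(t) = f_{M_{\vec{H}}^*}(t)$.

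Next I would invoke Lemma \ref{lem:dual}, whose hypotheses match the present setup exactly: $H$ is a plane Eulerian graph, $G$ is its planar dual bipartite graph on the bipartition $V_1 \sqcup V_2$, $\vec{G}$ is the orientation directing every edge from $V_1$ to $V_2$, and $\vec{H}$ is the dual orientation of $H$, which is an alternating dimap. Lemma \ref{lem:dual} then yields the second equality $f_{M_{\vec{H}}^*}(t) = f_{M_{\vec{G}}}(t)$. Concatenating the two displayed equalities produces the chain $P_{\vec{H}}(t) = f_{M_{\vec{H}}^*}(t) = f_{M_{\vec{G}}}(t)$, which is exactly the assertion of the corollary.

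There is essentially no obstacle here; the only point that warrants a sentence of care is making sure the dictionary between the two lemmas lines up — namely that the digraph called $D$ in Theorem \ref{thm:pd} is the same object as the digraph called $\vec{H}$ in Lemma \ref{lem:dual}, and that the "flat oriented cographic matroid" $M_D^*$ of Section \ref{sec:cographic} coincides with the oriented cographic matroid $M_{\vec{H}}^*$ appearing in Lemma \ref{lem:dual}. Both identifications are transparent from Definition \ref{def:cograph} and Remark \ref{rem:cograph}, where $f_{M_D^*}$ was introduced precisely as $f_{B_{D,T}}$ for any spanning tree $T$, depending only on the oriented cographic matroid of the Eulerian digraph $D$. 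So the proof is a one-line composition, and the write-up need only cite the two ingredients and note that their hypotheses are satisfied.
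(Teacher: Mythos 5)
Your proposal is exactly the paper's argument: the paper introduces Corollary \ref{cor1} with the phrase ``Theorem \ref{thm:pd} and Lemma \ref{lem:dual} readily imply,'' and you chain precisely those two results in the same order, checking that $\vec{H}$ is Eulerian so Theorem \ref{thm:pd} applies and that the duality hypotheses of Lemma \ref{lem:dual} are met. Nothing is missing and nothing is different from the intended proof.
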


   Note that Theorem \ref{alexpoly} and Corollary \ref{cor1} imply Theorem \ref{thm:intro-Alex}:
    
   \begin{theorem} \label{cor:bip} Let $H$ be a plane Eulerian graph with an Eulerian orientation ${\vec{H}}$ that is an alternating dimap, and $G$ its planar dual bipartite graph with dual orientation $\vec{G}$. 
   Then the Alexander polynomial $\Delta_{L_{G}}(-t)$, for the special alternating link  $L_{G}$ associated to $G$, equals $f_{M_{\vec{G}}}(t)$, and dually, $f_{M_{\vec{H}}^*}(t)$:
 \begin{equation} \label{eq:??} \Delta_{L_{G}}(-t)\sim f_{M_{\vec{G}}}(t)=f_{M_{\vec{H}}^*}(t).
\end{equation}
\end{theorem}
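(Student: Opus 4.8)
The plan is to observe that Theorem~\ref{cor:bip} is a formal consequence of results already established in the excerpt, so the ``proof'' consists mainly of correctly chaining them together. First I would invoke Lemma~\ref{lem:?} and Remark~\ref{rem:graph} to see that, for the bipartite graph $G$ with its standard orientation $\vec{G}$, the matrix $A_{\vec{G},T}$ is flat and the polynomial $f_{M_{\vec{G}}}(t)$ is well-defined and independent of the choice of spanning tree $T$; dually, Lemma~\ref{lem:eu} and Remark~\ref{rem:cograph} give that $f_{M_{\vec{H}}^*}(t)$ is well-defined for the Eulerian alternating dimap $\vec{H}$. Then Lemma~\ref{lem:dual} (equivalently Corollary~\ref{cor1}) supplies the equality $f_{M_{\vec{G}}}(t)=f_{M_{\vec{H}}^*}(t)$, which comes from the fact that the oriented graphic matroid of $\vec{G}$ and the oriented cographic matroid of $\vec{H}$ are isomorphic because $\vec{H}$ was chosen dual to $\vec{G}$.

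Next I would bring in the knot-theoretic input. Theorem~\ref{thm:pd} identifies $f_{M_{\vec{H}}^*}(t)$ with the Murasugi--Stoimenow polynomial $P_{\vec{H}}(t)$ from equation~\eqref{pdt}, valid for any Eulerian digraph and in particular for our planar $\vec{H}$. Definition/Theorem~\ref{alexpoly} then states that $P_{\vec{H}}(t)\sim\Delta_{L_G}(-t)$, where $L_G$ is the special alternating link associated to the planar dual $G$ of the alternating dimap $\vec{H}$ --- and this is exactly the $G$ we started with. Concatenating, $\Delta_{L_G}(-t)\sim P_{\vec{H}}(t)=f_{M_{\vec{H}}^*}(t)=f_{M_{\vec{G}}}(t)$, which is precisely~\eqref{eq:??}.

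The only genuine bookkeeping obstacle is making sure the dualities line up consistently: $G$ and $H$ are planar duals, $\vec{G}$ is the standard (bipartite) orientation, $\vec{H}$ is the orientation of $H$ dual to $\vec{G}$, and one must check this $\vec{H}$ is indeed an alternating dimap (the Eulerian property of $H$ being automatic since $H$ is the dual of a bipartite graph, as recalled in Section~\ref{sub:alexpoly}), so that Definition/Theorem~\ref{alexpoly} applies with this exact pairing. All three equalities/equivalences in the chain are among objects attached to the \emph{same} pair $(\vec{G},\vec{H})$, so once the conventions are verified to match, no further work is required; in particular the independence of $f$ from the discrete parameter $\rho$ (Theorem~\ref{thm:f_A}) and from the auxiliary tree (Remarks~\ref{rem:graph},~\ref{rem:cograph}) guarantees the statement is about the oriented matroids alone. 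Thus I expect the proof to be short: cite Theorem~\ref{alexpoly} and Corollary~\ref{cor1} (itself already assembled from Theorem~\ref{thm:pd} and Lemma~\ref{lem:dual}) and conclude.

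\begin{proof}
By Lemma~\ref{lem:?} and Remark~\ref{rem:graph}, $f_{M_{\vec{G}}}(t)$ is well-defined and depends only on the oriented graphic matroid of $\vec{G}$; by Lemma~\ref{lem:eu} and Remark~\ref{rem:cograph}, $f_{M_{\vec{H}}^*}(t)$ is well-defined and depends only on the oriented cographic matroid of $\vec{H}$. Since $\vec{H}$ was chosen dual to $\vec{G}$, the oriented matroids $M_{\vec{G}}$ and $M_{\vec{H}}^*$ are isomorphic, so Lemma~\ref{lem:dual} gives $f_{M_{\vec{H}}^*}(t)=f_{M_{\vec{G}}}(t)$. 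On the other hand, Theorem~\ref{thm:pd} applied to the Eulerian digraph $\vec{H}$ yields $P_{\vec{H}}(t)=f_{M_{\vec{H}}^*}(t)$, and Definition/Theorem~\ref{alexpoly}, applied to the alternating dimap $\vec{H}$ whose planar dual is $G$, gives $\Delta_{L_{G}}(-t)\sim P_{\vec{H}}(t)$. Combining these identities, $\Delta_{L_{G}}(-t)\sim P_{\vec{H}}(t)=f_{M_{\vec{H}}^*}(t)=f_{M_{\vec{G}}}(t)$, which is~\eqref{eq:??}.
\end{proof}
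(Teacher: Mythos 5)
Your proof is correct and follows essentially the same route as the paper: the paper simply notes, immediately before the theorem statement, that Definition/Theorem~\ref{alexpoly} together with Corollary~\ref{cor1} imply the result, and gives no separate proof body. You unpack Corollary~\ref{cor1} into its ingredients (Theorem~\ref{thm:pd} and Lemma~\ref{lem:dual}) and also explicitly verify well-definedness via Lemma~\ref{lem:?}, Lemma~\ref{lem:eu}, and Remarks~\ref{rem:graph},~\ref{rem:cograph}, which is a slightly more thorough presentation of the same argument.
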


\section{$f_{A}$ as a refined count of integer points of polytopes}
\label{sec:intpoint}

In this section we provide an interpretation of the coefficients of  $f_{A}$ as a refined count of integer points of polytopes. Throughout this section, in  addition to the flat condition, we assume that the columns of the matrix $A$ form a  unimodular arrangement. Vector arrangements from both graphic and cographic matroids fall under this case.  We will utilize the results of this section when we prove log-concavity results for the polynomial  ~$f_{A}$ in Section \ref{sec:graphic}. 
 

\subsection{Zonotopes, trimming, and tiling} The  \textbf{zonotope} $Z_{A}\subset \R^{k}$, associated to the matrix $A =[a_1^T \mid \cdots \mid a_N^T]$ where $a_i\in \R^{k}$ for all $i \in [N]$,  is defined as the Minkowski sum \begin{equation}Z_{A}=\sum_{i=1}^N[0, a_i]=\left\{\sum_{i=1}^N t_ia_i\,\middle|\, 0\leq t_i\leq 1, i \in [N]\right\}.\end{equation} Throughout this section we denote the dimension of $Z_A$ by $d$, which is less than or equal to $k$.

For a fixed vector $l \in \R^k$, we define the \textbf{$l$-trimmed zonotope $Z_{A,l}^-$} as follows:
 \begin{equation} \label{eq:trim} Z_{A,l}^-=\conv(x \in Z_{A}\cap \Z^k \mid x+\epsilon l \in Z_{A} \text{ for some } \epsilon>0).\end{equation}

We note that the notion of $l$-trimming for a  fixed vector $l \in \R^k$ works more broadly for any polytope, not just a zonotope. We will utilize it for zonotopes only in this paper. 

\medskip

Zonotopes have beautiful combinatorial zonotopal tilings.  
 For a base $B=\{i_1, \ldots, i_d\}$ of $M[A]$, recall that we earlier used the notation $\Pi_B=\sum_{j=1}^d[0, a_{i_j}]$. This special case of the zonotope construction gives a parallelepiped because of linear independence.

\begin{theorem} \label{thm:tiling} \cite{slicing} 
For a generic vector $\rho \in \R^N$, the collection of parallelepipeds $$\Pi_B+\sum_{j\in \Ext_{\rho}(B)} a_j,$$
where $B$ ranges over all bases of the matroid $M[A]$, forms a 
tiling of $Z_{A}$.
\end{theorem}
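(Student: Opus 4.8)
The plan is to verify the tiling claim combinatorially, building on the structural facts already available: that $f_A(t)$ is independent of $\rho$ (Theorem~\ref{thm:f_A}), that the parallelepipeds $\Pi_B$ have volumes $\Vol(\Pi_B)$ summing (over all bases) to $\Vol(Z_A)$ by the classical decomposition of a zonotope, and that each $j\in\Ext_\rho(B)$ records the sign data of the unique circuit in $B\cup\{j\}$. The key is to show that the translates $\Pi_B+\sum_{j\in\Ext_\rho(B)}a_j$ are \emph{pairwise non-overlapping}; since their volumes already sum to $\Vol(Z_A)$ and each is plainly contained in $Z_A$, non-overlap forces them to tile.

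First I would set up coordinates for the generic vector exactly as in Remark~\ref{rem:order}: replace $\rho$ by $(\epsilon,\epsilon^2,\dots,\epsilon^N)$ for small $\epsilon>0$, so that $\rho$-genericity becomes a linear order on $[N]$, and each circuit $C$ is oriented so that $\min C\in C^+$. This reduces the statement to the standard Bohne--Dress / zonotopal-tiling picture associated to a single ordering. Then, given a point $x$ in the interior of $Z_A$, I want to exhibit a \emph{unique} base $B$ and sign vector consistent with $\Ext_\rho(B)$ such that $x\in\Pi_B+\sum_{j\in\Ext_\rho(B)}a_j$. The natural way to produce $B$ is a greedy/exchange procedure: write $x=\sum_i t_i a_i$ with $t_i\in[0,1]$, and process the indices $1,2,\dots,N$ in order, at each step deciding whether $a_i$ enters the ``basis part'' with a fractional coefficient or gets pushed to one of the two extreme values $0$ or $a_i$ (the latter meaning $i\in\Ext_\rho(B)$). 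Uniqueness of the resulting decomposition — the assignment of each index to ``fractional (in $B$)'', ``$t_i=0$'', or ``$t_i=1$ (externally semi-active)'' — is exactly what must be proved, and it is the crux.

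The main obstacle, then, is precisely this uniqueness: showing that two distinct bases $B\neq B'$ yield translated parallelepipeds with disjoint interiors. Here the circuit structure does the work. If the two translated parallelepipeds overlapped, one could move from the decomposition associated to $B$ to that of $B'$ by a sequence of circuit exchanges, and I would track, across such an exchange along a circuit $C$, how the triple partition (fractional / at-$0$ / at-$1$) of the indices of $C$ changes; the orientation convention $\min C\in C^+$ pins down which of the two boundary facets of $\Pi_B$ (the $t_i=0$ facet or the $t_i=1$ facet) is ``active'', and this is consistent with the definition of $\ext_\rho$ in exactly one way. Concretely, I expect to argue that on the common interior point $x$, reading the coefficients $t_i$ in increasing order of $i$ forces a deterministic choice at each index, so $B$ and the active set $\Ext_\rho(B)$ are both determined by $x$. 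Once uniqueness of the covering base is established, the containment $\Pi_B+\sum_{j\in\Ext_\rho(B)}a_j\subseteq Z_A$ is immediate (each summand $a_j$, $j\in\Ext_\rho(B)$, and each edge $[0,a_{i}]$, $i\in B$, is a summand of $Z_A$, and the leftover indices contribute $0$), and the volume count $\sum_B\Vol(\Pi_B)=\Vol(Z_A)$ — which is the same identity that makes $f_A(1)=\Vol(Z_A)$ — closes the argument.

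Finally I would remark that this is the ``half-open'' refinement of McMullen's zonotopal tiling: the standard tiling places, for each base $B$, a single translate of $\Pi_B$ by the sum of the $a_j$ over $j$ in a chosen ``external'' set, and the content of the theorem is that the set $\Ext_\rho(B)$ defined via the secondary arrangement is a valid such choice, coherently across all bases. Since everything is invariant under the perturbation $\rho\mapsto(\epsilon,\dots,\epsilon^N)$ used above and, by Theorem~\ref{thm:f_A}, under crossing a wall $H_C$, the tiling for a general generic $\rho$ follows from the ordered case by the same wall-crossing bookkeeping that proves $f_A$ is well defined, so no generality is lost by reducing to a linear order.
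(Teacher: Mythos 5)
Your high-level strategy is sound and is, in fact, more of a proof than the paper itself offers: the paper's proof consists of the single line ``follows readily from the construction of the involved objects,'' deferring entirely to \cite{slicing}. You correctly identify the three ingredients — (i) containment of each translate $\Pi_B+\sum_{j\in\Ext_\rho(B)}a_j$ in $Z_A$, which is immediate because this set is the Minkowski sum of $[0,a_i]$ for $i\in B$, of $\{a_j\}$ for $j\in\Ext_\rho(B)$, and of $\{0\}$ for the remaining indices, each a subset of the corresponding segment; (ii) the classical volume identity $\sum_B\Vol(\Pi_B)=\Vol(Z_A)$; and (iii) pairwise disjointness of interiors, which together with (i) and (ii) forces a tiling. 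The reduction to the linearly ordered case via $\rho=(\epsilon,\epsilon^2,\dots,\epsilon^N)$ and wall-crossing is also legitimate.

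The gap is in step (iii). You correctly flag it as ``the crux,'' but you do not actually close it: the passages ``I would track, across such an exchange along a circuit $C$, how the triple partition\dots\ changes'' and ``I expect to argue that\dots\ reading the coefficients $t_i$ in increasing order of $i$ forces a deterministic choice'' are a plan, not an argument. A putative point $x$ in the interior of two translates gives two coefficient vectors $s,s'\in[0,1]^N$ with $s-s'\in\ker A\setminus\{0\}$; turning the conformal circuit decomposition of $s-s'$ into a contradiction with the $\rho$-orientations requires real care, and you do not carry it out. The cleanest way to close the gap — and the one you gesture at when you invoke Bohne--Dress — is the lifting construction: set $\tilde a_i=(a_i,\rho_i)\in\R^{d+1}$, form the lifted zonotope $\tilde Z=\sum_i[0,\tilde a_i]$, and project its lower boundary to $Z_A$. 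Genericity of $\rho$ with respect to $\h_A^{sec}$ is precisely the condition that every lower facet is a $d$-parallelepiped; these are indexed by bases $B$, and for $j\notin B$ the lifted vector $\tilde a_j$ lies strictly on one side of the affine span of $\{\tilde a_i\mid i\in B\}$, which unwinds to the condition $j\in\Ext_\rho(B)$. Since the lower boundary of a convex body projects bijectively onto its image, coverage, disjointness, and the identification of the translation vector all fall out simultaneously; this replaces both your volume count and your uniqueness sketch. I would recommend either carrying out the circuit-exchange argument in full or, preferably, substituting the lift.
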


\begin{proof}  Theorem \ref{thm:tiling} follows readily from the construction of the involved objects.\end{proof} 

\subsection{Levels of a trimmed zonotope} This section examines how the tiling of the zonotope  $Z_{A}$ from Theorem \ref{thm:tiling} gives us a handle on the integer points of the $l$-trimmed zonotope $Z_{A,l}^-$ under the right conditions.

 \begin{definition} \label{def:adm} Fix $m \in \{0,1,2,\ldots, d\}$. A vector $l \in \R^k$ is $m$-admissible  for  $A =[a_1^T \mid \cdots \mid a_N^T]$, $a_i\in \R^k$, $i \in [N]$,  if for every base $B=\{i_1, \ldots, i_d\}$ of $M[A]$ we can express $l=\sum_{j=1}^d \alpha_j {a_{i_j}}$ so that exactly $m$ coordinates of $(\alpha_1, \ldots, \alpha_d)$ are positive and exactly $(d-m)$ are negative.  
 \end{definition}  
 
\begin{lemma} \label{lem:par} Given a base $B=\{i_1, \ldots, i_d\}$ of $M[A]$, let  $l=c_1 a_{i_1}+\dots+ c_m a_{i_m}-(c_{m+1} a_{i_{m+1}}+\dots+c_{d} a_{i_{d}})$ be an $m$-admissible vector, where $c_i>0$ for all $i \in [d]$. The $l$-trimming of $\Pi_B$ is the unique vertex of $\Pi_B$ 
given by $a_{i_{m+1}}+\dots+a_{i_d}$. 
\end{lemma}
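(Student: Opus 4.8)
The plan is to compute the trimming of $\Pi_B$ by working directly in the coordinate system supplied by the basis $\{a_{i_1},\dots,a_{i_d}\}$. First I would record the only input from the hypotheses of this section that we need about lattice points: since the arrangement is unimodular, the base $\{a_{i_1},\dots,a_{i_d}\}$ is a lattice basis of the intersection of $\Z^k$ with the linear span of $a_{i_1},\dots,a_{i_d}$, and hence the lattice points contained in the parallelepiped $\Pi_B=\sum_{j=1}^d[0,a_{i_j}]$ are exactly its $2^d$ vertices $v_S:=\sum_{j\in S}a_{i_j}$, $S\subseteq[d]$ (these are genuinely distinct because $B$ is a base, as already noted in the text). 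Writing a general point of $\Pi_B$ as $x=\sum_{j=1}^d t_ja_{i_j}$ with $t_j\in[0,1]$, the coordinates of $l$ in this basis are $\ell_j=c_j>0$ for $1\le j\le m$ and $\ell_j=-c_j<0$ for $m+1\le j\le d$, so $x+\varepsilon l=\sum_j(t_j+\varepsilon\ell_j)a_{i_j}$, and membership in $\Pi_B$ amounts to $t_j+\varepsilon\ell_j\in[0,1]$ for all $j$.

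The key step is to check that $x+\varepsilon l\in\Pi_B$ for some $\varepsilon>0$ if and only if $t_j<1$ for every $j\le m$ and $t_j>0$ for every $j>m$. For $j\le m$ we have $\ell_j>0$, so $t_j+\varepsilon\ell_j\ge 0$ is automatic while $t_j+\varepsilon\ell_j\le 1$ holds for all sufficiently small $\varepsilon>0$ exactly when $t_j<1$; symmetrically, for $j>m$ we have $\ell_j<0$, so $t_j+\varepsilon\ell_j\le 1$ is automatic while $t_j+\varepsilon\ell_j\ge 0$ holds for small $\varepsilon>0$ exactly when $t_j>0$. Since these are finitely many constraints, each satisfiable individually for all small enough $\varepsilon$, they are satisfiable by a common $\varepsilon>0$ (take the minimum of the individual bounds), which gives the stated equivalence.

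Finally I would intersect this condition with the list of lattice points. For a vertex $v_S$ one has $t_j\in\{0,1\}$, so the condition ``$t_j<1$ for $j\le m$'' forces $j\notin S$ for all $j\le m$, and ``$t_j>0$ for $j>m$'' forces $j\in S$ for all $j>m$; thus the unique vertex of $\Pi_B$ satisfying the condition is $v_{\{m+1,\dots,d\}}=a_{i_{m+1}}+\dots+a_{i_d}$. Consequently the set appearing in \eqref{eq:trim}, applied to the polytope $\Pi_B$ in place of $Z_A$, is the singleton $\{a_{i_{m+1}}+\dots+a_{i_d}\}$, and its convex hull is that same point, which is indeed a vertex of $\Pi_B$, proving the lemma. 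The only point that requires care is the boundary behaviour encoded by the strict inequalities — that starting from a facet on which some coordinate $t_j$ is already at its extreme value in the ``wrong'' direction for $l$ forces one to leave $\Pi_B$ immediately — and beyond isolating this the argument is a routine computation, so I expect no real obstacle.
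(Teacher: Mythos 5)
Your proof is correct and is precisely the computation that the paper compresses into the single phrase ``Readily follows by inspection.'' Working in the coordinate system given by the basis $\{a_{i_1},\dots,a_{i_d}\}$, reducing the trimming condition ``$t_j+\varepsilon\ell_j\in[0,1]$ for small $\varepsilon>0$'' to the sign conditions $t_j<1$ ($j\le m$), $t_j>0$ ($j>m$), and then invoking unimodularity to conclude that the only lattice points of $\Pi_B$ are its $2^d$ vertices, is exactly the intended argument; you have simply made the inspection explicit. One small caveat worth keeping in mind: the inference ``unimodular $\Rightarrow$ $\{a_{i_1},\dots,a_{i_d}\}$ is a lattice basis of $\Z^k\cap\operatorname{span}$'' uses that the volume form on the (possibly proper) subspace spanned by the arrangement is normalized so that this sublattice has covolume $1$. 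The paper sets this up for the graphic case and leaves the general normalization implicit, so your invocation of it is in line with the paper's conventions, but it is the one place where a fully self-contained write-up would want to say a word more.
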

 \begin{proof} Readily follows by inspection.\end{proof}

 \begin{definition} \label{def:level} Fix a flat  matrix $C$, where the  linear form $h$ witnesses the flatness of $C$, that is, $h(c_i)=1$ for all columns $c_i^T$ of $C$.  An integer point $p \in Z_{C}$ is said to be on level $z$ if $h(c)=z$. \end{definition}
 

  \begin{theorem} \label{thm:f} Let $A =[a_1^T \mid \dots \mid a_N^T]$, $a_i\in \R^k$, $i \in [N]$, be a matrix arising from a  flat, unimodular arrangement, such that $Z_A$ is $d$-dimensional.  If $l\in \R^k$ is an $m$-admissible vector for  $A$ and $h$ is the   linear form such that $h(a_i)=1$ for all $i \in [N]$, then:
 $$\sum_{p \in Z_{A,l}^-\cap\Z^k}t^{h(p)}=f_{A}(t)\cdot t^{d-m}.$$
 \end{theorem}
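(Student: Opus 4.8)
The plan is to combine the zonotopal tiling of $Z_A$ from Theorem~\ref{thm:tiling} with the computation of the $l$-trimming of a single parallelepiped from Lemma~\ref{lem:par}, tracking levels via the flatness form $h$. First I would fix a generic vector $\rho \in \R^N$, which by Theorem~\ref{thm:f_A} does not affect $f_A(t)$, and write $Z_A$ as the disjoint union of the translated parallelepipeds $\Pi_B + s_B$ where $s_B = \sum_{j \in \Ext_\rho(B)} a_j$, as $B$ ranges over the bases of $M[A]$. The key claim is that $l$-trimming commutes with this tiling in the following sense: an integer point $p \in Z_A$ survives $l$-trimming (i.e.\ $p + \epsilon l \in Z_A$ for some $\epsilon > 0$) if and only if, writing $p$ in the unique tile $\Pi_B + s_B$ containing it, the point $p - s_B$ survives the $l$-trimming of $\Pi_B$ itself. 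Granting this, Lemma~\ref{lem:par} tells us that the $l$-trimming of $\Pi_B$ is the single vertex $v_B = a_{i_{m+1}} + \dots + a_{i_d}$ determined by the (sign pattern of the) $m$-admissible expansion of $l$ in the basis $B$, so each tile contributes exactly one integer point, namely $v_B + s_B$, to $Z_{A,l}^-$.

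Next I would compute the level of this point. Since $h$ is linear and $h(a_i) = 1$ for all $i$, we get $h(v_B + s_B) = (d - m) + |\Ext_\rho(B)| = (d-m) + \ext_\rho(B)$. Therefore
\[
\sum_{p \in Z_{A,l}^- \cap \Z^k} t^{h(p)} = \sum_B t^{(d-m) + \ext_\rho(B)} = t^{d-m} \sum_B t^{\ext_\rho(B)} \cdot \Vol(\Pi_B),
\]
where in the last equality I use unimodularity, $\Vol(\Pi_B) = 1$, to insert the volume factor freely; by Definition~\ref{def:f_A} this is exactly $t^{d-m} f_A(t)$, as desired. A small point to address here is that the vertices $v_B + s_B$ of distinct tiles are genuinely distinct integer points (no overcounting) — this follows because $Z_{A,l}^-$ is the convex hull of exactly these points and the count of lattice points of $Z_{A,l}^-$ at level $z$ equals the number of tiles with $h(v_B + s_B) = z$; alternatively, distinctness follows from the fact that the tiles have disjoint interiors and each surviving lattice point lies in the closure of exactly the tiles whose trimmed parallelepiped it belongs to.

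The main obstacle is the key claim that trimming commutes with the tiling, i.e.\ that a lattice point $p \in Z_A$ satisfies $p + \epsilon l \in Z_A$ for small $\epsilon > 0$ precisely when the same holds with $Z_A$ replaced by the unique tile $\Pi_B + s_B$ containing $p$ — and in particular that the trimmed zonotope $Z_{A,l}^-$ is tiled by the trimmed parallelepipeds $(\Pi_B)_l^- + s_B$. The subtlety is that $p$ may lie on the shared boundary of several tiles, and $p + \epsilon l$ might leave the tile containing $p$ in its direction while still remaining in $Z_A$ through a neighboring tile; conversely $p$ could be an interior lattice point of $Z_A$ but a boundary point of its tile in a bad direction. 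I expect the resolution to use the fact that the tiling from Theorem~\ref{thm:tiling} is a (polyhedral) subdivision and that $m$-admissibility of $l$ forces a uniform behavior: for \emph{every} basis $B$, the direction $l$ points ``inward'' from the specific vertex $v_B$ of $\Pi_B$, so the local picture at any lattice point is governed consistently across all tiles meeting it. Working out this boundary bookkeeping carefully — perhaps by perturbing $l$ slightly or by analyzing which faces of the tiling $p$ lies on — is where the real work lies; the level computation and the application of the tiling theorem are then routine.
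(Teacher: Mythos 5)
Your plan follows the same route as the paper: combine the zonotopal tiling (Theorem~\ref{thm:tiling}) with the single-vertex trimming of each parallelepiped (Lemma~\ref{lem:par}), compute levels via $h$, and use unimodularity to drop the volume factor. The level computation $h(v_B+s_B)=(d-m)+\ext_\rho(B)$ is exactly what is needed.

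However, your ``key claim'' is misformulated, and you leave the step you call ``the main obstacle'' open rather than closing it. You write the claim in terms of \emph{the unique tile containing $p$}, which is not well-defined when $p$ sits on a shared face, and you then worry at length about the resulting boundary cases. The clean statement — and what the paper uses — is a union, not a per-point selection: for \emph{any} dissection of a polytope $P$ into top-dimensional pieces $P_1,\dots,P_z$, the set of lattice points generating the $l$-trimming of $P$ equals $\bigcup_i\{x\in P_i\cap\Z^k : x+\epsilon l\in P_i \text{ for some }\epsilon>0\}$. The $\supseteq$ inclusion is immediate since $P_i\subset P$. For $\subseteq$, if $p+\epsilon l\in P$ for small $\epsilon>0$, the segment $[p,\,p+\epsilon l]$ lies in $P$ and is covered by the finitely many closed pieces, so some $P_i$ contains an initial subsegment $[p,\,p+\epsilon' l]$, giving $p\in P_i$ and $p+\epsilon' l\in P_i$. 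No perturbation of $l$ and no face-by-face bookkeeping is required; $m$-admissibility is not what makes this step work (it only enters via Lemma~\ref{lem:par}).

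For the no-overcounting point you raise, the clean argument is the one you almost say but do not pin down: by $m$-admissibility, $l$ expanded in the basis $B$ has all coordinates nonzero, so $l$ points into the \emph{interior} of the tangent cone of $\Pi_B+s_B$ at its trimmed vertex $v_B+s_B$; hence $v_B+s_B+\epsilon l$ lies in the open interior of that tile for small $\epsilon$. Since distinct tiles have disjoint interiors, distinct bases yield distinct trimmed vertices. With the union lemma and this distinctness argument in place, the rest of your computation goes through and reproduces the paper's proof.
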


 \begin{proof} By Lemma \ref{lem:par} there is a  unique 
 point in the $l$-trimmed parallelepiped $\Pi_B$, namely, a vertex on level $d-m$. Hence, once we fix $\rho$ and thus a zonotopal tiling of $Z_{A}$ as in  Theorem \ref{thm:tiling}, each tile $\Pi_B+\sum_{j\in \Ext_{\rho}(B)} a_j$ contributes a level $\ext_{\rho}(B)+d-m$ integer point to $Z_{A,l}^-$. Furthermore, these are all the integer points that $Z_{A,l}^-$ contains.  This follows since  for a dissection of any   polytope $P$ into top dimensional  polytopes $P_1, \ldots, P_z$, one readily proves that the set of integer points that generate the $l$-trimming of $P$ is the union of the same sets for the pieces
 $P_1, \ldots, P_z$. \end{proof}  
 
  \begin{corollary}  If $l\in \R^k$ is an $m$-admissible vector for  $A =[a_1^T \mid \dots \mid a_N^T]$, $a_i\in \R^k$, $i \in [N]$, then the number of integer points of $Z_{A,l}^-$ is the volume of $Z_{A}$.
 \end{corollary}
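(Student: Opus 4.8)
The plan is to specialize the identity of Theorem~\ref{thm:f} at $t=1$. The hypotheses of the corollary --- $A$ flat and unimodular, $Z_A$ of dimension $d$, and $l$ an $m$-admissible vector --- are precisely the hypotheses of Theorem~\ref{thm:f}, so that theorem applies verbatim and gives
\begin{equation*}
\sum_{p\in Z_{A,l}^-\cap\Z^k}t^{h(p)}=f_A(t)\cdot t^{d-m},
\end{equation*}
where $h$ is the linear form witnessing the flatness of $A$. Setting $t=1$, the left-hand side collapses to $\bigl|Z_{A,l}^-\cap\Z^k\bigr|$, i.e.\ the number of integer points of the trimmed zonotope, and the right-hand side becomes $f_A(1)$.

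The second step is to recognize $f_A(1)$ as $\Vol(Z_A)$. From Definition~\ref{def:f_A}, evaluating at $t=1$ erases the exponents and leaves $f_A(1)=\sum_B\Vol(\Pi_B)$, the sum taken over all bases $B$ of $M[A]$. By Theorem~\ref{thm:tiling}, for any generic $\rho\in\R^N$ the translated parallelepipeds $\Pi_B+\sum_{j\in\Ext_\rho(B)}a_j$ tile $Z_A$; since each tile is a translate of $\Pi_B$ and translations preserve the volume form, adding up the volumes of the tiles yields $\sum_B\Vol(\Pi_B)=\Vol(Z_A)$. Chaining the two steps gives $\bigl|Z_{A,l}^-\cap\Z^k\bigr|=f_A(1)=\Vol(Z_A)$, which is the assertion.

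I do not expect any real obstacle: the statement is a one-line consequence of Theorem~\ref{thm:f} at $t=1$ combined with the tiling of Theorem~\ref{thm:tiling}. The only point that deserves a moment of care is that Theorem~\ref{thm:tiling} produces genuine \emph{translates} of the $\Pi_B$ forming a tiling (rather than merely congruent copies with possible overlaps of positive measure), which is exactly what licenses the additivity of volume used above; but this is already part of the content of that theorem.
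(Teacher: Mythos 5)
Your proof is correct and is essentially the derivation the paper intends: the corollary sits directly after Theorem~\ref{thm:f}, with no proof written out, and your two steps (set $t=1$ in Theorem~\ref{thm:f}; identify $f_A(1)=\sum_B\Vol(\Pi_B)=\Vol(Z_A)$ via the tiling of Theorem~\ref{thm:tiling}) are precisely what the paper leaves implicit.
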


The topic of the next section is an application  of Theorem \ref{thm:f}: When $l\in \R^k$ is an $m$-admissible vector for  $A$, and  $Z_{A,l}^-$ is a generalized permutahedron, then we can apply the theory of Lorentzian polynomials to obtain the log-concavity of the coefficients of $f_{A}$.

\section{Log-concavity of $f_A$ in the flat graphic case}
\label{sec:graphic}

In this section we prove Theorem \ref{thm:intro-graphical} (see Theorem \ref{thm:graphical} below). We accomplish this by exhibiting an admissible vector in the flat oriented graphical case, showing that the corresponding vector trimmed zonotope is a generalized permutahedron, and applying the theory of Lorentzian polynomials to the latter.

\subsection{An admissible vector in the flat graphical case} As discussed in Section  \ref{sec:graph}, one instance of a flat vector arrangement arises from a connected   bipartite graph $G$  with vertex bipartition $V=\{1,\dots,m\}\sqcup \{\overline{1},\dots,\overline{n}\}$. We denote by $\vec{G}$ the orientation of $G$ where every edge is directed from $V_1=\{1,\dots,m\}$ to $V_2=\{\overline{1},\dots,\overline{n}\}$. 
 
 In order to utilize Theorem \ref{thm:f} for $M_{\vec{G}}$, we need to specify an admissible vector for $I(\vec{G})$. We do so in the following lemma.  Recall that the incidence matrix $I(\vec{G})$ is not full rank, rather its columns span the subspace $W$ of $\R^{m+n}$ 
 cut out by the equation  $\sum_{i=1}^m x_i+\sum_{j=1}^n x_{\bar{j}}=0$. The space $W$ can be identified with $\R^{m+n-1}$ via the projection $p(x_1, \ldots, x_m,x_{\overline1},\ldots,x_{\overline n})=(x_1, \ldots, x_m,x_{\overline1},\ldots,x_{\overline{n-1}})$. On $W$ we let the volume form $\Vol$ be the one induced by the standard volume form on  $\R^{m+n-1}$.  In the notation of the previous section, we have $k=m+n$ and $d=m+n-1$.

\begin{lemma} \label{flows}
 Let $G$ be a connected bipartite graph with vertex bipartition $V=\{1,\dots,m\}\sqcup \{\overline{1},\dots,\overline{n}\}$.  Let $l=(l_1, \ldots, l_m, l_{\bar{1}}, \ldots, l_{\bar{n}}) \in \Z^{m+n}$ be an integer vector satisfying: 
 \begin{enumerate}
 \item  $\sum_{i=1}^m l_i+\sum_{j=1}^n l_{\bar{j}}=0$,
 \item $l$ has a unique negative coordinate $l_{\bar{j}} < 0$ for some $j \in [n]$, 
 \item all coordinates other than the unique  negative coordinate  are positive.  
 \end{enumerate}
 Then the vector $l$ is $m$-admissible for
the signed incidence matrix $I(\vec{G})$.  
\end{lemma}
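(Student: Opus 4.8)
The plan is to interpret the coefficients in the expansion of $l$ over a spanning-tree basis as the unique feasible flow on that tree with prescribed net flows, and then to read off the signs of these flow values combinatorially. Recall that the bases of $M[I(\vec{G})]$ are exactly the edge sets of the spanning trees $T$ of $G$, and that for such a $T$ the columns $\{v_e\}_{e\in\Edg(T)}$ of $I(\vec{G})$ form a basis of the subspace $W$. Hence for each fixed spanning tree $T$ there is a unique tuple $(\alpha_e)_{e\in\Edg(T)}$ with $l=\sum_{e\in\Edg(T)}\alpha_e v_e$, and since $d-m=(m+n-1)-m=n-1$, it suffices to show that among the $\alpha_e$ exactly $m$ are positive and exactly $n-1$ are negative (in particular none is zero). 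I would also note that the identification of $W$ with $\R^{m+n-1}$ via the projection $p$ is irrelevant here, being linear and injective on $W$, so the $\alpha_e$ are unaffected.

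The first real step is a flow formula for $\alpha_e$. Reading the identity $l=\sum_e\alpha_e v_e$ coordinate by coordinate says precisely that $(\alpha_e)_{e\in\Edg(T)}$, viewed as a flow on $\vec{G}$ supported on the edges of $T$, has net outflow equal to $l_v$ at every vertex $v$; here I use that every edge of $\vec{G}$ is oriented from $V_1$ to $V_2$, so $v_e={\bf e}_i-{\bf e}_{\bar k}$ for an edge $e$ running from $i\in V_1$ to $\bar k\in V_2$. For $e\in\Edg(T)$, deleting $e$ splits $T$ into two components; let $S(e)$ be the one containing the tail of $e$. Since $e$ is the only edge of $T$ joining the two shores, summing the net outflows over all vertices of $S(e)$ gives $\alpha_e=\sum_{v\in S(e)}l_v$. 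Now the sign analysis is immediate from the hypotheses on $l$: all coordinates of $l$ are positive except $l_{\bar{j}}<0$, and $\sum_v l_v=0$, so if $\bar{j}\notin S(e)$ then $\alpha_e=\sum_{v\in S(e)}l_v>0$, while if $\bar{j}\in S(e)$ then $\alpha_e=-\sum_{v\notin S(e)}l_v<0$. In either case $\alpha_e\neq 0$, and $\alpha_e>0$ precisely when $\bar{j}$ lies on the head side of $e$, i.e.\ when $e$ points away from $\bar{j}$ in the tree $T$ rooted at $\bar{j}$.

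It remains to count the edges of $T$ pointing away from $\bar{j}$. Rooting $T$ at $\bar{j}$, the map sending each edge of $T$ to its endpoint farther from the root is a bijection onto $\Ver(G)\setminus\{\bar{j}\}$. An edge $e$ points away from $\bar{j}$ if and only if its tail is that farther endpoint, and since all tails lie in $V_1$, this happens exactly when the farther endpoint lies in $V_1$. Because $\bar{j}\in V_2$, all $m$ vertices of $V_1$ are non-root vertices, so exactly $m$ edges of $T$ point away from $\bar{j}$ and the remaining $(m+n-1)-m=n-1$ point toward it. Combined with the sign analysis above, exactly $m$ of the coefficients $\alpha_e$ are positive and exactly $n-1$ are negative, which is the definition of $l$ being $m$-admissible for $I(\vec{G})$.

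The main obstacle is purely orientation bookkeeping: making sure that the conventions line up so that the sign of $\alpha_e$ is correctly matched with the property ``$e$ points toward or away from $\bar{j}$'', after which the bijective count is forced. Everything else — the uniqueness of the tree flow, the cut formula $\alpha_e=\sum_{v\in S(e)}l_v$, and the harmlessness of the projection $p$ — is routine.
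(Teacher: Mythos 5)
Your proof is correct, but it takes a genuinely different route from the paper's. The paper argues by a local pigeonhole count: for each $i\in V_1$, the tree edges at $i$ all carry $i$ as tail, so the positive netflow $l_i>0$ forces at least one of them to carry positive flow; similarly each $\bar k\in V_2\setminus\{\bar j\}$ (whose tree edges all carry $\bar k$ as head) forces at least one incident tree edge to carry negative flow. Since these incident edge sets are disjoint across $V_1$, and disjoint across $V_2\setminus\{\bar j\}$, one gets at least $m$ positive and at least $n-1$ negative flows, and equality is forced because a spanning tree has exactly $m+n-1$ edges. You instead derive the exact cut formula $\alpha_e=\sum_{v\in S(e)}l_v$, read the sign of $\alpha_e$ directly off whether $\bar j\in S(e)$, and count the positive edges via the bijection between tree edges and non-root vertices in the tree rooted at $\bar j$. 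Both are valid; the paper's argument is shorter and avoids computing any $\alpha_e$, while yours is more explicit — it identifies precisely which edges carry positive flow (those whose child endpoint lies in $V_1$) rather than just counting them. One small wording issue: when you write that $\bar j$ lying on the head side of $e$ means ``$e$ points away from $\bar j$,'' the more standard phrasing would be ``$e$ points toward $\bar j$'' (the head, i.e.\ where $e$ points, is the near endpoint); but since you immediately restate your intended meaning (``tail is the farther endpoint'') and use it consistently, this does not affect the argument.
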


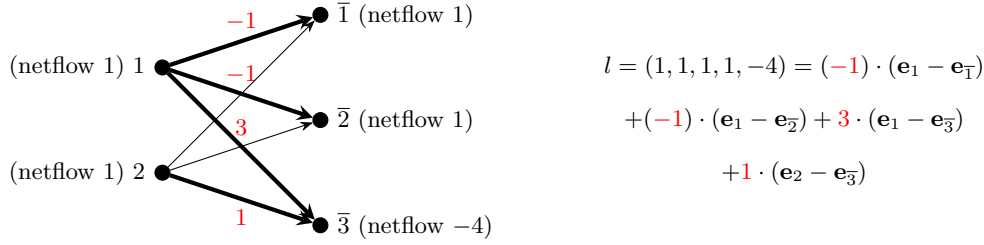
\begin{figure}
\centering
\begin{tikzpicture}[scale=.7] 
    \tikzstyle{o}=[circle,scale=0.6,fill,draw]
    	\node[o, label=left:{\small (netflow $1$) $1$}] (1) at (0, 4) {};
        \node[o, label=left:{\small (netflow $1$) $2$}] (2) at (0, 2) {};
        \node[o, label=right:{\small $\overline1$ (netflow $1$)}] (3) at (3,5) {};
        \node[o, label=right:{\small $\overline2$ (netflow $1$)}] (4) at (3, 3) {};
        \node[o, label=right:{\small $\overline3$ (netflow $-4$)}] (5) at (3, 1) {};
        \path[->,>=stealth,ultra thick]
    	(2) edge node [below] {\small \color{red} $1$} (5)
    	(1) edge node [above] {\small \color{red} $-1$} (3)
        (1) edge node [above] {\small \color{red} $-1$} (4)
    	(1) edge node [above] {\small \color{red} $3$} (5);
            \path[->,>=Stealth]
    		(2) edge node [left] {} (3)
    		(2) edge node [right] {} (4);
\node at (12,4) {\small $l=(1,1,1,1,-4)=({\color{red}-1})\cdot({\bf e}_{1}-{\bf e}_{\overline1})$};
\node at (12,3) {\small $+({\color{red}-1})\cdot({\bf e}_1-{\bf e}_{\overline2})+{\color{red}3}\cdot({\bf e}_1-{\bf e}_{\overline3})$};
\node at (12,2) {\small $+{\color{red}1}\cdot({\bf e}_2-{\bf e}_{\overline3})$};
\end{tikzpicture}
\caption{The flow values on  the edges of $\vec{G}$ with netflow vector $l$ coincide with the coefficients $\alpha_j$ from Definition  \ref{def:adm}.}
\label{fig:lem}
\end{figure}

\begin{proof} 
The bases of $M_{\vec{G}}$ are the spanning trees of $G$ and expressing some vector $l$ in terms of the vectors corresponding to a base (as in Definition \ref{def:adm}) means assigning flow values to each edge of the tree so that the netflow at each vertex is as prescribed by $l$.
That is, to say that a vector $l$ is $m$-admissible 
for $I(\vec{G})$ is to say that if we set the netflows of the vertices 
$\{1,\dots,m\}\sqcup \{\overline{1},\dots,\overline{n}\}$ to be the corresponding coordinates of $l$, and we fix any spanning tree $T$ of $G$, then exactly $n-1$ 
of the flows on the edges of $T$ will be negative and exactly $m$ 
 of the flows on the edges of $T$ will be positive. See Figure \ref{fig:lem} for an example. 

Now, for any vector $l$ as given in the Lemma, note that there are at least $m$ edges with positive 
flows, since all netflows on $\{1,\dots,m\}$ are positive, and there are at least $n-1$ flows that are negative
 because all but one of the netflows on $ \{\overline{1},\dots,\overline{n}\}$ are positive. However, there are exactly $m+n-1$ edges of any spanning tree of  $G$, and there is always a (unique) flow that we can put on it for any given netflow vector. Therefore, we indeed have exactly $m$ positive 
flows and $n-1$ negative 
flows. \end{proof}

 \begin{theorem} \label{cor:graphical} For a connected   bipartite graph $G$ with vertex bipartition $V=\{1,\dots,m\}\sqcup \{\overline{1},\dots,\overline{n}\}$, reference orientation $\vec{G}$ from $\{1,\dots,m\}$ to $\{\overline{1},\dots,\overline{n}\}$, and vector $l$ as in Lemma  \ref{flows}, we have 
  \begin{equation} \label{eq:}
  \sum_{p \in Z_{I(\vec{G}),l}^-\cap\Z^{m+n}}t^{h(p)}=f_{M_{\vec{G}}}(t)\cdot t^{n-1},
\end{equation} 
  where $h(x_1, \ldots, x_{m},x_{\overline1},\ldots,x_{\overline{n}})=x_1+\cdots+x_m$.
  In particular, the coefficients of $f_{M_{\vec{G}}}(t)$ count the numbers of integer points of $Z_{I(\vec{G}),l}^-$ falling on parallel hyperplanes of the form $x_1+\ldots+x_m=c$ 
  for integer constants $c$. 
 \end{theorem}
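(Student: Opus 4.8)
The plan is to obtain Theorem~\ref{cor:graphical} as the specialization of Theorem~\ref{thm:f} to the matrix $A = I(\vec G)$; essentially all the work is checking that the hypotheses of Theorem~\ref{thm:f} hold in this case.

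First I would record that $I(\vec G)$ is a flat unimodular arrangement of the correct dimension. Flatness is exactly Lemma~\ref{lem:?}: the linear form $h(x_1,\dots,x_m,x_{\overline 1},\dots,x_{\overline n}) = x_1 + \cdots + x_m$ takes the value $1$ on every column ${\bf e}_i - {\bf e}_{\overline j}$ of $I(\vec G)$, and this $h$ is the one appearing in the statement. Since $G$ is connected, the columns of $I(\vec G)$ span the subspace $W = \{x \mid \sum_i x_i + \sum_j x_{\overline j} = 0\}$, so $Z_{I(\vec G)}$ is $d$-dimensional with $d = m+n-1$, and Theorem~\ref{thm:f} applies with $k = m+n$. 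For unimodularity, recall from Section~\ref{sec:graph} that the incidence matrix of a digraph is totally unimodular; the projection $p$ introduced in the paragraph preceding Lemma~\ref{flows} is a lattice isomorphism $W \cap \Z^{m+n} \to \Z^{m+n-1}$ carrying the volume form declared on $W$ to the standard one on $\R^{m+n-1}$, and under $p$ the submatrix of $I(\vec G)$ indexed by a spanning tree $T$ becomes a square matrix of determinant $\pm 1$; hence $\Vol(\Pi_B) = 1$ for every basis $B$ of $M_{\vec G}$.

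Next, Lemma~\ref{flows} supplies an $m$-admissible vector $l$ for $I(\vec G)$: conditions (1)--(3) there were arranged precisely so that for every spanning tree $T$ the unique flow on $\Edg(T)$ realizing the netflow vector $l$ has exactly $m$ positive and $n-1$ negative entries. Applying Theorem~\ref{thm:f} with $A = I(\vec G)$, this $l$, and the above $h$ then yields
\[
\sum_{p \in Z_{I(\vec G), l}^- \cap \Z^{m+n}} t^{h(p)} = f_{I(\vec G)}(t)\cdot t^{d-m} = f_{I(\vec G)}(t)\cdot t^{n-1}.
\]
Finally, $f_{I(\vec G)} = f_{M_{\vec G}}$ by Remarks~\ref{rem:oriented} and~\ref{rem:graph}, since the polynomial depends only on the oriented matroid $M[I(\vec G)]$, which is the oriented graphic matroid of $\vec G$. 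The closing assertion of the theorem is then just a restatement: since $h(p) = p_1 + \cdots + p_m$, the level of an integer point $p \in Z_{I(\vec G), l}^-$ equals the value of $x_1 + \cdots + x_m$ at $p$, so the coefficient of $t^c$ in $f_{M_{\vec G}}(t)\cdot t^{n-1}$ counts the integer points of $Z_{I(\vec G), l}^-$ on the hyperplane $x_1 + \cdots + x_m = c$.

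The one place requiring care, and where a careless argument could slip, is the bookkeeping forced by $I(\vec G)$ not being full-dimensional in $\R^{m+n}$: one must confirm that the lattice $\Z^{m+n}$ restricted to $W$, the volume form chosen on $W$, and the identification $W \cong \R^{m+n-1}$ are mutually compatible, so that ``unimodular'' and ``$m$-admissible'' carry the meaning Theorem~\ref{thm:f} requires, and so that counting lattice points of $Z_{I(\vec G), l}^-$ in $\Z^{m+n}$ coincides with counting them in $W \cap \Z^{m+n}$. Because Theorem~\ref{thm:f} was phrased to allow $d \le k$ and $p$ is a lattice isomorphism onto $\Z^{m+n-1}$, this introduces no new combinatorial content; the substance is carried entirely by Lemma~\ref{flows} and Theorem~\ref{thm:f}.
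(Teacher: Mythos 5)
Your argument is correct and is essentially the paper's proof: both apply Theorem~\ref{thm:f} with $A = I(\vec G)$, using Lemma~\ref{flows} to supply the $m$-admissible vector $l$ and noting that $h(x) = x_1 + \cdots + x_m$ is the linear form witnessing flatness. Your write-up is more explicit about verifying the hypotheses (unimodularity, the dimension $d = m+n-1 < k = m+n$, the identification of $W$ with $\R^{m+n-1}$ via the projection $p$), but these checks are exactly the ones the paper sets up in the paragraph preceding Lemma~\ref{flows} and then invokes implicitly, so no new ideas are introduced.
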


 \begin{proof} Recall that $M_{\vec{G}}$ is isomorphic to $M[I(\vec{G})]$, and the linear form $h$ witnessing the flatness of $I(\vec{G})$ is the one given in the Theorem. With this in mind, \eqref{eq:}  follows from Theorem \ref{thm:f} and Lemma \ref{flows}. 
 \end{proof}

\subsection{The vector trimmed zonotope is a generalized permutahedron} The notion of a  \textbf{trimmed zonotope}  $Z_{G}^-$, for any connected graph $G$, is defined in \cite{P05} as follows:
 \begin{equation}Z_{G}^-=\{x \in \R^{|\Ver(G)|} \mid x+ \Delta_{\Ver(G)}\subset Z_{I(\vec{G})}\},\end{equation}
 where $\Delta_{\Ver(G)}=\conv({\bf e}_i \mid i \in [|\Ver(G)|])$ is the standard simplex in $\R^{|\Ver(G)|}$. Here, changing the reference orientation $\vec{G}$ only changes $Z_{I(\vec{G})}$ and $Z_{G}^-$ by a parallel translation.
 The trimmed zonotope $Z_{G}^-$ is a special type of  generalized permutahedron \cite{P05}. Next we prove that our vector trimmed zonotope $Z_{I(\vec{G}),l}^-$, from Theorem \ref{cor:graphical}, also belongs to this class. 

\begin{lemma} \label{lem:trim}
 For a connected bipartite graph $G$ and vector $l$ as in Lemma  \ref{flows}, we have that $Z_{G}^-$ is a parallel translate of $Z_{I(\vec{G}),l}^-$.  
 \end{lemma}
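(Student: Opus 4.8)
The plan is to show that the two trimmed zonotopes $Z_G^-$ and $Z_{I(\vec{G}),l}^-$ arise from trimming the \emph{same} zonotope $Z_{I(\vec{G})}$ by two objects that differ only by translation, and that trimming is translation-equivariant. First I would unpack the two definitions in parallel. On the one hand, $Z_G^- = \{x \mid x + \Delta_{\Ver(G)} \subset Z_{I(\vec{G})}\}$ is the set of all translates of the standard simplex that fit inside the zonotope; this is an \emph{erosion} (Minkowski difference) of $Z_{I(\vec{G})}$ by $\Delta_{\Ver(G)}$. On the other hand, by Theorem \ref{thm:f} (together with Lemma \ref{flows}) the lattice points of $Z_{I(\vec{G}),l}^-$ are exactly the lattice points $x \in Z_{I(\vec{G})}$ that can be pushed a little in the direction $l$ and stay inside; and by Lemma \ref{lem:par} each parallelepiped tile of the zonotopal tiling contributes its unique ``$l$-minimal'' vertex. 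So the heart of the matter is to identify which vertex of each tile $\Pi_B + \sum_{j \in \Ext_\rho(B)} a_j$ is picked out, and to see it is the same vertex picked out by the condition ``$x + \Delta_{\Ver(G)}$ fits in that tile,'' up to a global shift.

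The cleanest route, I think, is to argue directly at the level of normal fans / support functions rather than tile-by-tile. Both $Z_G^-$ and $Z_{I(\vec{G}),l}^-$ are integer polytopes inside the zonotope, and a standard fact (which I would phrase for a general polytope $P$) is that for a generic linear functional $\ell$, the $\ell$-trimming of $P$ — meaning $\conv$ of the lattice points $x \in P$ with $x + \epsilon\ell \in P$ — equals $\conv$ of the lattice points of $P$ lying in the face $P^{-\ell}$ minimized by $\ell$... no, more carefully: trimming keeps the points that are \emph{not} on the $\ell$-maximal boundary, which for a zonotope assembled from segments $[0,a_i]$ has a clean combinatorial description. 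I would instead compare $Z_G^-$ and $Z_{I(\vec G),l}^-$ by showing both equal, as polytopes (not just lattice-point sets, but the genericity of $l$ from Lemma \ref{flows} and the $m$-admissibility give enough rigidity), the same Minkowski summand. Concretely: erosion of $Z_{I(\vec G)}$ by the simplex $\Delta_{\Ver(G)}$ amounts to shrinking each defining inequality of $Z_{I(\vec G)}$ by the support function value of $\Delta_{\Ver(G)}$ in that facet normal direction; I would compute that the facet normals of the zonotope $Z_{I(\vec G)}$ are sums of rows of $I(\vec G)$ indexed by vertex subsets, i.e., cut covectors, and that the support function of $\Delta_{\Ver(G)}$ on such a covector is simply the size of one shore. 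Then I would compute $Z_{I(\vec G),l}^-$ the same way using Lemma \ref{lem:par}: the $l$-trimming of each parallelepiped tile $\Pi_B$ is a single vertex determined by the signs of the flow coefficients $\alpha_j$, and because $l$ from Lemma \ref{flows} has exactly one negative coordinate (at some $\bar j$), the retained vertex is governed by which shore of each fundamental cut contains $\bar j$. Tracking this shows the two erosions differ only in \emph{where} the subtracted constant is placed relative to the ambient coordinates, hence by a lattice translation.

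The key steps in order: (1) record that $Z_G^- = Z_{I(\vec G)} \ominus \Delta_{\Ver(G)}$ is an erosion and express it via shrunk facet inequalities, with facet normals the cut covectors $\chi_S$ and right-hand sides shifted by $\min_{i}\langle \chi_S, {\bf e}_i\rangle$; (2) use Lemma \ref{lem:par} and Theorem \ref{thm:tiling} to describe $Z_{I(\vec G),l}^-$ as the convex hull of the $l$-minimal vertices of all tiles, and translate this into the same facet description with right-hand sides shifted according to the position of the unique negative coordinate $\bar j$ of $l$; (3) observe that the two right-hand-side shifts in (1) and (2) differ, for every facet direction simultaneously, by the value of a \emph{fixed} linear functional evaluated on the facet normal — namely the functional whose coordinates encode ``vertex $\bar j$ versus the rest'' — which is precisely the signature of a translation; (4) conclude $Z_G^- = Z_{I(\vec G),l}^- + v$ for the corresponding $v \in \Z^{m+n}$.

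The main obstacle I anticipate is step (2)–(3): making the comparison of the two trimming prescriptions genuinely rigorous rather than ``morally clear.'' Erosion by a simplex and $l$-trimming are \emph{a priori} different operations, and the claim that they coincide up to translation for \emph{this particular} $l$ relies essentially on the special form of $l$ in Lemma \ref{flows} (one negative coordinate, all others positive, summing to zero), which makes the ``eroding object'' effectively a simplex with a distinguished vertex. I would want to verify carefully that the $m$-admissibility of $l$ (which Lemma \ref{flows} guarantees) forces every tile to be trimmed to exactly one vertex in a translation-consistent way, and that no tile is trimmed away entirely or kept whole — this is where a sign-bookkeeping argument on fundamental cuts, exactly parallel to the proof of Lemma \ref{flows}, will be needed. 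Once that bookkeeping is in place, the translation vector $v$ can be read off explicitly (something like $v = \sum_{i=1}^m {\bf e}_i$ or its negative, up to the projection conventions fixed before Lemma \ref{flows}), and the lemma follows.
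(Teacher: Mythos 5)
Your plan is in the right spirit---viewing $Z_G^-$ as a Minkowski erosion and comparing it to $Z_{I(\vec G),l}^-$---but it is considerably more roundabout than what the paper does, and it leaves the hardest step unexecuted. The paper's proof is essentially a two-line observation: first, the standard simplex $\Delta_{[m+n]}=\conv\{\mathbf e_i\}$ is a parallel translate of $\Delta^{\bar j}_{[m+n]}=\conv\{\mathbf e_i-\mathbf e_{\bar j}\}$, which has $0$ as a vertex; second, with $\bar j$ the position of the unique negative coordinate of $l$, one has $Z_{I(\vec G),l}^-=\{x\mid x+\Delta^{\bar j}_{[m+n]}\subset Z_{I(\vec G)}\}$. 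Because erosion commutes with translation of the eroding body, the two trimmed objects are translates. Your step (3)---observing that the facet right-hand-side shifts differ by a fixed linear functional, ``$\bar j$ versus the rest''---is secretly rediscovering the identity $\Delta_{[m+n]}=\Delta^{\bar j}_{[m+n]}+\mathbf e_{\bar j}$ at the level of support functions, but you arrive there by a detour through cut covectors that the paper avoids entirely.

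Two concrete issues. First, your worry that ``no tile is trimmed away entirely or kept whole'' is a non-issue: Lemma~\ref{lem:par} already guarantees that the $l$-trimming of every parallelepiped tile is a \emph{single} vertex, for any $m$-admissible $l$. The genuine crux---which neither you nor, admittedly, the paper's proof spells out in full---is the converse-type identification: that the set of integer points $x$ with $x+\epsilon l\in Z_{I(\vec G)}$ coincides with the integer points of the erosion $Z_{I(\vec G)}\ominus\Delta^{\bar j}_{[m+n]}$. The inclusion $\supseteq$ is immediate because $l$ lies in the cone generated at the vertex $0$ of $\Delta^{\bar j}_{[m+n]}$ (indeed $l=\sum_{i\neq\bar j} l_i(\mathbf e_i-\mathbf e_{\bar j})$ with all $l_i>0$); the inclusion $\subseteq$ requires a flow-and-cut bookkeeping argument on tiles, exactly parallel in flavor to the proof of Lemma~\ref{flows}. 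Your step (2)---``translate the convex hull of $l$-minimal vertices into a facet description''---is precisely this gap, and your proposal does not close it. Second, the translation vector you propose, $v=\sum_{i=1}^m\mathbf e_i$ (or its negative), is incorrect: chasing the definitions gives $Z_G^- = Z_{I(\vec G),l}^- - \mathbf e_{\bar j}$, so the translation is by $-\mathbf e_{\bar j}$, a single coordinate vector, not a sum over the bipartition class.
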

 
 \begin{proof} Note  that $\Delta_{[m+n]}=\conv({\bf e}_i \mid i \in [m+n])$ is a parallel translate of the simplex $\Delta_{[m+n]}^j=\conv({\bf e}_i-{\bf e}_j \mid i \in [m+n])$, for any fixed $j \in [m+n]$. As such, $Z_{G}^-$  is a  parallel translate of $\{x \in \R^{m+n} \mid x+ \Delta^j_{[m+n]}\subset Z_{I(\vec{G})}\}.$ On the other hand note that if $l$, as in Lemma  \ref{flows}, has its unique negative coordinate in position $j$, then we have that  $Z_{I(\vec{G}),l}^-=\{x \in \R^{m+n} \mid x+ \Delta^j_{[m+n]}\subset Z_{I(\vec{G})}\}.$ Thus the statement follows.  \end{proof}  
 
 We remark that the previous Lemma holds for any reference orientation $\vec{G}$, whereas the Theorem below is particular to the orientation from one side of the bipartition to the other. 
 
\begin{theorem} \label{thm} The coefficients of $f_{M_{\vec{G}}}(t)$ count the numbers of integer points of $Z_{G}^-$ falling on parallel hyperplanes of the form $x_1+\dots+x_m=c$ for integer constants $c$. 
 \end{theorem}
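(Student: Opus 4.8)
The plan is to deduce this immediately from Theorem~\ref{cor:graphical} and Lemma~\ref{lem:trim}, so the statement is really a corollary. Theorem~\ref{cor:graphical} gives
\[
\sum_{p \in Z_{I(\vec{G}),l}^-\cap\Z^{m+n}}t^{h(p)}=f_{M_{\vec{G}}}(t)\cdot t^{n-1},
\]
with $h(x_1,\dots,x_m,x_{\overline1},\dots,x_{\overline n})=x_1+\cdots+x_m$ and $l$ any vector as in Lemma~\ref{flows}. Reading this identity off coefficient by coefficient says that the numbers of integer points of $Z_{I(\vec{G}),l}^-$ on the hyperplanes $\{h=c\}$, listed in increasing order of the integer $c$, form precisely the coefficient sequence of $f_{M_{\vec{G}}}(t)$ (the factor $t^{n-1}$ only reindexes which $c$ goes with which power of $t$, and is irrelevant for the count). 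So it remains to move this statement from $Z_{I(\vec{G}),l}^-$ to $Z_G^-$.

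First I would extract from the proof of Lemma~\ref{lem:trim} the precise translation involved: taking the distinguished index to be the position $\overline j$ of the unique negative coordinate of $l$, one has the exact equalities
\[
Z_{I(\vec{G}),l}^-=\{x\in\R^{m+n}\mid x+\Delta_{[m+n]}^{\overline j}\subset Z_{I(\vec{G})}\},\qquad Z_G^-=\{x\in\R^{m+n}\mid x+\Delta_{[m+n]}\subset Z_{I(\vec{G})}\},
\]
and since $\Delta_{[m+n]}^{\overline j}=\Delta_{[m+n]}-{\bf e}_{\overline j}$, the two polytopes differ by translation by $\pm{\bf e}_{\overline j}$. I would then note the two properties of this translation vector that we need. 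It is integral, so translation by it is a bijection $Z_{I(\vec{G}),l}^-\cap\Z^{m+n}\to Z_G^-\cap\Z^{m+n}$; and because the unique negative coordinate of $l$ sits among the barred coordinates $\overline1,\dots,\overline n$ (hypothesis (2) of Lemma~\ref{flows}), we have $h({\bf e}_{\overline j})=0$, so the translation preserves the value of $h$, hence the hyperplane $\{x_1+\cdots+x_m=c\}$ on which each lattice point sits. Combining, $\sum_{p\in Z_G^-\cap\Z^{m+n}}t^{h(p)}=\sum_{p\in Z_{I(\vec{G}),l}^-\cap\Z^{m+n}}t^{h(p)}=f_{M_{\vec{G}}}(t)\cdot t^{n-1}$, which is exactly the claimed description of the coefficients of $f_{M_{\vec{G}}}(t)$.

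There is no real obstacle here; the only step deserving explicit mention rather than a ``by inspection'' is the bookkeeping of the previous paragraph --- confirming that the translating vector supplied by Lemma~\ref{lem:trim} is $\pm{\bf e}_{\overline j}$ for a \emph{barred} index $\overline j$, so that it is simultaneously integral (preserving lattice-point counts on each hyperplane) and killed by $h$ (so the hyperplanes match up on the nose). I would also remark, as the paper already flags, that this argument is specific to the standard orientation $\vec G$ from $V_1$ to $V_2$: that is what makes $h=x_1+\cdots+x_m$ the flatness functional of $I(\vec G)$ and what makes an $l$ as in Lemma~\ref{flows} available and $m$-admissible.
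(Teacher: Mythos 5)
Your proof is correct and takes the same route as the paper, which simply cites Theorem~\ref{cor:graphical} and Lemma~\ref{lem:trim} as "follows readily." The extra bookkeeping you supply --- that the translating vector $\pm\mathbf{e}_{\overline j}$ is integral (so lattice-point counts transfer) and lies in the kernel of $h$ because the unique negative coordinate of $l$ is barred (so the hyperplanes $\{h=c\}$ match exactly, not merely up to a shift) --- is precisely the detail the paper leaves implicit, and it is correct.
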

 
 \begin{proof} Follows readily from Theorem \ref{cor:graphical} and Lemma \ref{lem:trim}.
 \end{proof}

   \subsection{Log-concavity of  $f_{M_{\vec{G}}}$ and a special case of Fox's conjecture} We prove Theorem \ref{thm:intro-graphical} in this section: 
 
 \begin{theorem} \label{thm:graphical} Let  $G$ be a connected bipartite graph with bipartition $V_1 \sqcup V_2$. Let $\vec{G}$ be the orientation of $G$ where all edges are directed from $V_1$ to $V_2$.  Then the coefficients of  $f_{M_{\vec{G}}}$ are log-concave. 
 \end{theorem}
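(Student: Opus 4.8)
The plan is to reduce the statement, via Theorem~\ref{thm}, to the log-concavity of a lattice-point count on a generalized permutahedron, and then to run that reduction through the machinery of Lorentzian polynomials.

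\emph{Reduction.} First I would invoke Theorem~\ref{thm}: the coefficient of $t^c$ in $f_{M_{\vec G}}$ is the number $a_c$ of lattice points of the trimmed zonotope $Z_G^-$ on the hyperplane $x_1+\dots+x_m=c$. By Lemma~\ref{lem:trim} and \cite{P05}, $Z_G^-$ is a lattice generalized permutahedron: up to an integral translation it is the base polytope $B(r)$ of an integral submodular function $r$ on $[m+n]$, and it lies in a hyperplane $x([m+n])=N$. So it suffices to show that the slice counts $a_c=\#\!\left(B(r)\cap\{x(S)=c\}\cap\Z^{m+n}\right)$, with $S=\{1,\dots,m\}$, form a log-concave sequence with no internal zeros; the ``no internal zeros'' part is automatic, since the set of $c$ giving a nonempty slice is an interval.

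\emph{The Lorentzian step.} By the two-variable case of Br\"and\'en--Huh's characterization, log-concavity of $(a_c)$ with no internal zeros is \emph{equivalent} to the bivariate homogeneous polynomial $\sum_c \frac{a_c}{c!\,(N-c)!}\,x^c y^{N-c}$ being Lorentzian, so a Lorentzian witness must be produced. The route I would take is to analyze the parallel slices $B(r)\cap\{x(S)=c\}$ of the trimmed \emph{graphic} zonotope: I expect that here, fixing the total flow $x(S)$ out of the first color class decouples the inequalities supported on $S$ from those supported on its complement, so that each slice is a direct product and $a_c=u_c\,v_c$, where $u_c$ is the number of size-$c$ independent multisets of a polymatroid on $S$ (a restriction of $r$) and $v_c$ the number of size-$(N-c)$ independent multisets of a fixed polymatroid on $[m+n]\setminus S$ (a contraction). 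The Mason-type theorem for integral polymatroids --- a now-standard consequence of Lorentzian polynomial theory --- gives that $(u_c)$ and $(v_c)$ are each log-concave with no internal zeros; the Hadamard product of two such sequences with overlapping interval supports is again of that form, so $(a_c)=(u_c v_c)$ is log-concave with no internal zeros, which is the claim. (If the slices fail to split in the required generality, the fallback is to realize $a_c$ as a mixed volume $\binom{N}{c}V(K_1[c],K_2[N-c])$ of convex bodies $K_1,K_2$ built from $G$, and to use that volume polynomials are Lorentzian by Alexandrov--Fenchel; but I expect the graphic structure to make the direct argument go through.)

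\emph{Main obstacle.} The crux is the Lorentzian step, specifically the gap between a \emph{plain} lattice-point count and the quantities that Lorentzian theory handles automatically. One cannot simply say ``the normalized generating polynomial $\sum_p \mathbf x^p/p!$ of the M-convex set of lattice points of $Z_G^-$ is Lorentzian, now set $x_i=s$ for $i\le m$ and $x_i=t$ otherwise'': that only yields log-concavity of the \emph{multinomially weighted} counts $\sum_{p:\,|p_S|=c}1/p!$. And, as the Alexander polynomial $16+54t+77t^2+54t^3+16t^4$ illustrates --- it is log-concave but \emph{not} ultra-log-concave --- the sequence $(a_c)$ is genuinely only log-concave, so it cannot be extracted from a Lorentzian polynomial under the ``wrong'' normalization. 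One really must use the combinatorics of the trimmed graphic zonotope attached to the standard orientation of the bipartite graph, either to split the slices or to produce the mixed-volume description; that verification is where the work lies, and everything after it is formal, including the deduction of the trapezoidal property from log-concavity with no internal zeros.
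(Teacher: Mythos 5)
Your reduction via Theorem~\ref{thm} to slice counts $(a_c)$ of the generalized permutahedron $Z_G^-$ matches the paper, and the normalization subtlety you flag is real: the Lorentzian polynomial $\sum_p \mathbf x^p/p!$ specialized at $x_i\in\{s,t\}$ controls the weighted counts $\sum_{|p_S|=c}1/p!$, not the plain counts. But the conclusion you draw --- that one must therefore invent a slice-decoupling or a mixed-volume description --- is a misdiagnosis. The paper instead substitutes in the \emph{unnormalized} integer-point transform $i_{Z_G^-}(\mathbf x)=\sum_{\alpha\in Z_G^-\cap\Z^{m+n}}\mathbf x^\alpha$, which is \emph{denormalized} Lorentzian by Theorem~\ref{3.10}, and then uses Lemma~\ref{pak}: the class of denormalized Lorentzian polynomials is closed under identifying two variables. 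Iterating that identification shows that $\sum_c a_cq^ct^{k-c}$, whose coefficients are the plain slice counts, is itself denormalized Lorentzian, and Proposition~\ref{lem:log-concavity-of-coeffs} says precisely that the unnormalized coefficients of a homogeneous denormalized Lorentzian polynomial obey $c_\alpha^2\ge c_{\alpha+e_i-e_j}c_{\alpha-e_i+e_j}$, which in two variables is exactly $a_c^2\ge a_{c-1}a_{c+1}$. Your example $16+54t+77t^2+54t^3+16t^4$ is consistent with this: its bivariate homogenization need only be denormalized Lorentzian, not Lorentzian, so the failure of ultra-log-concavity you note is not an obstruction to this route.

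The claim at the heart of your main route --- that $Z_G^-\cap\{x(S)=c\}$ splits as a product so that $a_c=u_cv_c$ --- is unverified and dubious. Constraints of the form $x(T)\le r(T)$ for sets $T$ crossing $S$ genuinely couple the two coordinate blocks of a generalized permutahedron, and nothing you say about the bipartite graphic structure explains why they would cease to do so after fixing $x(S)$. The Alexandrov--Fenchel fallback is likewise unsupported, as no realization of $a_c$ as a mixed volume is given. So the gap is twofold: the step you designate as ``the crux'' rests on an unproved factorization, and you are unaware of the closure property (Lemma~\ref{pak}) that turns the normalization concern into a non-issue and renders both of your detours unnecessary.
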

 
 In order to obtain a proof of the above, we need the tool of Lorentzian polynomials. This section briefly summarizes everything that is needed about them; for an in-depth treatment and general definition of these polynomials, see the seminal work of Br\"and\'en and Huh \cite{bh2019}, where these polynomials were introduced. We will not give a full definition of the set of Lorentzian polynomials here, instead take the following results as a way of obtaining certain denormalized Lorentzian polynomials arising from integer point transforms of generalized permutahedra.

\begin{theorem}[{\cite[Theorem 3.10]{bh2019}}]
\label{3.10}
Let $J$ be the set of integer points of a generalized permutahedron $P$. Then the integer point transform   
$i_P({\bf x})=\sum_{{\bf \alpha} \in J}{\bf x}^{\bf \alpha}$ of $P$ is a denormalized Lorentzian polynomial. 
\end{theorem}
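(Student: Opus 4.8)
\textit{The plan.} Recall that a homogeneous polynomial $f=\sum_\alpha c_\alpha\mathbf{x}^\alpha$ with nonnegative coefficients is \emph{denormalized Lorentzian} exactly when its normalization $N(f):=\sum_\alpha (c_\alpha/\alpha!)\,\mathbf{x}^\alpha$, where $\alpha!:=\prod_i\alpha_i!$, is a Lorentzian polynomial; so the task is to prove that $N(i_P)=\sum_{\alpha\in J}\mathbf{x}^\alpha/\alpha!$ is Lorentzian. I would use three facts from \cite{bh2019}: (i) for fixed degree $d$ and number of variables $n$, the set of Lorentzian polynomials is stable under multiplication by positive scalars and is a \emph{closed} subset of the finite-dimensional space of homogeneous degree-$d$ forms; (ii) if $g(\mathbf{y})$ is Lorentzian and $A$ is a real matrix with nonnegative entries, then $g(A\mathbf{x})$ is Lorentzian; (iii) the basis generating polynomial $g_M(\mathbf{y})=\sum_{B}\prod_{e\in B}y_e$ of any matroid $M$ (summed over bases $B$) is a multiaffine Lorentzian polynomial. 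Since $i_P$ is an honest polynomial we have $J\subseteq\Z^n_{\ge 0}$, and replacing $P$ by $\conv(J)$ changes neither $J$ nor $i_P$; as the lattice points of a generalized permutahedron form an M-convex set whose convex hull is an integral generalized permutahedron with no further lattice points, I may assume $P$ is the base polytope of an integer polymatroid $\Pi$ on $[n]$ with integer rank function $r$ and $r([n])=d$. Then every point of $J$ satisfies $\sum_i x_i=d$, so $i_P$ is homogeneous of degree $d$, and $J$ is precisely the set of integer bases of $\Pi$.

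\textit{Blow-up and substitution.} For each sufficiently large integer $\mu$ I would form the matroid $M_\mu$ on the ground set $E=E_1\sqcup\cdots\sqcup E_n$ with $|E_i|=\mu$ and rank function $\rho(A)=\min_{S\subseteq[n]}\bigl(r(S)+|\{e\in A:\pi(e)\notin S\}|\bigr)$, where $\pi\colon E\to[n]$ is the fibered projection sending $E_i$ to $i$. Integrality and submodularity of $r$ make $\rho$ a matroid rank function with $\rho(E)=d$, and a subset $B\subseteq E$ of size $d$ is a basis iff $\sum_{i\in S}|B\cap E_i|\le r(S)$ for all $S$; this condition depends only on the profile $(|B\cap E_i|)_i$, so for a fixed profile either every choice of subsets of the prescribed sizes is a basis or none is. Hence the number of bases of $M_\mu$ with $|B\cap E_i|=\alpha_i$ for all $i$ equals $\prod_i\binom{\mu}{\alpha_i}$ when $\alpha\in J$ and $0$ otherwise. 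Applying the $0/1$ (in particular nonnegative) substitution $y_e\mapsto x_{\pi(e)}$ to $g_{M_\mu}$ and invoking (iii) and (ii) shows that
\[
F_\mu(\mathbf{x})\ :=\ g_{M_\mu}\bigl(x_{\pi(e)}:e\in E\bigr)\ =\ \sum_{\alpha\in J}\Bigl(\prod_i\binom{\mu}{\alpha_i}\Bigr)\mathbf{x}^\alpha
\]
is a Lorentzian polynomial of degree $d$.

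\textit{Passing to the limit.} Dividing by the positive constant $\mu^{d}$ keeps $F_\mu$ Lorentzian by (i). Because $\sum_i\alpha_i=d$ for every $\alpha\in J$, we have $\prod_i\binom{\mu}{\alpha_i}=\mu^{d}/\alpha!+O(\mu^{d-1})$, so the coefficientwise limit of $\mu^{-d}F_\mu$ as $\mu\to\infty$ is $\sum_{\alpha\in J}\mathbf{x}^\alpha/\alpha!=N(i_P)$, a computation taking place in the finite-dimensional space of degree-$d$ forms. By the closedness asserted in (i), $N(i_P)$ is Lorentzian, which is exactly the statement that $i_P$ is denormalized Lorentzian.

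\textit{Main obstacle.} The homogeneity bookkeeping and the asymptotics of the binomial coefficients are routine; the real content lies in two places. On the analytic side one must cite carefully that matroid basis generating polynomials are Lorentzian and that the Lorentzian class is stable under nonnegative linear substitutions, both from \cite{bh2019}. On the combinatorial side the crux is the blow-up correspondence between the integer polymatroid $\Pi$ and the matroids $M_\mu$, together with the \emph{exact} count $\prod_i\binom{\mu}{\alpha_i}$ of bases within each profile: it is the symmetry of $M_\mu$ inside each fibre $E_i$ that makes this count a product of binomials, and it is precisely this structure (rather than, say, an arbitrary weighting) that forces the normalized limit to recover the all-ones transform $i_P$ rather than some other polynomial with the same M-convex support.
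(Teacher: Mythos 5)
The paper never proves this statement---it is quoted from Br\"and\'en and Huh and used as a black box---so your argument is necessarily a different route, and it is a correct one given the three ingredients you cite. The reduction of the generalized-permutahedron/M-convex case to the matroid case via the blow-up $M_\mu$, the exact count $\prod_i\binom{\mu}{\alpha_i}$ of bases within each fibre profile, the nonnegative substitution $y_e\mapsto x_{\pi(e)}$, and the limit $\mu^{-d}F_\mu\to N(i_P)$ inside the closed Lorentzian cone are all sound; this blow-up-and-limit passage from matroids to integer polymatroids is a legitimate and known alternative to Br\"and\'en--Huh's direct verification of their support/Hessian characterization. Two caveats are worth making explicit. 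First, essentially all of the depth sits in your fact (iii): in \cite{bh2019} the Lorentzian property of matroid basis generating polynomials is itself the multiaffine specialization of the very theorem you are proving (normalization is the identity on multiaffine polynomials), so to avoid circularity you must anchor (iii) in an independent source---the Hessian characterization of Lorentzian polynomials, or the complete log-concavity theorem of Anari, Liu, Oveis Gharan and Vinzant. Second, your preliminary reduction silently uses that the lattice points of a generalized permutahedron form an M-convex set and that translating $P$ into the nonnegative orthant (which multiplies $i_P$ by a monomial) preserves the denormalized Lorentzian property; both are standard for integral $P$, which is the only case the paper needs (the trimmed zonotope $Z_G^-$), but they deserve a citation since the statement as phrased allows arbitrary $P$.
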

 
 \textbf{Generalized permutahedra}, defined as those polytopes whose edge directions are parallel to the root directions ${\bf e}_i-{\bf e}_j$, were extensively studied in \cite{P05}. Initially introduced by Edmonds \cite{Edmonds} in  1970 under a different name, they have appeared in the mathematical literature in many different guises. Integer points of integral generalized permutahedra are also known as M-convex sets and play a fundamental role in the theory of Lorentzian polynomials, which Br\"and\'en and Huh \cite{bh2019} introduced as the first comprehensive tool for proving log-concavity results. Lorentzian polynomials, under the name of completely log-concave polynomials, were also independently developed by Anari, Liu, Oveis Gharan, and   Vinzant  \cite{anari1, anari2, anari3}.   

\medskip

 Using the following lemma we can generate other denormalized Lorentzian polynomials from integer point transforms of generalized permutahedra:

\begin{lemma}\label{pak}\cite[Lemma 4.8]{pak} If $f(x_1, x_2, x_3, \ldots, x_n)\in \mathbb{R}_{\geq 0}[x_1, \ldots, x_n]$ is a denormalized Lorentzian polynomial, then  $f(x_1, x_1, x_3,  x_4, \ldots, x_n)$ is also a denormalized Lorentzian polynomial.
\end{lemma}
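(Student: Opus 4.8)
The plan is to unwind the definition and reduce to a closure property of genuinely Lorentzian polynomials. Recall that $f=\sum_\alpha c_\alpha x^\alpha$ is denormalized Lorentzian exactly when its normalization $N(f)=\sum_\alpha\tfrac{c_\alpha}{\alpha!}x^\alpha$ is Lorentzian. Writing $S\colon h(x_1,x_2,x_3,\dots)\mapsto h(x_1,x_1,x_3,\dots)$, what must be shown is that $N\!\bigl(S(f)\bigr)$ is Lorentzian, i.e.\ that the operator $N\,S\,N^{-1}$ preserves Lorentzian polynomials. The key point to keep in mind — and the reason the undecorated statement (``$f$ itself Lorentzian implies $S(f)$ Lorentzian'') is false while the denormalized one holds — is that $S$ does not commute with $N$: on the level of coefficients, $N\,S\,N^{-1}$ sends the coefficient of $x_1^{\,p}\prod_{j\ge3}x_j^{\beta_j}$ in $N(f)$ to the binomially reweighted sum $\sum_{a+b=p}\binom{p}{a}^{-1}[N(f)]_{(a,b,\beta_3,\dots)}$, which is \emph{not} the pullback of $N(f)$ along the merge map $\phi\colon(x_1,x_3,\dots,x_n)\mapsto(x_1,x_1,x_3,\dots,x_n)$. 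So one cannot simply push the substitution through $N(f)$.

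The route I would take is via polarization. By \cite{bh2019}, $f$ is denormalized Lorentzian if and only if its polarization $\Pi(f)$ — a multiaffine polynomial — is Lorentzian. The decisive feature of the multiaffine world is that the pure second-order terms $\partial_{z_1}^2,\partial_{z_2}^2$ that obstruct the naive argument vanish identically, so merging the block of polarizing variables attached to $x_1$ with the block attached to $x_2$ is well behaved. Concretely, I would realize $\Pi\!\bigl(S(f)\bigr)$ as the image of $\Pi(f)$ under this block-merge, argue that the merge keeps the polynomial Lorentzian (using the multiaffine vanishing), and conclude that $\Pi(S(f))$ is Lorentzian, hence $S(f)$ is denormalized Lorentzian. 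The support condition built into all of this is separately fine: $\operatorname{supp}(S(f))$ is the image of $\operatorname{supp}(f)$ under $\alpha\mapsto(\alpha_1+\alpha_2,\alpha_3,\dots,\alpha_n)$, and merging two ground-set elements carries the lattice points of a (poly)matroid base polytope to the lattice points of another one, so M-convexity is preserved. A down-to-earth alternative is to run the derivative-based characterization directly: $\partial_{x_j}$ for $j\ge3$ commutes with $N\,S\,N^{-1}$, so those cases drop in degree by induction, and the remaining cases — derivatives in the merged variable — are handled by verifying that the binomially weighted log-concavity inequalities characterizing denormalized Lorentzian polynomials survive the partial summation $c_\alpha\mapsto\sum_{a+b=p}\binom{p}{a}^{-1}c_{(a,b,\dots)}$.

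As a sanity check in degree $2$: putting $g=N(f)$ with Hessian $H$, a direct computation gives that the Hessian of $N(S(f))$ equals $\phi^{\mathsf T}H\phi-[g]_{e_1+e_2}\,e_1e_1^{\mathsf T}$ — the restriction of $H$ to the diagonal subspace $\{v\mid v_1=v_2\}$ minus a nonnegative multiple of a rank-one positive semidefinite matrix. Restricting a quadratic form to a subspace cannot increase the number of positive eigenvalues, and subtracting a positive semidefinite matrix only decreases eigenvalues, so $N(S(f))$ still has at most one positive eigenvalue (together with nonnegative coefficients and M-convex support), hence is Lorentzian. The main obstacle I anticipate is exactly the step for derivatives in the merged variable: there the chain rule produces a sum of two terms, and since Lorentzian polynomials are not closed under addition, one genuinely needs the extra rigidity supplied by the normalization weights. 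Making that precise — either through the block-merge on $\Pi(f)$ and a careful matching of normalization constants, or through the explicit inequality manipulation above — is where the real work lies.
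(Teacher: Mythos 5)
First, a point of reference: the paper does not prove this lemma at all --- it is imported verbatim from \cite[Lemma 4.8]{pak} --- so there is no in-paper argument to compare yours against and your attempt must stand on its own. It does not. A preliminary issue is the claim framing your whole discussion: the ``undecorated'' statement you declare false (that $f$ Lorentzian implies $f(x_1,x_1,x_3,\ldots,x_n)$ Lorentzian) is in fact \emph{true}; it is the special case, for the merge map $\phi$, of the Br\"and\'en--Huh theorem that Lorentzian polynomials are closed under pullback along any linear map with a nonnegative matrix. The genuine difficulty, which your coefficient computation does correctly isolate, is only that this true statement fails to imply the denormalized one because $S$ and $N$ do not commute: what comes for free is that $S(N(f))$ is Lorentzian, whereas what is needed is that $N(S(f))$ is, and the two differ by exactly the binomial weights $\binom{p}{a}$ you wrote down.

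The fatal gap is that your main route lands on the wrong one of these two polynomials. Whatever ``block-merge'' means precisely, relabelling the polarizing variables of $x_2$ as additional polarizing variables of $x_1$ in $\Pi^{\uparrow}(N(f))$ yields a multiaffine polynomial whose diagonal restriction is $N(f)(y_1,y_1,y_3,\ldots)=S(N(f))$, and symmetrizing over the merged block does not change the diagonal restriction; hence the merged polynomial is (after symmetrization) the polarization of $S(N(f))$, \emph{not} $\Pi^{\uparrow}(N(S(f)))$. One can check directly that the coefficient of $e_p$ in the merged, symmetrized polarization is $\frac{1}{p!\binom{\kappa_1+\kappa_2}{p}}\sum_{a+b=p}\binom{p}{a}c_{(a,b,\gamma)}$ while the polarization of $N(S(f))$ carries $\frac{1}{p!\binom{\kappa_1+\kappa_2}{p}}\sum_{a+b=p}c_{(a,b,\gamma)}$: the binomial discrepancy you flagged in your first paragraph is not dissolved by passing to the multiaffine world, it simply reappears there. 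So even granting every Lorentzian-preservation claim in that paragraph, you would have proved only the easy statement. Your fallback (induction on derivatives) stalls exactly where you say it does --- the derivative in the merged variable decomposes $N(S(f))$ into a sum of pieces, and denormalized Lorentzian polynomials are not closed under addition --- and you explicitly defer that step. The degree-$2$ Hessian computation is correct and is a complete proof for $d=2$, but nothing in the proposal upgrades it to general degree. A workable proof requires machinery you never bring to bear, for instance Br\"and\'en--Huh's linear-operator preservation theorem applied to the operator $N\,S\,N^{-1}$ (whose symbol must then be shown to be Lorentzian); as it stands, the proposal identifies the obstruction accurately but does not overcome it.
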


\begin{theorem} \label{thm:sections} Given a generalized permutahedron $P$, let $\ldots, 0, 0, 0, a_0, \ldots, a_k, 0, 0, 0, \ldots$ count the numbers of integer points of $P$ falling on parallel hyperplanes of the form $x_1+\dots+x_m=c$ for consecutive integer constants $c$,  where $a_0, a_k \neq 0$. Then the polynomial $\sum_{i=0}^k a_i q^i t^{k-i}$ is denormalized Lorentzian. 
\end{theorem}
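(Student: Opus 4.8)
The plan is to reduce Theorem~\ref{thm:sections} to Theorem~\ref{3.10} together with Lemma~\ref{pak} by a change of variables that isolates the linear functional $x_1+\cdots+x_m$ as a new coordinate direction. First I would apply Theorem~\ref{3.10} to obtain that the integer point transform $i_P(x_1,\ldots,x_N)=\sum_{\alpha\in J}x^\alpha$ of $P$ is a denormalized Lorentzian polynomial, where $J$ is the set of integer points of $P$. Denormalized Lorentzianness is preserved under the operations we will need: diagonalization (identifying two variables), as given by Lemma~\ref{pak}, and — routinely — invertible affine/monomial substitutions and setting variables equal to $1$ that do not interfere with the homogeneity needed. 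The key observation is that grouping the integer points of $P$ according to the value $c=\alpha_1+\cdots+\alpha_m$ exactly records the coefficients $a_i$: the number of lattice points of $P$ on the hyperplane $x_1+\cdots+x_m=c_0+i$ (where $c_0$ is the smallest such value attained) is $a_i$.

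The concrete mechanism I would use: in $i_P(x_1,\ldots,x_N)$, specialize $x_1=x_2=\cdots=x_m=q$ and $x_{m+1}=\cdots=x_N=1$. By Lemma~\ref{pak} applied $m-1$ times, followed by the (elementary) fact that specializing variables to $1$ in a denormalized Lorentzian polynomial—after homogenizing—preserves the property, we obtain that $\sum_{\alpha\in J}q^{\alpha_1+\cdots+\alpha_m}=\sum_i a_i\,q^{c_0+i}$ is, up to the monomial factor $q^{c_0}$ and up to homogenization, denormalized Lorentzian in the single variable $q$. Finally I would homogenize this one-variable polynomial $\sum_i a_i q^i$ to the two-variable form $\sum_{i=0}^k a_i q^i t^{k-i}$; homogenization of a univariate polynomial to a bivariate form is exactly the passage that turns "real-rootedness / ultra-log-concavity in one variable" into the two-variable Lorentzian condition, and a denormalized Lorentzian polynomial in two variables of degree $k$ is precisely a polynomial whose coefficient sequence (after the standard normalization) is log-concave with no internal zeros. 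This is the shape in which log-concavity of the $a_i$ — which is what we ultimately want for $f_{M_{\vec G}}$ via Theorem~\ref{thm} — is encoded.

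The main obstacle I anticipate is bookkeeping around homogenization and normalization rather than anything deep: one must be careful that $i_P$ is a priori \emph{not} homogeneous (the lattice points of a polytope do not all have the same coordinate sum), so Theorem~\ref{3.10} and Lemma~\ref{pak} must be invoked in the form that handles denormalized (i.e.\ not-necessarily-homogeneous) Lorentzian polynomials, and the final repackaging into $\sum a_i q^i t^{k-i}$ requires checking that homogenizing in a fresh variable $t$ sends denormalized Lorentzian polynomials to (denormalized) Lorentzian polynomials — this is standard for the Brändén–Huh class but should be cited precisely. A secondary point to get right is that the substitution $x_i\mapsto 1$ for $i>m$ does not destroy the property: this is legitimate because setting a variable to $1$ is the composition of homogenizing that variable away and is covered by the closure properties in \cite{bh2019}, but I would state it explicitly as a lemma or cite the relevant closure result to keep the argument self-contained. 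No new ideas beyond assembling Theorems~\ref{3.10} and~\ref{pak} with the standard toolkit should be needed.
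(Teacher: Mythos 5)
Your overall strategy matches the paper's: invoke Theorem~\ref{3.10} for the integer point transform, collapse variables via Lemma~\ref{pak}, and then massage the result into the stated bivariate form. However, there are two issues, one a misconception and one a genuine gap that you flag but do not close.

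First, the misconception. You worry that ``$i_P$ is a priori not homogeneous (the lattice points of a polytope do not all have the same coordinate sum).'' For a generalized permutahedron this worry is unfounded and in fact self-defeating: generalized permutahedra live in an affine hyperplane $x_1+\cdots+x_N=s$ for a fixed $s$, so every lattice point has coordinate sum $s$ and $i_P$ \emph{is} homogeneous of degree $s$. This is not a technicality to route around --- it is what makes Theorem~\ref{3.10} meaningful at all, since Lorentzian (and hence denormalized Lorentzian) polynomials are homogeneous by definition.

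Second, the gap. Your mechanism --- set $x_{m+1}=\cdots=x_N=1$, observe a univariate polynomial $\sum_i a_i q^{c_0+i}$, strip off $q^{c_0}$, then re-homogenize in a fresh variable $t$ --- is both awkward and not justified at the key step. The cleaner and equivalent move is to apply Lemma~\ref{pak} twice over: identify $x_1,\dots,x_m$ to $q$ \emph{and} identify $x_{m+1},\dots,x_N$ to $t$. This yields the genuinely homogeneous bivariate polynomial $\sum_c a_{c-c_0}\,q^c t^{s-c}$, which by Lemma~\ref{pak} is denormalized Lorentzian. What remains --- and what you hand-wave with ``up to the monomial factor'' --- is that dividing by the monomial $q^{c_0}t^{\,s-c_0-k}$ preserves the denormalized Lorentzian property. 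This does hold, because for a homogeneous polynomial $f$ divisible by $x_i$ one has $N(f/x_i)=\partial_i N(f)$ (the factorial in the denominator absorbs the derivative), and Lorentzian polynomials are closed under partial derivatives; but you neither prove nor cite this closure fact. The paper closes exactly this gap by citing \cite[Corollary 3.8]{hafner2023logconcavity}. Without that citation or an equivalent argument, your proof as written does not establish the theorem.
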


\begin{proof} The statement follows from Theorem \ref{3.10}, Lemma \ref{pak} and \cite[Corollary 3.8]{hafner2023logconcavity}.\end{proof}

\begin{corollary} \label{cor} When homogenized, the polynomial $f_{M_{\vec{G}}}(t)$, for a connected bipartite graph $G$ and its orientation $\vec{G}$ from one side of the bipartition to the other, is denormalized Lorentzian.
\end{corollary}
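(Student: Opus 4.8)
The plan is to read off Corollary \ref{cor} by composing Theorem \ref{thm} with Theorem \ref{thm:sections}. Theorem \ref{thm} expresses the coefficients of $f_{M_{\vec{G}}}$ as the numbers of lattice points of the trimmed zonotope $Z_G^-$ lying on the successive parallel hyperplanes $x_1+\dots+x_m=c$, and Theorem \ref{thm:sections} says that for a generalized permutahedron such a list of hyperplane-slice counts assembles into a two-variable denormalized Lorentzian polynomial. So there is essentially no new content: the argument is just the translation between these two pictures.

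In detail: $Z_G^-$ is a generalized permutahedron (recorded in the text before Lemma \ref{lem:trim}, following \cite{P05}), so Theorem \ref{thm:sections} applies with $P=Z_G^-$. Write $f_{M_{\vec{G}}}(t)=\sum_{i=0}^{L}a_i t^i$. By Theorem \ref{thm}, $a_i$ equals the number of lattice points of $Z_G^-$ on the hyperplane $x_1+\dots+x_m=c$ as $c$ runs over consecutive integers, these hyperplanes exhausting the nonempty slices. Theorem \ref{thm:sections}, applied to $P=Z_G^-$ with $k=L$, then yields that $\sum_{i=0}^{L}a_i q^i t^{L-i}$ is denormalized Lorentzian; this bivariate polynomial is the homogenization of $f_{M_{\vec{G}}}(t)$, which is the claim.

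The one point deserving a sentence is that the homogenization of $f_{M_{\vec{G}}}$ really is the polynomial produced by Theorem \ref{thm:sections}, for which both extreme coefficients $a_0,a_L$ must be nonzero. For $a_L$ this is automatic. For $a_0=f_{M_{\vec{G}}}(0)$: since $A=I(\vec{G})$ is flat with $h(a_i)=1>0$ for every column, the origin is the unique minimizer of $h$ on $Z_A$ and hence a vertex of $Z_A$; in the zonotopal tiling of Theorem \ref{thm:tiling} the tile containing the origin must then be some $\Pi_B$ with $\Ext_\rho(B)=\emptyset$, so $\ext_\rho(B)=0$ and $f_{M_{\vec{G}}}(0)\geq\Vol(\Pi_B)>0$. (Alternatively one can avoid this remark by invoking closure of denormalized Lorentzian polynomials under multiplication by a monomial, since the homogenization of $f_{M_{\vec{G}}}$ and the Theorem \ref{thm:sections} polynomial differ at most by such a factor.)

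I do not expect a real obstacle here: Theorems \ref{thm} and \ref{thm:sections} are already available, and the corollary is precisely their combination. If any step merits emphasis, it is the nonvanishing of the constant term of $f_{M_{\vec{G}}}$ noted above, which makes the passage to the literal homogenization clean.
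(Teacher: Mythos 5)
Your proposal matches the paper's proof exactly: it combines Theorem \ref{thm} with Theorem \ref{thm:sections}, using the fact (recorded from \cite{P05}) that $Z_G^-$ is a generalized permutahedron. The extra remark about nonvanishing of $a_0$ is a fine sanity check, but it is not part of the paper's argument and is not needed since both approaches arrive at the same conclusion.
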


\begin{proof} By Theorem \ref{thm}  the coefficients of $f_{M_{\vec{G}}}(t)$ count the numbers of integer points of $Z_{G}^-$ falling on parallel hyperplanes of the form $x_1+\dots+x_m=c$ for integer constants $c$. The trimmed zonotope $Z_{G}^-$ is a generalized permutahedron \cite{P05}.  Thus the result follows by Theorem \ref{thm:sections}.\end{proof}


The following key property of denormalized Lorentzian polynomials is what we need to complete the  proof of Theorem \ref{thm:graphical}:

\begin{proposition}[{\cite[Proposition 4.4]{bh2019}}]
\label{lem:log-concavity-of-coeffs}
If $f(\mathbf x) = \sum_\alpha c_\alpha \mathbf x^\alpha$ is a homogeneous polynomial in $n$ variables so that $f$ is denormalized Lorentzian, then for any $\alpha\in\mathbb{N}^n$ and any $i,j \in [n]$, the inequality
\[
c_\alpha^2 \geq c_{\alpha + e_i - e_j} c_{\alpha - e_i + e_j}
\]
holds.
\end{proposition}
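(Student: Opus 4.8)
The plan is to deduce the inequality from the behaviour of Lorentzian polynomials under differentiation, reducing everything to a statement about the Hessian of a quadratic form. Write $N$ for the normalization operator $c_\alpha\mathbf x^\alpha\mapsto (c_\alpha/\alpha!)\,\mathbf x^\alpha$, so that by hypothesis $g:=Nf$ is a Lorentzian polynomial, homogeneous of degree $d:=\deg f$, and adopt the convention $c_\beta=0$ for $\beta\notin\mathbb N^n$. After disposing of the trivial cases ($i=j$, where the inequality reads $c_\alpha^2\ge c_\alpha^2$; one of $\alpha\pm(e_i-e_j)$ lying outside $\mathbb N^n$, where the right-hand side vanishes and $c_\alpha^2\ge 0$ finishes it; or $|\alpha|\neq d$, where all three coefficients appearing are $0$), we may assume $i\neq j$, $|\alpha|=d$, and $\alpha_i,\alpha_j\ge 1$. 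After relabelling, take $i=1$, $j=2$, and set $\gamma:=\alpha-e_1-e_2\in\mathbb N^n$.

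First I would reduce to the quadratic case. Since the class of Lorentzian polynomials is closed under the operators $\partial_{x_k}$ \cite{bh2019}, the iterated derivative $q:=\partial^\gamma g$ is either identically zero --- in which case the coefficients of $x_1x_2$, $x_1^2$, $x_2^2$ in it all vanish and the inequality $0\ge 0$ holds --- or a homogeneous Lorentzian polynomial of degree $d-|\gamma|=2$. Next I would read off the relevant entries of its Hessian. From $\partial^\gamma\mathbf x^\beta=\frac{\beta!}{(\beta-\gamma)!}\mathbf x^{\beta-\gamma}$ one gets that the coefficient of $\mathbf x^\delta$ in $q$ equals $c_{\delta+\gamma}/\delta!$; taking $\delta=e_1+e_2,\ 2e_1,\ 2e_2$ shows that the coefficients of $x_1x_2$, $x_1^2$, $x_2^2$ in $q$ are $c_\alpha$, $\tfrac12 c_{\alpha+e_1-e_2}$, $\tfrac12 c_{\alpha-e_1+e_2}$. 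Hence the $\{1,2\}$-principal block of the constant Hessian $H=\nabla^2 q$ is $B=\left(\begin{smallmatrix} c_{\alpha+e_1-e_2} & c_\alpha\\ c_\alpha & c_{\alpha-e_1+e_2}\end{smallmatrix}\right)$.

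The last step is elementary linear algebra. By the degree-$2$ characterization of Lorentzian polynomials \cite{bh2019}, $H$ has at most one positive eigenvalue; by Cauchy's interlacing theorem the same is then true of its principal submatrix $B$. Writing the eigenvalues of $B$ as $\mu_1\ge\mu_2$ with $\mu_2\le 0$: if $\mu_1>0$ then $\det B=\mu_1\mu_2\le 0$, while if $\mu_1\le 0$ then $\operatorname{tr}B=\mu_1+\mu_2\le 0$, but $\operatorname{tr}B=c_{\alpha+e_1-e_2}+c_{\alpha-e_1+e_2}\ge 0$ since $q$ has nonnegative coefficients, forcing $\mu_1=\mu_2=0$ and $\det B=0$. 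In all cases $\det B\le 0$, i.e. $c_{\alpha+e_1-e_2}c_{\alpha-e_1+e_2}\le c_\alpha^2$, which is the claim with $1,2$ relabelled as $i,j$.

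The main obstacle is not any single step above --- each is short --- but the fact that the two inputs doing the real work, namely the closure of the Lorentzian class under coordinate differentiation and the Hessian-signature description in degree two, are themselves nontrivial results of Br\"and\'en--Huh and are not among the lemmas quoted earlier in this paper. The honest options are therefore to cite \cite[Proposition 4.4]{bh2019} as a black box (as the surrounding exposition does) or to additionally import those two structural lemmas and then carry out the assembly above. A secondary point deserving care is the case $c_\alpha=0$: the argument still applies verbatim, since $\det B\le 0$ together with the nonnegativity of the entries of $B$ forces $c_{\alpha+e_1-e_2}c_{\alpha-e_1+e_2}=0$, which is consistent with (and in fact reproves) the M-convexity of the support of $f$.
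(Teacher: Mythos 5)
Your argument is correct. Note first that the paper does not prove this statement at all: it is quoted verbatim as \cite[Proposition 4.4]{bh2019} and used as a black box, so there is no ``paper proof'' to compare against line by line. What you have written is, in essence, a reconstruction of the Br\"and\'en--Huh proof itself: reduce to the quadratic case by applying $\partial^\gamma$ with $\gamma=\alpha-e_i-e_j$, identify the relevant $2\times 2$ principal block of the Hessian of $\partial^\gamma(Nf)$ as $\left(\begin{smallmatrix} c_{\alpha+e_i-e_j} & c_\alpha\\ c_\alpha & c_{\alpha-e_i+e_j}\end{smallmatrix}\right)$, and conclude $\det\le 0$ from the ``at most one positive eigenvalue'' condition via interlacing, using nonnegativity of the coefficients to dispose of the negative semidefinite case. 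I checked the coefficient bookkeeping ($\partial^\gamma$ of $\mathbf x^\beta/\beta!$ contributes $c_{\delta+\gamma}/\delta!$ to $\mathbf x^\delta$, so the diagonal Hessian entries pick up the factor $2\cdot\tfrac12$ correctly) and the case analysis (including $\alpha_i=0$ or $\alpha_j=0$, $i=j$, and $\partial^\gamma g\equiv 0$); all of it is sound. You are also right to flag that the two structural inputs --- closure of the Lorentzian class under coordinate partials and the degree-two Hessian-signature characterization --- are themselves theorems of \cite{bh2019} not restated in this paper, so in context the citation-as-black-box is the appropriate choice; your write-up is the correct thing to include only if one wants the result self-contained modulo those two lemmas.
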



\begin{proof}[Proof of Theorem \ref{thm:graphical}.] Follows readily from Corollary \ref{cor} and Proposition \ref{lem:log-concavity-of-coeffs}. \end{proof}


Theorems \ref{cor:bip} and \ref{thm:graphical}  imply the special case when the bipartite graph is planar:

\begin{corollary} \cite[Theorem 1.2]{hafner2023logconcavity} \label{cor:main} The coefficients of the Alexander polynomial $\Delta_L(-t)$ of a special alternating link $L$ form a  log-concave sequence with no internal zeros. In particular, they are  trapezoidal, proving Fox's conjecture in this case.
\end{corollary}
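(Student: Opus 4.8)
The plan is to obtain Corollary~\ref{cor:main} as a formal consequence of the two main results proved above: the identification of the Alexander polynomial with a polynomial $f_{M_{\vec G}}(t)$, and the log-concavity of the latter. Concretely, let $L=L_G$ be a special alternating link, where $G$ is a connected plane bipartite graph, and let $\vec G$ be its standard orientation (all edges directed from one color class of $G$ to the other). By Theorem~\ref{cor:bip} we have $\Delta_L(-t)\sim f_{M_{\vec G}}(t)$, where $\sim$ denotes equality up to a factor $\pm t^k$. Since a plane bipartite graph is in particular a connected bipartite graph, Theorem~\ref{thm:graphical} applies verbatim and shows that the coefficient sequence of $f_{M_{\vec G}}(t)$ is log-concave; hence so is the coefficient sequence of $\Delta_L(-t)$ after we normalize away the $\pm t^k$ ambiguity. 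As recalled in Section~\ref{sec:knot}, for an alternating link this normalization can be chosen so that all coefficients of $\Delta_L(-t)$ are nonnegative integers, and I would fix it this way from the outset.

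The one remaining point is to verify that the sequence has \emph{no internal zeros}, since it is log-concavity together with no internal zeros that yields unimodality and, more strongly, the trapezoidal property recalled in the Introduction. For this I would invoke Corollary~\ref{cor}: the homogenization of $f_{M_{\vec G}}(t)$ is a denormalized Lorentzian polynomial, and therefore its support is an M-convex set of lattice points. Concretely, by Theorem~\ref{thm} the coefficient of $t^c$ in $f_{M_{\vec G}}(t)$ counts the lattice points of the trimmed zonotope $Z_G^-$ on the hyperplane $x_1+\dots+x_m=c$, and $Z_G^-$ is an integral generalized permutahedron; hence any two of its lattice points are joined by a sequence of exchange steps $p\mapsto p+{\bf e}_i-{\bf e}_j$, each of which changes $x_1+\dots+x_m$ by at most $1$. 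By a discrete intermediate value argument, the set of exponents $c$ with a nonzero coefficient is then an interval of consecutive integers, so there are no internal zeros.

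Combining the two observations: the coefficients of $\Delta_L(-t)$ form a nonnegative, log-concave sequence with no internal zeros, and such a sequence is automatically trapezoidal, which is exactly Fox's conjecture for $L$. I do not expect a serious obstacle here — all the real content sits in Theorems~\ref{cor:bip} and~\ref{thm:graphical} — so the things to be careful about are the bookkeeping ones: matching the normalization conventions so that the combinatorial polynomial $f_{M_{\vec G}}(t)$ genuinely records the non-negatively normalized Alexander polynomial, and spelling out the no-internal-zeros step via M-convexity rather than letting it pass as a consequence of termwise log-concavity alone (which it is not).
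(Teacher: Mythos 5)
Your proposal matches the paper's proof, which simply cites Theorems~\ref{cor:bip} and~\ref{thm:graphical}. Your extra paragraph justifying the \emph{no internal zeros} claim via the M-convex support of the denormalized Lorentzian homogenization (equivalently, the integrality and convexity of $Z_G^-$ together with exchange steps $p\mapsto p+\mathbf{e}_i-\mathbf{e}_j$) is a correct and worthwhile elaboration of a point the paper leaves implicit, since nonnegativity plus log-concavity alone does not exclude internal zeros.
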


A natural open problem is whether $\Delta_L(-t)$, where the link $L$ is alternating but not necessarily special, may be obtained in the form $f_A(t)$ for some flat vector arrangement $A$, and whether the general case of Fox's conjecture can be settled with the help of such an interpretation.

\section{$f_{A}$ from totally positive matrices}
\label{sec:TP}

In this final section we prove Theorem \ref{thm:intro-totpos}, appearing as Corollary \ref{cor:C} below.  

\subsection{External semi-activity for matrices with positive 
maximal minors}
Let $A$ be a $d \times N$ matrix with all positive maximal minors.
The oriented matroid $M[A]$ is called the {\it alternating uniform
oriented matroid\/} of rank $d$ on $N$ elements. 
Bases of $M[A]$ are arbitrary $d$-element subsets $B$
of the ground set $[N]$.  Assume that $[N]$ is 
ordered linearly: $1<2<\cdots<N$.
Pick a generic vector $\rho\in\R^N$ such that 
$\rho_1 >> \rho_2 >> \cdots >> \rho_N > 0$ and orient the circuits 
of $M[A]$ as in Definition~\ref{def:ext}. These oriented circuits have 
the form $C= C^+\sqcup C^-$, where
$C=\{c_1<c_2<\ldots<c_{d+1}\}$ is an arbitrary $(d+1)$-element subset of $[N]$,
$C^+ = \{c_1,c_3,c_5,\dots,\}$, and $C^-=\{c_2,c_4,c_6,\dots,\}$.
The well-known fact that the signs in circuits of $M[A]$ alternate
follows from the identity for maximal minors of a $d\times (d+1)$ matrix:
$\sum_{i=1}^{d+1}(-1)^{i-1}{a_i} \Delta_{[d+1]\setminus \{i\}}=\vec{0}$,
where the $a_i$ are the column vectors of the matrix.

In this case, Definition~\ref{def:ext} of external semi-activity 
specializes as follows:
For a base $B=\{i_1<i_2<\ldots < i_d\}\subset [N]$, an 
element $j\in[N]\setminus B$ is externally  semi-active 
if $\#\{r \mid i_r < j\}$ is even.   Equivalently, 
the set $\Ext(B)$ of externally semi-active elements is
$$
\Ext(B) = \{j \mid j < i_1\} \cup \{j \mid i_2 < j < i_3\}
\cup \{j \mid i_4 < j < i_5\} \cup \cdots.
$$
We obtain the following expression for the external semi-activity
$\ext(B) := |\Ext(B)|$.

\begin{lemma} 
\label{lem:extB}
Let $A$ be a $d \times N$ matrix with all positive maximal minors.  Let
$B=\{i_1<i_2<\ldots < i_d\}$ be a base of the matroid  $M[A]$.  Let $i_0=0$ and
$i_{d+1}=N+1$. The external semi-activity of $B$ is 
$$
\ext(B)= (i_1 - i_0 - 1) + (i_3-i_2-1) + (i_5 - i_4 - 1)  + \cdots
= \sum_{k=0}^{2\lfloor d/2 \rfloor + 1}  (-1)^{k+1} i_k 
- \lfloor d/2 \rfloor -1. 
$$
\end{lemma}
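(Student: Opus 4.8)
The plan is to unwind the combinatorial description of $\Ext(B)$ that has just been derived and then recognize the resulting sum of gap-lengths as the stated telescoping expression. Since the excerpt has already established that
$$\Ext(B) = \{j \mid j < i_1\} \cup \{j \mid i_2 < j < i_3\} \cup \{j \mid i_4 < j < i_5\} \cup \cdots,$$
and these intervals of integers are pairwise disjoint, the first step is simply to count each interval. With the convention $i_0 = 0$, the set $\{j \mid j < i_1\} = \{1, 2, \dots, i_1 - 1\}$ has $i_1 - 1 = i_1 - i_0 - 1$ elements; for $k \geq 1$ the set $\{j \mid i_{2k} < j < i_{2k+1}\}$ has $i_{2k+1} - i_{2k} - 1$ elements (this is nonnegative, and equals $0$ exactly when $i_{2k+1} = i_{2k}+1$). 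Summing over all the relevant $k$ gives
$$\ext(B) = (i_1 - i_0 - 1) + (i_3 - i_2 - 1) + (i_5 - i_4 - 1) + \cdots,$$
which is the first displayed equality.

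The only genuine subtlety is bookkeeping the last interval, i.e.\ where the union of odd-indexed gaps terminates, and this is exactly what the conventions $i_0 = 0$ and $i_{d+1} = N+1$ are there to handle. I would split into the two parities of $d$. If $d$ is even, say $d = 2s$, then the last interval appearing in $\Ext(B)$ is $\{j \mid i_{2s} < j < i_{2s+1}\} = \{j \mid i_d < j \leq N\}$, which has $N - i_d = i_{d+1} - i_d - 1$ elements, so the sum runs over $k = 0, 1, \dots, s$, i.e.\ $\lfloor d/2 \rfloor + 1$ terms, and the last index used is $i_{2s+1} = i_{d+1} = 2\lfloor d/2\rfloor + 1$. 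If $d$ is odd, say $d = 2s+1$, then the last odd-indexed vertex is $i_{2s+1} = i_d$, and the last interval is $\{j \mid i_{2s} < j < i_{2s+1}\}$, contributing $i_d - i_{2s} - 1$; here the sum again runs over $k = 0, \dots, s = \lfloor d/2\rfloor$, using indices up to $2\lfloor d/2\rfloor + 1 = d$. In both cases the number of "$-1$" terms is $\lfloor d/2 \rfloor + 1$, which explains the $-\lfloor d/2 \rfloor - 1$ in the final formula.

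For the second displayed equality, I would simply collect the $\pm i_k$ terms from the telescoping-style sum $(i_1 - i_0) + (i_3 - i_2) + (i_5 - i_4) + \cdots$: each $i_k$ with $k$ odd appears with coefficient $+1$ and each $i_k$ with $k$ even appears with coefficient $-1$, and by the parity analysis above the indices $k$ that actually occur are precisely $k = 0, 1, 2, \dots, 2\lfloor d/2\rfloor + 1$. This is exactly $\sum_{k=0}^{2\lfloor d/2\rfloor + 1} (-1)^{k+1} i_k$, and subtracting the $\lfloor d/2\rfloor + 1$ copies of $-1$ yields the claimed closed form. I do not anticipate a real obstacle here; the whole lemma is essentially a careful rewriting, and the one place to be attentive is the case split on the parity of $d$ together with checking that the endpoint conventions $i_0 = 0$, $i_{d+1} = N+1$ make the first and last terms come out right.
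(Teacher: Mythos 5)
Your proposal is correct and takes essentially the same approach the paper has in mind: the paper treats the lemma as an immediate consequence of the displayed description of $\Ext(B)$ as a disjoint union of intervals, and you simply carry out the interval-counting, using the case split on the parity of $d$ to verify that the conventions $i_0 = 0$ and $i_{d+1} = N+1$ make the upper summation limit $2\lfloor d/2\rfloor + 1$ come out right in both cases. (One small slip: in the even case you write ``$i_{2s+1} = i_{d+1} = 2\lfloor d/2\rfloor + 1$''; you mean the \emph{subscript} $2s+1$ equals $2\lfloor d/2\rfloor + 1$, not the value $i_{d+1}=N+1$.)
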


\subsection{Flat max-positive matrices} 

Recall that, in order to define the polynomial $f_{A}$, 
we need to assume that $A$ is a flat matrix.
Let us
consider a special class of flat matrices $A$ that have all positive 
maximal minors.

\begin{definition}
\label{def:CA}
We say that a $d\times N$ matrix $A=(a_{ij})$ is 
{\it flat max-positive\/} if there exists a $(d-1)\times N$ 
matrix $C=(c_{ij})$ with all positive maximal minors such that $A$ is 
given by $a_{dj}=1$, for all $j \in [N]$,
and $a_{ij}=\sum_{N\geq j'\geq j}c_{ij'}$ for all $i \in [d-1], j \in [N]$. 
\end{definition}
 
Clearly, such a  matrix $A$ is flat because its last row consists of all $1$'s. It is also `max-positive' by the following lemma:

\begin{lemma} 
\label{lem:A-C}
Let $A$ be the 
matrix constructed from $C$ 
as in Definition~\ref{def:CA}.
The maximal minors of $A$ are expressed in terms of the maximal minors 
of $C$ as 
\begin{equation}\label{eq:minors}\Delta_{i_1, \ldots, i_d}(A)=\sum_{j_1,
\ldots, j_{d-1} \, \mid \, i_1\leq j_1<i_2\leq j_2<i_3\leq\cdots
\leq j_{d-1} < i_d} {\Delta}_{j_1, \ldots,
j_{d-1}}(C). 
\end{equation}
In particular, all maximal minors of $A$ are positive.
\end{lemma}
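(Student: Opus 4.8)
The plan is to recognize the matrix $A$ as obtained from $C$ by first appending an all-ones row and then performing a partial-summation (telescoping) operation on the columns, and to track what these operations do to maximal minors. Concretely, write $\widetilde C$ for the $d\times N$ matrix whose last row is all $1$'s and whose top $d-1$ rows are the rows of $C$; then $A = \widetilde C \cdot U$, where $U=(u_{jk})$ is the $N\times N$ upper-triangular matrix with $u_{jk}=1$ if $j\le k$ and $0$ otherwise — this is exactly the prescription $a_{ij}=\sum_{j'\ge j}c_{ij'}$ for the top rows, and it sends the all-ones row to the vector $(1,2,\dots,N)$, wait — so actually one must be slightly careful: the last row of $A$ is stipulated to be all $1$'s, not the partial sums of the last row of $\widetilde C$. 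So the cleaner bookkeeping is: the columns $a_j$ of $A$ satisfy $a_j - a_{j+1} = c_j$ (the $j$-th column of $C$, viewed in $\R^{d-1}$) in the top $d-1$ coordinates, while the bottom coordinate of each $a_j$ is $1$. I would phrase the relation directly at the level of the column vectors rather than as a single matrix product, to avoid this mismatch in the last row.

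The key computational step is then a Cauchy–Binet / multilinearity argument. Fix indices $i_1<\dots<i_d$. In the determinant $\Delta_{i_1,\dots,i_d}(A)=\det[a_{i_1}\mid\cdots\mid a_{i_d}]$, use column operations that do not change the determinant: replace $a_{i_r}$ by $a_{i_r}-a_{i_r+1}$ is not directly available unless $i_r+1$ is among the chosen columns, so instead I would expand each chosen column $a_{i_r}$ as a telescoping sum $a_{i_r}=\sum_{j\ge i_r} c_j + (\text{bottom unit vector contribution})$, more precisely $a_{i_r} = e_d + \sum_{j=i_r}^{N} \hat c_j$ where $\hat c_j\in\R^d$ has the $j$-th column of $C$ in its first $d-1$ coordinates and $0$ in the last. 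Now expand the determinant multilinearly in the $d$ columns. Any term that picks $e_d$ from two or more columns vanishes; any term that picks $\hat c_j$ from every column also vanishes because $\hat c_j$ has zero last coordinate, so the resulting $d\times d$ matrix has an all-zero last row. Hence exactly one column must contribute $e_d$ and the remaining $d-1$ columns contribute vectors $\hat c_{j_1},\dots,\hat c_{j_{d-1}}$ with $j_t\ge i_{(\cdot)}$. Laplace-expanding along the last row of such a term reduces it to a $(d-1)\times(d-1)$ minor of $C$, namely $\Delta_{j_1,\dots,j_{d-1}}(C)$ (up to a sign to be verified), and one checks the surviving index constraints are precisely $i_1\le j_1<i_2\le j_2<\cdots\le j_{d-1}<i_d$: the strict inequalities $j_t<i_{t+1}$ come from the requirement that distinct columns contribute distinct $\hat c$'s with indices in the allowed ranges, and from the ordering forced by which column supplies $e_d$.

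I expect the sign bookkeeping to be the main obstacle — one must confirm that every surviving term enters with coefficient $+1$ (not merely $\pm1$), so that positivity of all $\Delta_{j_1,\dots,j_{d-1}}(C)$ indeed forces positivity of $\Delta_{i_1,\dots,i_d}(A)$ and the stated identity holds on the nose rather than up to signs. The natural way to pin this down is to argue by induction on $d$: assume the column that supplies $e_d$ is the one of smallest index among the chosen indices consistent with the range constraints, track the sign produced by moving that column to the last position and Laplace-expanding, and match it against the inductive formula for the $(d-1)\times (d-1)$ block. Once all signs are seen to be $+1$, the ``in particular'' clause is immediate: $A$ inherits positivity of all maximal minors from $C$, since \eqref{eq:minors} expresses each maximal minor of $A$ as a nonempty sum (nonempty because one can always choose $j_t=i_t$) of positive terms. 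This also confirms $A$ is flat (its last row is all $1$'s) and hence that $f_A$ is defined, tying back to Definition \ref{def:CA}.
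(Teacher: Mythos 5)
Your opening paragraph actually discovers the paper's own route and then walks away from it prematurely. You write $A=\widetilde C\cdot U$ with $\widetilde C$ having a row of all $1$'s, notice the last row comes out as $(1,2,\dots,N)$ (or $(N,\dots,2,1)$ for the correct lower-triangular cumulative-sum matrix $J$ with $J_{j'j}=[j'\geq j]$), and conclude that the one-matrix-product formulation is off. The fix is simply to take the last row of $\widetilde C$ to be $(0,\dots,0,1)$ rather than $(1,\dots,1)$: then $\widetilde C\cdot J$ has bottom row all $1$'s and top $d-1$ rows the rightward cumulative sums of $C$, exactly recovering $A$. This is precisely what the paper does, after which Cauchy--Binet reduces the lemma to computing the maximal minors of $J$ (an exercise: the $(S,I)$-minor of $J$ is $1$ when $S$ and $I$ interleave appropriately, and $0$ otherwise), and the restriction $N\in S$ is forced because $\det(\widetilde C|_S)=0$ whenever $N\notin S$.

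The alternative route you then propose --- a direct multilinear expansion of $\det[a_{i_1}\mid\cdots\mid a_{i_d}]$ with each $a_{i_r}=e_d+\sum_{j\geq i_r}\hat c_j$ --- is in principle equivalent to Cauchy--Binet, but your description of which terms ``survive'' is not correct, and this is a real gap rather than sign bookkeeping to be cleaned up. After discarding the terms where $e_d$ appears zero or $\geq 2$ times and the terms with repeated $\hat c_j$'s, there remain plenty of nonzero terms whose index pattern does \emph{not} satisfy $i_1\leq j_1<i_2\leq j_2<\cdots\leq j_{d-1}<i_d$; they enter with both signs and cancel in pairs, and only after this cancellation does the interleaving sum emerge. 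A concrete instance: take $d=2$, $N=3$, $C=(c_1,c_2,c_3)$, and look at $\Delta_{1,3}(A)=\det[a_1\mid a_3]$. The nonzero contributions are $\det[\hat c_1\mid e_2]=c_1$, $\det[\hat c_2\mid e_2]=c_2$, $\det[\hat c_3\mid e_2]=c_3$, and $\det[e_2\mid\hat c_3]=-c_3$; the last two cancel to give $c_1+c_2$. The term $\det[e_2\mid\hat c_3]$ violates the claimed interleaving ($j_1=3\not<i_2=3$) and enters with sign $-1$, so it is not true that ``every surviving term enters with coefficient $+1$.'' Organizing and proving this cancellation for general $d$ is exactly the content of the statement that the $(S,I)$-minor of $J$ is $0$ or $1$ according to the interleaving condition --- i.e., you would end up re-proving, in expanded form, the very determinant computation that the Cauchy--Binet factorization isolates cleanly. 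Returning to the matrix-product formulation with the corrected $\widetilde C$ is the shorter path.
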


Note that the maximal minors $\Delta_{i_1,\dots,i_{d}}(A)$
do not depend on the last column of the matrix $C$.

\begin{proof}
Let $\tilde C$ be the $d\times N$ matrix obtained from $C$ by adding the 
$d$th row $(0,\dots,0,1)$.  Then $A = \tilde C \cdot J$, where
$J$ is the lower-triangular $N\times N$ matrix with all entries $1$
on and below the main diagonal.  
The claim of the lemma follows from the Cauchy--Binet formula.
We leave it as an exercise for the reader to calculate all minors of $J$.
\end{proof}

\begin{definition}
A matrix $A$ is called {\it totally positive\/} if all its minors
(of all sizes) are positive.
\end{definition}

\begin{lemma} \label{lem:TP} 
Any totally positive $d\times N$ matrix with the last row $(1,\dots,1)$
is flat max-positive.
More precisely, the totally positive $d\times N$ matrices with the last row
$(1,\dots,1)$ are exactly the matrices $A$ as in 
Definition~\ref{def:CA}, 
where $C$ is an arbitrary totally positive $(d-1)\times N$ matrix.
\end{lemma}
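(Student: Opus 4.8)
The plan is to establish the equivalence that the lemma really amounts to: for a $d\times N$ matrix $A$ with last row $(1,\dots,1)$, $A$ is totally positive if and only if the associated $(d-1)\times N$ matrix $C$ is totally positive. The starting point is the factorization recorded in the proof of Lemma~\ref{lem:A-C}: $A=\tilde C\,J$, where $\tilde C$ is $C$ augmented by the row $(0,\dots,0,1)$ and $J$ is the invertible lower triangular $N\times N$ matrix with all entries equal to $1$ on and below the diagonal. Since $J$ is invertible, with $J^{-1}$ the lower bidiagonal matrix carrying $1$'s on the diagonal and $-1$'s just below it, the relation reads $\tilde C=A\,J^{-1}$, i.e.\ $c_{ij}=a_{ij}-a_{i,j+1}$ for $j<N$ and $c_{iN}=a_{iN}$. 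This exhibits $C\mapsto A$ as a bijection between arbitrary $(d-1)\times N$ matrices and $d\times N$ matrices with last row $(1,\dots,1)$, and shows that the first sentence of the lemma will follow from the ``if $C$ is TP then $A$ is TP'' half of the equivalence together with Definition~\ref{def:CA}.

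For that half, I would apply the Cauchy--Binet formula to $A=\tilde C\,J$ and use that every minor of $J$ is $0$ or $1$: for $L=\{l_1<\dots<l_r\}$ and $K=\{k_1<\dots<k_r\}$ one has $\Delta_{L,K}(J)=1$ exactly when $k_a\le l_a<k_{a+1}$ for all $a$ (setting $k_{r+1}=N+1$), and $\Delta_{L,K}(J)=0$ otherwise; this is a short combinatorial check on the staircase matrix $J$. Hence $\Delta_{I,K}(A)=\sum_{L}\Delta_{I,L}(\tilde C)$ over the nonempty set of $L$ satisfying that condition (for instance $L=K$ always works). If $d\notin I$ each summand is a genuine $r\times r$ minor of the totally positive matrix $C$, hence positive. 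If $d\in I$, I would expand $\Delta_{I,L}(\tilde C)$ along the row $(0,\dots,0,1)$: it vanishes unless $l_r=N$, and otherwise equals a minor of $C$ with one fewer row and column, again positive; since some admissible $L$ has $l_r=N$, the sum is positive. Thus all minors of $A$ are positive, and its last row is $(1,\dots,1)$ by construction.

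For the converse, the plan is to invoke the classical criterion of Fekete: a matrix is totally positive as soon as all of its \emph{solid} minors---those using a block of consecutive rows and a block of consecutive columns---are positive. A solid minor of $C$ uses a block of consecutive columns $[p,q]$. Performing the column operations that replace each column $c_j$ of this block by the partial sum $c_j+c_{j+1}+\dots+c_q$, and using $c_j=a_j-a_{j+1}$ (with the convention $a_{N+1}=0$, so that also $c_N=a_N-a_{N+1}$), the block telescopes: one obtains the columns $a_p-a_{q+1},\dots,a_q-a_{q+1}$ if $q<N$, and the columns $a_p,\dots,a_N$ if $q=N$. In the case $q=N$ the solid minor of $C$ has become the minor of $A$ on the same rows and on columns $[p,N]$. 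In the case $q<N$, the standard identity expressing $\det(v_1-v_0,\dots,v_r-v_0)$ as the $(r+1)\times(r+1)$ determinant obtained by prepending a row of $1$'s and prepending the column $v_0$, followed by a cyclic shift of the columns and then moving the all-$1$'s row (which is the $d$-th row of $A$) into last position, identifies this minor with $\Delta_{I\cup\{d\},[p,q+1]}(A)$, the two sign changes cancelling. Either way, every solid minor of $C$ equals a minor of the totally positive matrix $A$ and is therefore positive, so Fekete's criterion gives that $C$ is totally positive.

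I expect the delicate part to be the converse---specifically the sign bookkeeping in the telescoping identity that rewrites a solid minor of $C$ as a single minor of $A$, and the clean use of the all-$1$'s row there, together with the correct invocation of Fekete's criterion to reduce to solid minors. Lemma~\ref{lem:A-C}, which computes only the \emph{maximal} minors of $A$, is the special case $r=d$ of these manipulations and is a useful consistency check. One could instead try to treat an arbitrary minor of $C$ directly via Cauchy--Binet applied to $\tilde C=A\,J^{-1}$, using that the minors of $J^{-1}$ are $0$ or $\pm1$ with a checkerboard sign pattern; but that route requires showing an \emph{alternating} sum of minors of $A$ is positive, which seems harder than the reduction to solid minors above.
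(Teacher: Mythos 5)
Your proof is correct, and it takes a genuinely different route from the paper's. The paper proves Lemma~\ref{lem:TP} by invoking the planar network parametrization of totally positive matrices: it realizes $A$ as the path matrix of a weighted $d\times N$ grid and observes that the last row being $(1,\dots,1)$ forces the bottom row of horizontal weights to be $1$, so that deleting that row yields a network whose path matrix is a totally positive $C$ related to $A$ exactly as in Definition~\ref{def:CA}. You instead work purely with determinants. For the ``$C$ TP $\Rightarrow A$ TP'' direction you push the factorization $A=\tilde C J$ (already present in the proof of Lemma~\ref{lem:A-C}, but used there only for \emph{maximal} minors) through Cauchy--Binet for minors of every size, after computing all minors of the staircase matrix $J$; your characterization of when $\Delta_{L,K}(J)=1$, and the treatment of the two cases $d\notin I$ and $d\in I$ (expanding $\Delta_{I,L}(\tilde C)$ along the row $(0,\dots,0,1)$, which forces $l_r=N$), are all correct. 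For the converse you reduce to solid minors via Fekete's criterion and then telescope the columns $c_j=a_j-a_{j+1}$, absorbing the all-$1$'s row via the standard bordered-determinant identity; the sign bookkeeping is right, since the cyclic shifts on rows and on columns each contribute $(-1)^{q-p+1}$ and cancel. What the paper's approach buys is brevity and a conceptually clean explanation of \emph{why} the correspondence is a bijection (it is transparent at the level of edge weights); what your approach buys is that it is self-contained modulo Fekete's criterion, requires no parametrization theory, and makes the $C\leftrightarrow A$ dictionary explicit at the level of arbitrary (not just maximal or solid) minors. Both are valid proofs.
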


\begin{proof}
We will use the network parametrization of 
the space of totally positive matrices, see \cite{grass}.
Let $(G, \{x_e\})$ be a directed network 
(i.e., a directed graph $G$ with edge weights $x_e$)
such that $G$ is the rectangular $d\times N$ grid graph with horizontal
edges directed from right to left and vertical edges directed down,
with $d$ sources $1,\dots,d$ on the right and $N$ sinks $1',2',\dots,N'$
on the bottom, carrying positive edge weights $x_e>0$ for all horizontal edges
$e$,
and edge weights equal to $1$ for all vertical edges.
For such a network $(G,\{x_e\})$, construct a $d\times N$-matrix $A=(a_{ij})$,
where $a_{ij} = \sum_{P:i\to j'} \prod_{e\in P} x_e$, the weighted 
sum over all directed paths from source $i$ to sink $j'$.
Then the matrix $A$ is totally positive, and any totally positive matrix
has this form.

If the last row of $A$ is $(1,\dots,1)$, then the edge weights $x_e$ for 
all horizontal edges $e$ in the last row of the network are equal to $1$.
Let $C$ be the (totally positive) $(d-1)\times N$ matrix
given by the network obtained from $(G,\{x_e\})$ by removing the last row
of horizontal edges.
It is easy to see that the $d\times N$ matrix $A$ is expressed in terms of 
the $(d-1)\times N$ matrix $C$ as in Definition~\ref{def:CA}.
\end{proof}

\subsection{Box-positivity and trapezoidal
property of $f_{A}$ for flat totally positive matrices}

\begin{definition}  \label{def:box} A polynomial $f(q)$ of degree $D$ is called 
{\it $d$-box-positive\/} if $f(q)$ is a positive linear combination 
of products of $q$-numbers $[m_1]_q\, [m_2]_q\,\cdots [m_d]_q$
with $m_1+\cdots+m_d = D+d$ and $m_1,\dots,m_d\in\Z_{>0}$.
Recall that $q$-numbers are defined as 
$[m]_q := 1 + q + q^2 + \cdots + q^{m-1} = (1-q^m)/(1-q)$.
\end{definition}

\begin{lemma} \label{lem:box}
A $d$-box-positive polynomial $f(q)$ is trapezoidal and palindromic.
\end{lemma}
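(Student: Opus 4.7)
The plan is to decompose the lemma into three pieces: palindromicity of $f$, trapezoidality of each individual product $[m_1]_q\cdots[m_d]_q$ appearing in the $d$-box-positive decomposition, and closure of the palindromic trapezoidal property under nonnegative linear combinations of same-degree polynomials.

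The degree constraint $\sum_{i=1}^d m_i = D+d$ forces $\deg\prod_{i=1}^d[m_i]_q = \sum(m_i - 1) = D$, so every product appearing in the decomposition has degree exactly $D$. Since $q^{m-1}[m]_{q^{-1}} = [m]_q$ for each $m$, each factor $[m]_q$ is palindromic of degree $m-1$; hence each product is palindromic of degree $D$, and so is the nonnegative combination $f(q)$.

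For the trapezoidal property of each individual product, each factor $[m]_q$ has coefficient sequence $(1,1,\ldots,1)$, which is trivially log-concave with no internal zeros. By a classical theorem attributed to Hoggar, the convolution of two nonnegative log-concave sequences with no internal zeros is again log-concave with no internal zeros; iterating $d-1$ times shows the coefficient sequence of $\prod_i[m_i]_q$ is log-concave with no internal zeros. Combined with palindromicity, the implication ``log-concave with no internal zeros $\Rightarrow$ trapezoidal'' recalled in the Introduction yields that each $\prod_i[m_i]_q$ is trapezoidal.

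For the closure step, I would observe that a palindromic polynomial of degree $D$ is trapezoidal if and only if its first-half coefficient sequence $(a_0,\ldots,a_{\lfloor D/2\rfloor})$ admits a transition index $k$ such that it strictly increases on $[0,k]$ and is constant on $[k,\lfloor D/2\rfloor]$. If two palindromic trapezoidal polynomials have transition indices $k_1$ and $k_2$, their sum $c$ has transition index $\max(k_1,k_2)$: at each step $i\leq \max(k_1,k_2)$ at least one of the two summands strictly increases so $c$ does as well, while past $\max(k_1,k_2)$ both summands plateau so $c$ plateaus. Induction extends this to arbitrary nonnegative combinations, completing the proof. The principal technical input is Hoggar's convolution-closure theorem for log-concavity with no internal zeros; the remaining steps are elementary once trapezoidality is reformulated as monotonicity-with-a-single-plateau in the first half.
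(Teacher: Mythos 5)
Your proposal is correct and follows essentially the same route as the paper: the paper also notes that each product $[m_1]_q\cdots[m_d]_q$ is palindromic and log-concave (hence trapezoidal) and that positive combinations of same-degree palindromic trapezoidal polynomials remain palindromic and trapezoidal. You merely supply the details the paper leaves implicit (Hoggar's convolution theorem for the log-concavity of the products, and the explicit ``transition index'' argument for closure under sums).
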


\begin{proof}
Note that any product of $q$-numbers $[m_1]_q\, [m_2]_q\,\cdots [m_d]_q$
is palindromic and log-concave, and thus trapezoidal.
This implies the claim because positive linear combinations 
of trapezoidal palindromic polynomials of the same degree are
also trapezoidal and palindromic.
\end{proof}

Note, however, that a $d$-box-positive polynomial may fail to be log-concave 
because log-concavity is not always preserved under addition of polynomials.

\begin{theorem}  \label{thm:A} 
For a totally positive $d\times N$ matrix $A$ with last row $(1,\dots,1)$
or, more generally, for a flat max-positive matrix $A$, 
the polynomial $f_A(q)$ is $d$-box positive.
Explicitly, we have 
\begin{equation} \label{eq:thm} f_A(q)
=\sum_{\{j_1< \cdots < j_{d-1}\}\subset [N-1]} 
{\Delta}_{j_1,\dots,j_{d-1}}(C) \, [j_1]_q [j_2 - j_1]_q [j_3 - j_2]_q \cdots [N - j_{d-1}]_q.
\end{equation}
\end{theorem}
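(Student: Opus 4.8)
The plan is to derive the explicit formula \eqref{eq:thm} straight from the definition of $f_A$, after which $d$-box-positivity falls out by inspection. First I would record that, by Theorem~\ref{thm:f_A}, $f_A$ does not depend on the generic vector $\rho$, so I may use the order vector $\rho_1 \gg \rho_2 \gg \cdots \gg \rho_N > 0$, for which the external semi-activity of a base $B = \{i_1 < \cdots < i_d\}$ is the one computed in Lemma~\ref{lem:extB}. Since $A$ is flat max-positive, Lemma~\ref{lem:A-C} guarantees that all maximal minors of $A$ are positive, so $\Vol(\Pi_B) = \Delta_{i_1,\dots,i_d}(A)$; inserting the expansion \eqref{eq:minors} and interchanging the two summations, the terms become indexed by $(d-1)$-subsets $\{j_1 < \cdots < j_{d-1}\} \subseteq [N-1]$ (the condition $j_{d-1} < i_d \le N$ forces $j_{d-1}\le N-1$, and the lower bound $j_1\ge i_1\ge1$ is automatic), and for a fixed such subset the inner sum ranges over all $(i_1,\dots,i_d)$ with $i_r \in [\,j_{r-1}+1,\, j_r\,]$ for $r = 1,\dots,d$, where we set $j_0 := 0$ and $j_d := N$. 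The statement then reduces to the identity
\[
\sum_{\,i_r \in [\,j_{r-1}+1,\; j_r\,],\ r = 1,\dots,d} q^{\ext(\{i_1,\dots,i_d\})} \;=\; [j_1]_q\,[j_2 - j_1]_q \cdots [N - j_{d-1}]_q .
\]

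This identity is the crux. I would prove it using the block structure of the formula in Lemma~\ref{lem:extB}: with $i_0 = 0$ and $i_{d+1} = N + 1$, that formula writes $\ext(B)$ as the sum $(i_1 - i_0 - 1) + (i_3 - i_2 - 1) + (i_5 - i_4 - 1) + \cdots$, and since each summand involves at most two of the (now independent) variables $i_1,\dots,i_d$, these split into consecutive blocks --- the singleton $\{i_1\}$, the pairs $\{i_2, i_3\},\{i_4, i_5\},\dots$, and a trailing singleton $\{i_d\}$ when $d$ is even (carrying $i_{d+1} - i_d - 1 = N - i_d$). Consequently the left-hand sum factors as a product of block sums, and a short computation with geometric series --- using $[m]_{q^{-1}} = q^{-(m-1)}[m]_q$ to absorb the resulting shifts --- evaluates the singleton $\{i_1\}$ to $[j_1]_q$, the trailing singleton (when present) to $[N - j_{d-1}]_q$, and each pair block $\{i_{2k}, i_{2k+1}\}$ to $[j_{2k} - j_{2k-1}]_q\,[j_{2k+1} - j_{2k}]_q$. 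In both parity cases the arguments appearing across all blocks are exactly the consecutive differences $j_1 - j_0,\, j_2 - j_1,\,\dots,\, j_d - j_{d-1}$, so the product is the claimed right-hand side.

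Feeding the identity back into the interchanged double sum gives \eqref{eq:thm}. For $d$-box-positivity (Definition~\ref{def:box}) one only needs to observe: each coefficient $\Delta_{j_1,\dots,j_{d-1}}(C)$ is positive (part of Definition~\ref{def:CA}, and in the totally positive case a consequence of Lemma~\ref{lem:TP}); each product $[j_1]_q [j_2 - j_1]_q \cdots [N - j_{d-1}]_q$ is a product of exactly $d$ $q$-numbers whose arguments sum to $j_1 + (j_2 - j_1) + \cdots + (N - j_{d-1}) = N$; and each such product has degree $(j_1 - 1) + (j_2 - j_1 - 1) + \cdots + (N - j_{d-1} - 1) = N - d$, so $\deg f_A = N - d$ and the arguments sum to $\deg f_A + d$, exactly as Definition~\ref{def:box} requires (the trapezoidal, palindromic conclusion then follows from Lemma~\ref{lem:box}). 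I expect the only genuinely delicate step to be the identity in the second paragraph: the block bookkeeping must handle the even-$d$ tail term $N - i_d$ separately, and one must verify that the $q$-number arguments telescope to the consecutive differences $j_r - j_{r-1}$ in both parities.
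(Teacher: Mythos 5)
Your proposal is correct and follows essentially the same route as the paper's proof: invoke Lemmas~\ref{lem:extB} and~\ref{lem:A-C}, interchange the sum over bases $B=\{i_1<\cdots<i_d\}$ with the sum over $(d-1)$-subsets $\{j_1<\cdots<j_{d-1}\}$, and then factor the inner sum over the now-independent indices $i_r\in[j_{r-1}+1,j_r]$ (with $j_0=0$, $j_d=N$) into geometric series that evaluate to $[j_1]_q[j_2-j_1]_q\cdots[N-j_{d-1}]_q$. The paper simply writes out these geometric-series factors directly in the display (e.g.\ $\sum_{i_2\in[j_1+1,j_2]}q^{j_2-i_2}$ and $\sum_{i_3\in[j_2+1,j_3]}q^{i_3-j_2-1}$), which is the same computation you organize via the ``block'' bookkeeping with pairs $\{i_{2k},i_{2k+1}\}$ and a possible trailing singleton; your description is a bit more explicit about the parity cases but is mathematically identical.
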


\begin{proof} 
Using Lemmas~\ref{lem:extB} and~\ref{lem:A-C},
we get
$f_A(q) = \sum_{B=\{i_1<\cdots <i_d\}\subset [N]}
\Delta_{i_1,\dots,i_d}(A) \, q^{\ext(B)} = $
$$
\begin{array}{l} 
\displaystyle
=\sum_{i_1\leq j_1 < i_2 \leq j_2 <\cdots\leq j_{d-1}\leq i_d}
\Delta_{j_1,\dots,j_{d-1}}(C)\, 
q^{(i_1-1) + (i_3 - i_2-1) + (i_5-i_4-1) + \cdots}
\\[.1in]
\displaystyle
=\sum_{\{j_1<\dots<j_{d-1}\}\subset[N-1]} \Delta_{j_1,\dots,j_{d-1}}(C)\,
\left( \sum_{i_1\in[1,j_1]} q^{i_1-1}\right)
\left( \sum_{i_2\in[j_1+1,j_2]} q^{j_2-i_2}\right)
\left(\sum_{i_3\in[j_2+1,j_3]} q^{i_3 - j_2 -1}\right)\cdots,
\end{array}
$$
which gives the needed expression for $f_A(q)$.
\end{proof}

\begin{corollary}  \label{cor:C} For a totally positive matrix $A$ with last
row all $1$'s,  the polynomial $f_A(q)$ is trapezoidal. \end{corollary}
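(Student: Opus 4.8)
The plan is to obtain the corollary as an immediate consequence of the machinery assembled in this section, with essentially no new work. First I would check that $f_A(q)$ is even defined, i.e.\ that $A$ is flat and full dimensional: a totally positive $d\times N$ matrix has rank $d$, so the column arrangement is full dimensional, and the last row being $(1,\dots,1)$ means the coordinate functional $h$ that reads off the $d$th coordinate satisfies $h(a_j)=1$ for every column, so $A$ is flat. (Alternatively, one may first invoke Lemma~\ref{lem:TP} to record that such an $A$ is flat max-positive, so that the hypotheses of Theorem~\ref{thm:A} literally match.)

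With $f_A(q)$ in hand, the proof is a two-line chain. By Theorem~\ref{thm:A}, $f_A(q)$ is $d$-box-positive: the explicit formula \eqref{eq:thm} writes it as $\sum_{\{j_1<\cdots<j_{d-1}\}\subset[N-1]} \Delta_{j_1,\dots,j_{d-1}}(C)\,[j_1]_q\,[j_2-j_1]_q\cdots[N-j_{d-1}]_q$, and the coefficients $\Delta_{j_1,\dots,j_{d-1}}(C)$ are positive because the matrix $C$ of Definition~\ref{def:CA} (which by Lemma~\ref{lem:TP} may be taken totally positive) has all positive maximal minors, while the exponents $j_1+(j_2-j_1)+\cdots+(N-j_{d-1})=N$ are constant, matching Definition~\ref{def:box}. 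Then Lemma~\ref{lem:box} states that every $d$-box-positive polynomial is trapezoidal, so $f_A(q)$ is trapezoidal, completing the argument.

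I do not expect any genuine obstacle here: all of the substance has already been carried out upstream — in Theorem~\ref{thm:A} (the combinatorial evaluation of $\ext(B)$ via Lemma~\ref{lem:extB} together with the Cauchy--Binet computation of the maximal minors of $A$ in Lemma~\ref{lem:A-C}), and in Lemma~\ref{lem:box} (each product of $q$-numbers is palindromic and log-concave, hence trapezoidal, and a nonnegative linear combination of trapezoidal palindromic polynomials of a common degree is again trapezoidal and palindromic). The corollary is simply the specialization of Theorem~\ref{thm:A} followed by Lemma~\ref{lem:box}.
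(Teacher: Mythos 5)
Your proposal is correct and is exactly the paper's intended (implicit) argument: Lemma~\ref{lem:TP} places $A$ in the flat max-positive class, Theorem~\ref{thm:A} then yields $d$-box-positivity of $f_A(q)$, and Lemma~\ref{lem:box} converts that to the trapezoidal property. The extra sanity checks you include (rank $d$ for full-dimensionality, the last-row functional witnessing flatness, and the degree bookkeeping $j_1+(j_2-j_1)+\cdots+(N-j_{d-1})=N=D+d$) are accurate and harmless.
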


\begin{example} For $d=2$, a totally positive matrix with a last row of all $1$'s is of the form $A=\begin{pmatrix} a_1 & a_2 & \cdots & a_N \\
1 & 1 & \cdots & 1\\
\end{pmatrix} $, where $a_1>a_2>\cdots>a_N>0$. The corresponding $(d-1)\times N$ matrix is the row vector  
$C=(a_1-a_2,\dots,a_{N-1}-a_N, a_N)$. 
We obtain
$$
f_A(q)=\sum_{1\leq j\leq N-1}  c_j \, [j]_q \, [N-j]_q
$$ 
for positive constants $c_j = a_j - a_{j+1}$, $j\in[N-1]$.
Note that 
$$
[j]_q [N-j]_q=1+2t+\cdots+(j-1)t^{j-2}+j t^{j-1}+ j t^{j}+\cdots+ j t^{N-2-(j-1)}+(j-1)t^{N-2-(j-2)}+\cdots+1t^{N-2}.
$$
Thus the sequence of coefficients of $f_A(q)
= b_0 + b_1 q + \dots + b_{N-2} q^{N-2}$,
for a totally positive $2\times N$-matrix $A$ with last row all $1$'s, 
is an arbitrary positive {\it concave\/} palindromic sequence:
(1) $b_{i} > (b_{i-1}+b_{i+1})/2$ for all $i=1,\dots,N-3$; and
(2) $b_i = b_{N-2-i}>0$ for all $i=0,\dots,N-2$.
\end{example}
 
\bibliographystyle{alpha}
\bibliography{Arxiv-version-04-29-no-comments}

\end{document}